\definecolor{red}{rgb}{1,0,0}
\newtheorem{thm}[subsection]{Theorem}
\newtheorem{defn}[subsection]{Definition}
\newtheorem{prop}[subsection]{Proposition}
\newtheorem{cor}[subsection]{Corollary}
\newtheorem{lemma}[subsection]{Lemma}
\newtheorem{ques}[subsection]{Problem}
\theoremstyle{definition}  
\newtheorem{remark}[subsection]{Remark}
\newcommand{\dfn}{\textbf} 
\newcommand{\mdfn}[1]{\dfn{\mathversion{bold}#1}} 
\newcommand{\Smash}             {\wedge}
\newcommand{\Wedge}             {\vee}
\newcommand{\tens}              {\otimes}               
\newcommand{\iso}               {\cong}
\newcommand{\cat}{\EuScript}    
\newcommand{\cC}{{\cat C}}
\newcommand{\cD}{{\cat D}}
\newcommand{\field}[1]  {\mathbb #1} 
\newcommand{\A}         {\field A}
\newcommand{\Z}         {\field Z}
\newcommand{\C}         {\field C}
\newcommand{\M}         {\field M}
\DeclareMathOperator*{\hocolim}{hocolim}
\DeclareMathOperator{\Spec}{Spec}
\DeclareMathOperator{\Hom}{Hom}
\DeclareMathOperator{\Tot}{Tot}
\DeclareMathOperator{\Ext}{Ext}
\newcommand{\ra}{\rightarrow}                   
\newcommand{\lra}{\longrightarrow}              
\newcommand{\la}{\leftarrow}                    
\newcommand{\llra}[1]{\stackrel{#1}{\lra}}      
\newcommand{\blank}{-}                          
\newcommand{\he}{\simeq}
\newcommand{\rea}[1]{|{#1}|}             
\newcommand{\map}{\rightarrow}
\newcommand{\ceck}[1]{\Cech(#1)}         
\newcommand{\oceck}[1]{\Cech^{o}(#1)}    
\newcommand{\oreal}[1]{\rea{\oceck{U}}}  
\newcommand{\creal}[1]{\rea{\ceck{U}}}   
\newcommand{\Cech}{\check{C}}
\DeclareMathOperator{\Gr}{Gr}
\newcommand{\F}{\mathbb{F}}
\newcommand{\cl}{\mathrm{cl}}
\numberwithin{equation}{subsection}
\newcommand{\bH}{\overline{H}}
\newcommand{\bE}{\overline{E}}
\newcommand{\MGL}{MGL}
\newcommand{\BPL}{BPL}
\newcommand{\tmf}{tmf}
\newenvironment{myequation}
  {\addtocounter{subsection}{1}\begin{eqnarray}}
  {\end{eqnarray}$\!\!$}
\DeclareMathOperator{\Sq}{Sq}
\newcommand{\bbox}{\circle{0.4}}
\newcommand{\bc}{\circle*{0.3}}
\newcommand{\rc}{{\color[rgb]{1,0,0}\circle*{0.3}}}
\newcommand{\bigrc}{{\color[rgb]{1,0,0}\circle{0.4}}}
\newcommand{\smallrc}{{\color[rgb]{1,0,0}\circle*{0.2}}}
\newcommand{\vl}{{\line(0,1){1}}}
\newcommand{\vlvar}{{{\color[rgb]{0,0,1}\line(0,1){1}}}}
\newcommand{\vlr}{{\line(1,4){0.25}}}
\newcommand{\vll}{{\line(-1,4){0.25}}}
\newcommand{\vllvar}{{{\color[rgb]{0,0,1}\line(-1,4){0.25}}}}
\newcommand{\vltower}{{\vector(0,1){1}}}
\newcommand{\dltower}{{{\color[rgb]{1,0,0} \vector(1,1){1}}}}
\newcommand{\da}{\vector(1,1){0.8}}
\newcommand{\dl}{{\line(1,1){1}}}
\newcommand{\rdl}{{{\color[rgb]{1,0,0} \line(1,1){1}}}}
\newcommand{\rdlr}{{{\color[rgb]{1,0,0} \line(5,4){1.25}}}}
\newcommand{\htwo}{{{\color[rgb]{0,0.7,0} \line(3,1){3}}}}
\newcommand{\htwovar}{{{\color[rgb]{1,0,1} \line(3,1){3}}}}
\newcommand{\htwoshort}{{{\color[rgb]{0,0.7,0} \line(3,1){2.75}}}}
\newcommand{\htwodshort}{{{\color[rgb]{0,0.7,0} \line(3,1){2.5}}}}
\newcommand{\htwoshortvar}{{{\color[rgb]{1,0,1} \line(3,1){2.75}}}}
\newcommand{\done}{{{\color[rgb]{0,0,1} \vector(-1,1){1}}}}
\newcommand{\dthree}{{{\color[rgb]{0,0.7,0.7} \vector(-1,3){1}}}}
\newcommand{\tinydthree}{{{\color[rgb]{0,0.7,0.7} \vector(-1,3){0.3}}}}
\newcommand{\dfive}{{{\color[rgb]{0.2,0.2,0.7} \vector(-1,4){1}}}}
\newcommand{\SSH}{\langle S\rangle_H}
\renewcommand{\SS}{\langle S\rangle}
\newcommand{\Abar}{\bar{A}}
\begin{document}

\title{The motivic Adams spectral sequence}

\author{Daniel Dugger}
\author{Daniel C. Isaksen}

\address{Department of Mathematics\\ University of Oregon\\ Eugene, OR
97403}

\address{Department of Mathematics\\ Wayne State University\\
Detroit, MI 48202}

\thanks{The first author was supported by NSF grant DMS0604354.  The second
author was supported by NSF grant DMS0803997.}

\email{ddugger@math.uoregon.edu}

\email{isaksen@math.wayne.edu}

\begin{abstract}
We present some data on the cohomology of the motivic Steenrod algebra
over an algebraically closed field of characteristic $0$.  Our
results are based on computer calculations and a motivic version of
the May spectral sequence.  We discuss features of
the associated Adams spectral sequence, and use these tools to give
new proofs of some results in classical algebraic topology.  
We also consider a motivic Adams-Novikov spectral sequence.  The
investigations reveal the existence of some stable motivic homotopy
classes that have no classical analogue.
\end{abstract}

\maketitle

\section{Introduction}

In modern algebraic geometry one studies varieties defined over
arbitrary fields, whereas classically this was done over only the real
or complex numbers.  The subject of motivic homotopy theory is an
attempt to generalize algebraic topology in the same way; rather than
study standard topological spaces, one studies a category of
``spaces'' defined over a fixed ground field.  The subject was put on
firm foundations by the paper \cite{MV}, and it encompasses several
areas of study like \'etale cohomology and algebraic $K$-theory.

One of the things that came out of \cite{MV} was the realization that
almost any object studied in classical algebraic topology could be
given a motivic analog.  In particular, one could define the motivic
stable homotopy groups of spheres.  The present paper begins an
investigation of the Adams spectral sequence, based on mod $2$ motivic
cohomology, that abuts to these groups.  Our results not only
contribute to the study of motivic phenomena, but we find that they
can be used to prove theorems about the {\it classical\/} stable
homotopy groups as well.

\medskip

In the motivic stable homotopy category (over a given field) there is
a bigraded family of spheres $S^{p,q}$.  It follows that one has a
bigraded family of stable homotopy groups $\pi_{p,q}(X)$ for any
motivic spectrum $X$, and consequently all generalized homology and
cohomology theories are bigraded.  For the sphere spectrum $S$, we
will abbreviate $\pi_{p,q}(S)$ to just $\pi_{p,q}$.

The object $S^{n,0}$ is like an ordinary sphere, and smashing with it
models the $n$-fold suspension in the triangulated structure.  The
object $S^{1,1}$ is the suspension spectrum of the scheme $\A^1-0$,
and represents a `geometric' circle.  The other spheres are obtained
by smashing together copies of $S^{1,1}$ and $S^{1,0}$, and taking
formal desuspensions.  One should take from this that the groups
$\pi_{p,0}$ are most like the classical stable homotopy groups,
whereas the groups $\pi_{p,q}$ for $q>0$ somehow contain more
geometric information.  We will see this idea again in
Remark~\ref{re:weight0}.  

The current knowledge about the groups $\pi_{p,q}$ is due to work of
Morel.  It follows from results in \cite{M3} that $\pi_{p,q}=0$ for
$p<q$.  In \cite{M2} it is shown that the groups $\pi_{n,n}$ 
can be completely described in terms of generators and relations using
the Milnor-Witt $K$-theory of the ground field.
This theorem of Morel is in some sense a motivic analog of the
classical computation of the stable $0$-stem.  

\medskip

Let $F$ be a field of characteristic zero, and let $\M_2$ denote the
bigraded motivic cohomology ring of $\Spec F$, with
$\Z/2$-coefficients.  Let $A$ be the mod $2$ motivic Steenrod algebra
over $F$.  All of these notions will be reviewed in
Section~\ref{se:one} below.  The motivic Adams spectral sequence is a tri-graded
spectral sequence with
\[ E_2^{s,t,u}=\Ext^{s,(t+s,u)}_A(\M_2,\M_2),
\]
and $d_r\colon E_r^{s,t,u} \ra E_r^{s+r,t-1,u}$.  Here $s$ is the
homological degree of the Ext group (the Adams filtration), and
$(t+s,u)$ is the internal bigrading coming from the bigrading on $A$
and $\M_2$: so $t+s$ is the topological dimension and $u$ is the
motivic weight.  The spectral sequence was first
studied in the paper \cite{M1}.  
It converges to something related
to the stable motivic homotopy group $\pi_{t,u}$; see
Corollary~\ref{co:conv} for a precise statement.

The motivic Steenrod algebra $A$ is very similar to the classical mod
$2$ Steenrod algebra, but the action of $A$ on $\M_2$ is nontrivial in
general. This extra feature seriously complicates the computation of
the Ext groups in the Adams $E_2$-term.  If the ground field $F$
contains a square root of $-1$, however, the action of $A$ on $\M_2$
is trivial and the Ext groups are much more accessible.  This is the
situation that we study in the present paper.  In fact, for added
convenience we almost entirely restrict to the case of algebraically
closed fields.

One goal of this paper is to present some computer calculations of the
$E_2$-term of the motivic Adams spectral sequence.  We have carried
out the computations far enough to discover several exotic elements
that have no classical analogues.

By the usual `Lefschetz principle', the Adams spectral sequence over
any algebraically closed field of characteristic zero takes the
same form as over the complex numbers.  One might as well restrict to
this case.  But over the complex numbers, there is a map from the
(tri-graded) motivic Adams spectral sequence to the (bi-graded)
classical one.  Comparing the two spectral sequences allows
information to pass in both directions.  On the one hand, we can
deduce motivic differentials from knowledge of the classical
differentials.  We can also turn this around, though, and use the
algebra of the motivic $E_2$-term to prove results about classical
differentials, and about extension problems as well.  This is
described further in Sections~\ref{se:top1} and ~\ref{se:top2} below.
We stress that these methods are purely algebraic; as soon as one
knows the motivic spectral sequence exists, and can do the related
algebraic calculations, some results about the {\it classical\/} Adams
spectral sequence come out almost for free.

It is useful to keep in mind the analogy of mixed Hodge theory and its
influence on topological gadgets like the Leray-Serre spectral
sequence.  Once it is known that the Leray-Serre differentials
preserve the Hodge weight, these differentials become easier to
analyze---there are fewer places where they can be nontrivial.  A
similar thing happens with the Adams spectral sequence and the motivic
weight.  The three-dimensional nature of the motivic Adams spectral
sequence allows the nontrivial groups to be more spread out, and also
allows the algebra of Massey products to work out in slightly
different ways than the classical story.  As a result, certain purely
topological phenomena become easier to analyze.  So far this technique
has only yielded results of casual interest, but a more thorough study
in higher dimensions might be fruitful.

\medskip

Throughout the article, we are considering only the situation where
the ground field is algebraically closed and has characteristic $0$.
The latter assumption is absolutely necessary at the moment, as the
motivic Steenrod algebra is unknown over fields of positive
characteristic.  The assumption that the ground field is algebraically
closed is less crucial.  It is very easy to extend from the
algebraically closed case to the case where the ground field contains
a square root of $-1$; see Remark~\ref{re:genF}.  When considering
fields which do not contain such a square root, however, the
difficulty of the calculations increases dramatically.  A future
paper \cite{DHI} will deal with this more general case.

\begin{remark}
In this paper we only deal with the motivic Adams spectral sequence
based on mod $2$ motivic cohomology.  Most aspects of our discussion
also work at odd primes, but the motivic Steenrod algebra in that case
is exactly isomorphic to the classical Steenrod algebra---the only
difference being the existence of the extra grading by weights.  The
$E_2$-term of the odd primary motivic Adams spectral sequence
therefore takes the same form as it does classically.
\end{remark}

\subsection{Organization of the paper}
In the first two sections we introduce the relevant background and set
up the cohomological Adams spectral sequence.  (It is probably better
to work with the {\it homological\/} Adams spectral sequence, but
working cohomologically allows us to postpone some motivic issues to
later in the paper.)  Then we discuss basic properties of the motivic
$\Ext$ groups and give the computation of the Adams $E_2$-term for $t
\leq 34$.  Next we show how to derive this computation by hand with
the motivic May spectral sequence.  Then we make some simple
computations of differentials, and use this to prove a theorem in
classical topology (Proposition~\ref{pr:h1hj}).

In Section~\ref{se:conv} we turn to convergence issues, and prove that
our Adams spectral sequence converges to the homotopy groups of the
$H$-nilpotent completion $S^{\wedge}_H$ of the sphere spectrum,
where $H$ is the motivic mod 2 Eilenberg-Mac Lane spectrum.
This entails dealing with the homological Adams spectral
sequence and related material.  Section~\ref{se:motAN} discusses the motivic
Adams-Novikov spectral sequence, first studied in \cite{HKO}, and uses
this to deduce further differentials in the motivic Adams spectral
sequence.  From this information we then prove a second theorem in
classical topology, namely Proposition~\ref{pr:kappaepsilon}.  

The appendices contain charts of the motivic Adams spectral sequence,
the motivic Adams-Novikov spectral sequence, and the $E_4$-term of the
motivic May spectral sequence.  These charts are best viewed in color.

\subsection{Notation and terminology}
The following notation is used in the paper:
\begin{enumerate}
\item
$\M_2$ is the mod 2 motivic cohomology of a point (i.e., the ground field).
\item
$A$ is the mod 2 motivic Steenrod algebra.
\item
$H^{*,*}(\blank)$ is mod 2 motivic cohomology.
\item
$H$ is the mod 2 motivic Eilenberg-Mac Lane spectrum.
\item
$\pi_{s,t}$ is the group of stable motivic homotopy classes of maps 
$S^{s,t} \map S^{0,0}$.
\item
$A_{\cl}$ is the classical mod 2 Steenrod algebra.
\item
$H^{*}(\blank)$ is classical mod 2 cohomology.
\item
$H_{\cl}$ is the classical mod 2 Eilenberg-Mac Lane spectrum.
\end{enumerate}

We are working implicitly with appropriate model categories of spectra
and of motivic spectra.  The categories of symmetric spectra \cite{HSS}
and of motivic symmetric spectra \cite{J} will work fine.

Several times in the paper we use without comment the fact that
the spheres are compact objects of the
motivic stable homotopy category.  That is, if $\Wedge_\alpha
E_\alpha$ is a wedge of motivic spectra, then $\oplus_\alpha
[S^{p,q},E_\alpha]\ra [S^{p,q},\Wedge_\alpha E_\alpha]$ is an
isomorphism.  In fact this is true not just for spheres, but for all
suspension spectra of smooth schemes; this is proven in \cite[Section 9]{DI2}.

\subsection{Acknowledgments}
This work was begun while the two authors were visitors at Stanford
University during the summer of 2007, and we are grateful to Gunnar
Carlsson for the invitation to be there.  The first author would also like
to acknowledge a very helpful visit to Harvard University during the
summer of 2008.  This visit was sponsored by Mike Hopkins and
supported by the DARPA FAthm grant FA9550-07-1-0555.

We are also grateful to Mark
Behrens, Robert Bruner, Mike Hill, and Michael Mandell for many
useful conversations.

\section{Background}
\label{se:one}

In this section we review the basic facts about motivic cohomology and motivic
Steenrod operations.

\subsection{The cohomology of a point}
Let $F$ be a field of characteristic $0$.  We
write $\M_2$ for the bigraded cohomology ring $H^{*,*}(\Spec F;\Z/2)$.
A theorem of Voevodsky \cite{V1} describes $\M_2$ explicitly in terms
of the Milnor $K$-theory of $F$.  Unfortunately this result doesn't
have a simple reference, and instead is a confluence of Corollary
6.9(2), Theorem 6.1, and Corollary 6.10 of \cite{V1}.  Unless
otherwise stated, we will always assume in this paper that $F$ is
algebraically closed.  For such fields the description of $\M_2$ has
a particularly simple
form:

\begin{thm}[Voevodsky]
The bigraded ring $\M_2$ is the polynomial ring $\F_2[\tau]$ on one
generator $\tau$ of bidegree $(0,1)$.
\end{thm}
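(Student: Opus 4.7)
The plan is to derive this statement by combining three of Voevodsky's foundational results: the vanishing range for motivic cohomology of a field, the norm residue (Bloch--Kato) theorem at the prime $2$, and the resulting Beilinson--Lichtenbaum comparison with \'etale cohomology. The point is that for $F$ algebraically closed, mod $2$ motivic cohomology of $\Spec F$ reduces to the \'etale cohomology of a point, which is trivial to compute.

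First I would invoke the vanishing $H^{p,q}(\Spec F;\Z/2)=0$ for $p>q$, confining the support of $\M_2$ to the cone $0\le p\le q$. Next I would apply the Beilinson--Lichtenbaum identification, which on this range furnishes a ring isomorphism
$H^{p,q}(\Spec F;\Z/2)\cong H^{p}_{\text{\'et}}(\Spec F;\mu_2^{\otimes q})$.
Because $F$ is algebraically closed, its absolute Galois group is trivial, so this \'etale cohomology vanishes for $p>0$ and collapses to the global-section module $\mu_2^{\otimes q}(F)$ for $p=0$. In characteristic zero $\mu_2(F)\cong\Z/2$, so each $H^{0,q}(\Spec F;\Z/2)$ is a copy of $\Z/2$ and all other bidegrees vanish.

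To pin down the ring structure, I would let $\tau\in H^{0,1}(\Spec F;\Z/2)$ denote the unique nonzero class, which corresponds under the comparison map to $-1\in\mu_2(F)$. Since the Beilinson--Lichtenbaum isomorphism is multiplicative, $\tau^{q}$ is sent to the tensor power $(-1)^{\otimes q}$, which generates $\mu_2^{\otimes q}(F)$. Hence $\tau^{q}$ is nonzero in $H^{0,q}$ for every $q\ge 0$, forcing an identification $\M_2\cong\F_2[\tau]$ of bigraded rings.

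The main obstacle is entirely in the inputs. The vanishing range and the Beilinson--Lichtenbaum isomorphism are both deep theorems of Voevodsky, essentially the content of his proof of the Milnor conjecture at the prime $2$. Once these are granted, the remainder is a short assembly, and the algebraically closed hypothesis enters precisely to kill all positive-degree \'etale cohomology of the base point.
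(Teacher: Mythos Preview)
Your proposal is correct. The paper itself does not give a proof of this theorem; it simply attributes the result to Voevodsky and points to a ``confluence of Corollary 6.9(2), Theorem 6.1, and Corollary 6.10'' of \cite{V1}. Your assembly via the Beilinson--Lichtenbaum comparison with \'etale cohomology is a perfectly valid way to package those inputs. The paper's later Remark~\ref{re:genF} indicates a slightly different packaging, closer to how \cite{V1} is organized: one quotes that $\M_2^{p,p}\cong K^M_p(F)/2$ and that multiplication by $\tau$ gives isomorphisms $\M_2^{p,p}\to\M_2^{p,p+i}$, together with the vanishing outside $0\le p\le q$. For $F$ algebraically closed every unit is a square, so $K^M_p(F)/2=0$ for $p\ge 1$, and $\tau$-periodicity then forces $\M_2=\F_2[\tau]$. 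Both routes rest on the same deep input---Voevodsky's proof of the Milnor conjecture at $2$---and differ only in whether the trivial endpoint computation is phrased as ``\'etale cohomology of a point with trivial Galois group vanishes in positive degree'' or as ``Milnor $K$-theory mod $2$ of an algebraically closed field vanishes in positive degree.''
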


In a bidegree $(p,q)$, we shall refer to $p$ as the topological degree 
and $q$ as the weight.

\subsection{The motivic Steenrod algebra}

We write $A$ for the ring of stable cohomology operations on mod $2$
motivic cohomology.  Voevodsky has computed this ring explicitly for
fields of characteristic zero: see \cite[Section 11]{V2} and
\cite[Theorem 1.4]{V3}.  In general, the ring is generated over $\M_2$
by the Steenrod operations $\Sq^i$.  Note that $\Sq^{2k}$ lies in
bidegree $(2k,k)$, whereas $\Sq^{2k-1}$ lies in bidegree $(2k-1,k-1)$.
These operations satisfy a complicated version of the Adem relations.
Again, the description simplifies quite a bit when the ground field is
algebraically closed:

\begin{thm}[Voevodsky]
The motivic Steenrod algebra $A$ is the $\M_2$-algebra generated by
elements $\Sq^{2k}$ and $\Sq^{2k-1}$ for all $k \geq 1$, of bidegrees
$(2k,k)$ and $(2k-1,k-1)$ respectively, and satisfying the following
relations for $a< 2b$:
\[ 
\Sq^a \Sq^b = 
\sum_{c} \binom{b-1-c}{a-2c} \tau^{?}\Sq^{a+b-c} \Sq^c.
\]
\end{thm}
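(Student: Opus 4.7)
The plan is to derive the statement as a specialization of Voevodsky's full description of the motivic Steenrod algebra over an arbitrary characteristic zero field, as given in \cite[Section 11]{V2} and \cite[Theorem 1.4]{V3}. In that general setting, $A$ is generated over $\M_2$ by the operations $\Sq^i$ subject to motivic Adem relations that involve not only powers of $\tau$ but also terms whose coefficients are divisible by the class $\rho = [-1] \in H^{1,1}(\Spec F;\Z/2)$.

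First I would appeal to Voevodsky's theorem to get the generation statement and a generic form of the Adem relations over any characteristic zero $F$. The next step is to observe that $\rho$ is the image of $-1$ under the natural unit map $K_1^M(F)/2 \to H^{1,1}(\Spec F;\Z/2)$ from mod-$2$ Milnor $K$-theory (under the identification coming from the previously cited theorem on $\M_2$). Since $-1$ admits a square root in any algebraically closed field, the class $\rho$ vanishes, and every $\rho$-term in the general motivic Adem relation collapses. What remains is exactly the sum displayed in the statement, with each summand a power of $\tau$ times $\Sq^{a+b-c}\Sq^c$. The exponent of $\tau$ is then forced by bidegree matching: the topological degrees of the two sides agree automatically at $a+b$, and a short parity analysis on $a$, $b$, and $c$ (using $|\Sq^n| = (n,\lfloor n/2\rfloor)$) determines the weight discrepancy that $\tau$ must absorb.

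The main obstacle, of course, is Voevodsky's underlying theorem itself. Its proof proceeds by an explicit computation of $H^{*,*}(B\mu_2^n)$ in enough detail to show that the $\Sq^i$ detect all stable operations, and draws on substantial input from the proof of the Milnor conjecture at the prime $2$. Reproving this from scratch would be far outside the scope of a paper whose aims are computational, so we take Voevodsky's theorem as a black box and use the simple vanishing $\rho = 0$ and the explicit form $\M_2 = \F_2[\tau]$ to extract the clean statement above.
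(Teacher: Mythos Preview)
Your proposal is correct and matches the paper's treatment: the paper does not prove this theorem at all but simply cites Voevodsky's general result \cite[Section 11]{V2}, \cite[Theorem 1.4]{V3} and remarks that the description ``simplifies quite a bit when the ground field is algebraically closed.'' Your explanation of the mechanism of simplification---that $\rho=[-1]$ vanishes because $-1$ is a square, killing the extra terms in the general motivic Adem relations---is the standard reasoning the paper leaves implicit, and your observation that the $\tau$-exponent is forced by homogeneity is exactly what the paper says in the paragraph following the theorem.
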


Note the coefficients $\tau^?$ in the Adem relation above.  Here the
``?'' denotes an exponent which is either $0$ or $1$.  We could
explicitly write the exponent, but only at the expense of making the
formula appear more unwieldy.  The exponent is easily determined in
practice, because it is precisely what is needed in order to make the
formula homogeneous in the bidegree.  

For example, consider the formula $\Sq^2 \Sq^2 = \tau^? \Sq^3 \Sq^1$.
Since $\Sq^2$ has bidegree $(2,1)$, the left side has total bidegree
$(4,2)$.  On the other hand, $\Sq^1$ has bidegree $(1,0)$ and $\Sq^3$
has bidegree $(3,1)$, so we require one $\tau$ on the right side in
order to make the formula homogeneous.  In other words, we have the
motivic Adem relation $\Sq^2 \Sq^2 = \tau \Sq^3 \Sq^1$.  It turns out
this is representative of what happens in all the motivic Adem
relations: the $\tau$ appears precisely when $a$ and $b$ are even and $c$
is odd.

The Steenrod operations act on the motivic cohomology $H^{*,*}(X)$ of any
smooth scheme $X$.  In particular, they act on the cohomology of
$\Spec F$.  In our case, where $F$ is algebraically closed, $\M_2$ is
concentrated entirely in topological degree $0$.  It follows that the
Steenrod operations (other than the identity) act trivially on $\M_2$ for
dimension reasons.

Just as in the classical situation,
the admissible monomials form a basis for $A$
as an $\M_2$-module \cite{S} \cite{MT}.

\subsection{The Milnor basis}
\label{se:milnor}

A theorem of Voevodsky gives an explicit description of the dual of $A$
\cite{V2}.  As an algebra $A_*$ can be described as
the subalgebra $\M_2[\zeta_i, \tau^{-1} \zeta_i^2]_{i\geq 1}\subseteq
\M_2[\tau^{-1},\zeta_1,\zeta_2,\ldots]$.  Here $\tau$ has (homological) bidegree $(0,-1)$
and $\zeta_i$ has bidegree $(2^i-1,2^{i-1}-1)$.

The elements $\tau^{-1} \zeta_i^2$
are indecomposable in $A$, but their names are a useful way of encoding the 
relation $(\zeta_i)^2 = \tau \cdot \tau^{-1} \zeta_i^2$.
In the notation of \cite{V2}, $\zeta_i$ corresponds to $\tau_i$
and $\tau^{-1} \zeta_i^2$ corresponds to $\xi_{i+1}$.
Under the canonical map $A_*\ra
A_*^{\cl}$ to the classical dual Steenrod algebra---see
Section~\ref{subsctn:compare-algebra} below---the element $\zeta_i$
maps to $\xi_i$ and $\tau^{-1}\zeta_i^2$ maps to $\xi_i^2$.

The comultiplication on $A_*$ is identical to the comultiplication
on the classical Steenrod algebra \cite{Mi}, except that appropriate powers of
$\tau$ must be inserted to make the formulas homogeneous in the bidegree.
Namely, the coproduct on $\zeta_k$ takes the form
$\sum_i \tau^? \zeta^{2^i}_{k-i} \otimes \zeta_i$.

There is an evident  basis for $A_*$ as an $\M_2$-module
consisting of monomials $\tau^?\zeta_1^{r_1}\zeta_2^{r_2}\cdots$.
Here $?$ is the smallest power of $\tau$ that gives an expression
lying in $A_*$, namely $-\sum_i \lfloor \frac{r_i}{2}\rfloor$.  
Just as in the classical situation, this basis for $A_*$ yields a dual
basis for $A$ (as an $\M_2$-module) called the Milnor basis.
The Milnor basis consists of elements of the form
$P^R$, where $R = (r_1,r_2, \ldots)$ ranges over all finite sequences of 
non-negative integers.  
Here $P^R$ is dual to $\tau^? \zeta_1^{r_1} \zeta_2^{r_2} \ldots$.
The bidegree of $P^R$ is easily calculated, and equals
\[
\left( \sum r_i (2^i-1), 
\sum \left\lfloor \frac{r_i(2^i - 1)}{2} \right\rfloor \right).
\]

Products $P^R P^S$ can be computed with matrices and multinomial coefficients
just as in \cite{Mi}, except that some terms require a power of $\tau$
as a coefficient.  Similarly to the motivic Adem relations, 
these coefficients are easy to calculate; they are exactly
what are needed in order to make the formulas homogeneous in the bidegree.
However, unlike the Adem relations, it is sometimes necessary to use
exponents greater than one.  (The first occurrence of $\tau^2$ in these
formulas occurs in topological dimension $26$.)

We make use of the Milnor basis in Section \ref{se:May} when
we discuss the motivic May spectral sequence.  Also,
some of our computer calculations use this basis.

\subsection{Comparison with the classical Steenrod algebra}
\label{subsctn:compare-algebra}

There is a topological realization functor from
motivic spectra over $\C$ to
ordinary spectra, as described in \cite{MV} \cite{DI1}.  This functor
is uniquely determined (up to homotopy) by the fact that it
preserves homotopy colimits and weak equivalences, and sends the
motivic suspension spectrum of a smooth scheme $X$ to the ordinary
suspension spectrum of its topological space of complex-valued points
$X(\C)$.  We generalize this notation, so that if $E$ is
any motivic spectrum then $E(\C)$ denotes its topological realization.

Let $H$ be the mod 2 motivic Eilenberg-Mac Lane spectrum, i.e., the
motivic spectrum that represents mod 2 motivic cohomology.  
It follows from \cite[Lemma 4.36]{V3} that $H(\C)$ is the
classical mod 2 Eilenberg-Mac Lane spectrum $H_{\cl}$.  
So topological realization induces natural transformations
$H^{p,q}(X) \map H^p(X(\C))$ denoted $\alpha \mapsto \alpha(\C)$.  We
implicitly consider $H^*(X(\C))$ to be a bigraded object concentrated
in weight $0$.

\begin{defn}
For any motivic spectrum $X$, let
\[
\theta_X\colon H^{*,*}(X) \otimes_{\M_2} \M_2[\tau^{-1}] \map 
H^*(X(\C)) \otimes_{\F_2} \M_2[\tau^{-1}]
\]
be the $\M_2[\tau^{-1}]$-linear map
that takes a class $\alpha$ of weight $w$ in $H^{*,*}(X)$
to $\tau^w \alpha(\C)$.  
\end{defn}

In order for $\theta_X$ to be well-defined, we must observe that
$\tau(\C)$ equals 1 in the singular cohomology of a point.
This follows by splitting the topological realization map into two
pieces
\[ H^{p,q}(\Spec \C) \ra H^p_{et}(\Spec \C;\mu_2^{\otimes q}) \ra 
H^p_{sing}(\Spec \C)
\]
where the group in the middle is \'etale cohomology.  For $p\leq q$
the first map is an isomorphism by \cite{V1}, and it is a classical
result of Grothendieck and his collaborators \cite[XI]{SGA4} that the second map is an
isomorphism.  In particular, this holds when $p=0$ and $q=1$.  Since
the group in question is $\Z/2$, it must be that $\tau$ maps to $1$.

\begin{lemma}
\label{lemma:theta}
The map $\theta_X$ is an isomorphism of bigraded $\M_2[\tau^{-1}]$-modules
if $X$ is the motivic sphere spectrum
or if $X$ is the motivic Eilenberg-Mac Lane spectrum.
\end{lemma}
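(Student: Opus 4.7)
The plan is to verify the two cases separately by exhibiting explicit $\M_2[\tau^{-1}]$-bases of source and target and checking that $\theta_X$ sends basis elements to basis elements, up to a unit power of $\tau$. In both cases the source and target will be free $\M_2[\tau^{-1}]$-modules of the same rank in each bidegree.

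For the sphere case $X = S^{0,0}$, the realization $X(\C)$ is the ordinary sphere spectrum, so $H^*(X(\C)) = \F_2$ concentrated in bidegree $(0,0)$, while $H^{*,*}(X) = \M_2$ by Voevodsky's theorem. Both sides of $\theta_X$ are therefore free of rank one over $\M_2[\tau^{-1}]$, generated by $1 \otimes 1$, and $\theta_S$ sends this generator to itself (since the weight is $0$). Hence $\theta_S$ is the identity map on $\M_2[\tau^{-1}]$.

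For $X = H$, the source is $A \otimes_{\M_2} \M_2[\tau^{-1}]$ and the target is $A_{\cl} \otimes_{\F_2} \M_2[\tau^{-1}]$. By the results of Voevodsky recalled above, $A$ is free as an $\M_2$-module on the admissible monomials $\Sq^I = \Sq^{i_1}\cdots \Sq^{i_r}$, and classically $A_{\cl}$ is free as an $\F_2$-module on the same admissible monomials. After extending scalars, both sides become free $\M_2[\tau^{-1}]$-modules on the set of admissible sequences $I$. The motivic $\Sq^I$ has weight $w(I) = \sum_j \lfloor i_j/2\rfloor$ (and topological degree $|I| = \sum_j i_j$), so $\theta_H$ sends $\Sq^I$ to $\tau^{w(I)} \cdot (\Sq^I)(\C)$, which lies in bidegree $(|I|, w(I))$ as required. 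Once we know that $(\Sq^i)(\C)$ equals the classical $\Sq^i$, multiplying by the unit $\tau^{-w(I)}$ shows that the images run over an $\M_2[\tau^{-1}]$-basis of the target, so $\theta_H$ is a bijection on bases and hence an isomorphism.

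The only nontrivial input is the compatibility $(\Sq^i)(\C) = \Sq^i$ under realization. This should follow automatically from the characterization of $\Sq^i$ by its action on the fundamental class of $K(\Z/2,n)$ together with the identification $H(\C) = H_{\cl}$ quoted above from \cite[Lemma 4.36]{V3}: topological realization is a natural transformation of cohomology theories that preserves the fundamental class, so it sends the motivic $\Sq^i$ to its classical namesake. This compatibility is the main (and essentially only) obstacle; once it is granted, the rest is a routine matching of free bases and bidegrees.
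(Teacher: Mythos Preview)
Your proof is correct and follows essentially the same approach as the paper's: both cases reduce to checking that source and target are free $\M_2[\tau^{-1}]$-modules on the admissible monomials (or on $1$, in the sphere case) and that $\theta_X$ sends basis elements to unit multiples of basis elements. You are in fact slightly more careful than the paper in that you explicitly flag the compatibility $(\Sq^i)(\C)=\Sq^i_{\cl}$ as the one nontrivial input, whereas the paper simply asserts the formula $\theta_H(\Sq^{2k})=\tau^{\pm k}\Sq^{2k}$ without further comment; your sketch of why this compatibility holds (via the characterization of $\Sq^i$ on the fundamental class and the identification $H(\C)\simeq H_{\cl}$) is adequate.
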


In the following proof, 
we write $A_{\cl}$ for the classical topological mod $2$ Steenrod algebra.

\begin{proof}
This follows directly from the description
of the motivic cohomology of a point and of the motivic Steenrod algebra.

For the sphere spectrum, $\theta_X$ is just the map
$\M_2 \otimes_{\M_2} \M_2[\tau^{-1}] \map \F_2 \otimes_{\F_2} \M_2[\tau^{-1}]$,
which is obviously an isomorphism.

For the motivic Eilenberg-Mac Lane spectrum, note that 
$H^{*,*}(X)$ is the motivic Steenrod algebra, and $H^*(X(\C))$
is the classical topological Steenrod algebra.  The map $\theta_X$
takes $\Sq^{2k}$ to $\tau^{-k} \Sq^{2k}$, and takes
$\Sq^{2k+1}$ to $\tau^{-k} \Sq^{2k+1}$.  But we have that
$A \otimes_{\M_2} \M_2[\tau^{-1}]$ is free as an $\M_2[\tau^{-1}]$-module
on the admissible monomials.  
Also, 
$A_{\cl} \otimes_{\F_2} \M_2[\tau^{-1}]$ is free as an 
$\M_2[\tau^{-1}]$-module on the admissible monomials.  It follows that
$\theta_X$ is an isomorphism.
\end{proof}

\begin{cor}
\label{cor:theta}
The map
$A \otimes_{\M_2} \M_2[\tau^{-1}] \map A_{\cl} \otimes_{\F_2} \M_2[\tau^{-1}]$
that takes $\Sq^{2k}$ to $\tau^{-k} \Sq^{2k}$ and takes
$\Sq^{2k-1}$ to $\tau^{-k} \Sq^{2k-1}$ is an isomorphism of bigraded rings.
\end{cor}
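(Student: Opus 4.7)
The map in question is precisely $\theta_H$ from Lemma~\ref{lemma:theta} applied to $X = H$. By that lemma it is already an isomorphism of bigraded $\M_2[\tau^{-1}]$-modules, so the corollary reduces to showing that $\theta_H$ is multiplicative.

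The plan is to decompose this into two facts. First, one observes that topological realization is symmetric monoidal on the motivic stable homotopy category, and so it sends the motivic ring spectrum $H$ to a ring spectrum; combined with the identification $H(\C) \simeq H_{\cl}$ noted in the text, this shows that the realization map $\alpha \mapsto \alpha(\C)$ from $H^{*,*}(H) = A$ to $H^*(H(\C)) = A_{\cl}$ is itself a ring homomorphism. Second, one verifies multiplicativity of $\theta_H$ by a direct calculation using additivity of weights. If $\alpha, \beta \in H^{*,*}(H)$ have weights $w_1$ and $w_2$, then $\alpha\beta$ has weight $w_1 + w_2$, and therefore
$$\theta_H(\alpha\beta) = \tau^{w_1+w_2}(\alpha\beta)(\C) = \bigl(\tau^{w_1}\alpha(\C)\bigr)\bigl(\tau^{w_2}\beta(\C)\bigr) = \theta_H(\alpha)\theta_H(\beta).$$
This identity extends $\M_2[\tau^{-1}]$-linearly to the tensor product, so $\theta_H$ is a bigraded ring homomorphism. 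Combined with Lemma~\ref{lemma:theta}, this proves the corollary, and the explicit formulas on $\Sq^{2k}$ and $\Sq^{2k-1}$ follow by specializing the definition of $\theta_H$ to these generators of $A$.

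The only real subtlety is the assertion that topological realization respects the $H$-ring structure; this should follow from monoidal properties of a suitable model for realization (such as that of \cite{J}) together with the essential uniqueness of the mod $2$ Eilenberg-Mac Lane spectrum as a commutative ring spectrum. Once this is granted, the remainder of the argument is a formal bookkeeping exercise with weights.
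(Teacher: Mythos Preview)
Your proof is correct and takes the same approach as the paper, whose entire argument is the single sentence ``This follows immediately from Lemma~\ref{lemma:theta}.'' You have simply made explicit the multiplicativity of $\theta_H$---via monoidality of realization and additivity of weights---that the paper leaves implicit.
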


\begin{proof}
This follows immediately from Lemma \ref{lemma:theta}.
\end{proof}

Before giving the final lemmas of this section we need a brief
discussion of a finite type condition.  For the remainder of this
subsection we work over a general field $F$ (not
necessarily algebraically closed).   

\begin{defn}
\label{defn:Chow-weight}
Given an element $x$ of bidegree $(p,q)$ in a bigraded group,
the \dfn{Chow weight} $C(x)$ of $x$ is the integer $2q-p$.
\end{defn}

The terminology is motivated
by the fact that $2q-p$ is a natural index in the 
higher Chow group perspective on motivic cohomology \cite{Bl}.
It is known that the mod $2$
motivic cohomology groups $H^{p,q}(X)$ of any smooth scheme are
concentrated in non-negative Chow weight and also in the range $p \geq 0$ (see
\cite[Thm. 19.3]{MVW} and \cite[Thm. 7.8]{V1}).

\begin{defn}
Let $\{n_\alpha=(p_\alpha,q_\alpha)\}_{\alpha\in S}$ be a set of
bidegrees.  This set is \dfn{motivically finite type} if for 
any $\alpha$ in $S$, there are only finitely many $\beta$ in $S$
such that $p_\alpha \geq p_\beta$ and 
$2q_\alpha - p_\alpha \geq 2q_\beta - p_\beta$.
\end{defn}

For example, this condition is satisfied if:
\begin{enumerate}[(i)]
\item
the $p_\alpha$'s are bounded below, and
\item 
for each $\alpha$, there are only finitely many $\beta$'s for which
$p_\beta=p_\alpha$ and $q_\beta\leq q_\alpha$.  
\end{enumerate}

%
%

\begin{defn}
Let $X$ be a motivic spectrum.
A wedge of suspensions
$\Wedge_{\alpha\in S}
\Sigma^{n_\alpha}X$ is
\dfn{motivically finite type} if the index set of bidegrees is 
motivically finite type.
Likewise, a free graded module over $\M_2$ is 
\dfn{motivically finite type} if it
is of the form $\oplus_{\alpha\in S}\Sigma^{n_\alpha}\M_2$ for a
motivically finite type index set of bidegrees.
\end{defn}

\begin{lemma}
\label{le:sum=prod}
For any smooth scheme $X$ and any motivically finite type index set of bidegrees
$\{n_\alpha\}_{\alpha \in S}$, the canonical map
\[ \bigoplus_\alpha \Sigma^{n_\alpha} H^{*,*}(X) \ra \prod_\alpha
\Sigma^{n_\alpha} H^{*,*}(X) 
\]
is an isomorphism.  Here $\Sigma^{n_\alpha}$ denotes the algebraic
shifting of bigraded modules.  
\end{lemma}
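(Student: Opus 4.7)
The plan is to check the statement in each bidegree separately and show that only finitely many $\alpha$ can contribute to the sum (equivalently, the product) in any fixed bidegree $(p,q)$. Once established, this reduces the canonical map in bidegree $(p,q)$ to a finite direct sum comparing with a finite direct product, which is trivially an isomorphism.

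First I would unwind the definitions. The shifted module $\Sigma^{n_\alpha} H^{*,*}(X)$ has its component in bidegree $(p,q)$ equal to $H^{p-p_\alpha,\, q-q_\alpha}(X)$, where $n_\alpha = (p_\alpha, q_\alpha)$. The key input is the vanishing range for motivic cohomology of a smooth scheme recalled just above Definition~\ref{defn:Chow-weight}: namely $H^{a,b}(X) = 0$ unless $a \geq 0$ and $2b - a \geq 0$. Applying this with $a = p - p_\alpha$ and $b = q - q_\alpha$, we see that the bidegree $(p,q)$ contribution from the $\alpha$-th summand can be nonzero only when
\[
p_\alpha \leq p \qquad \text{and} \qquad 2q_\alpha - p_\alpha \leq 2q - p.
\]

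Now fix any $\alpha_0 \in S$. For a given bidegree $(p,q)$, if no $\beta \in S$ satisfies the two inequalities above, then the bidegree $(p,q)$ slice of both the sum and the product is zero and there is nothing to prove. Otherwise, pick one such $\beta_0$; then any $\beta$ contributing in bidegree $(p,q)$ satisfies $p_\beta \leq p$ and $2q_\beta - p_\beta \leq 2q - p$, which together with $p_{\beta_0} \leq p$ and $2q_{\beta_0}-p_{\beta_0}\leq 2q-p$ can be combined to produce bounds of the form $p_\beta \leq p'$ and $2q_\beta - p_\beta \leq C'$ for suitable $p', C'$. The motivically finite type hypothesis, applied with $\alpha$ chosen so that $p_\alpha \geq p'$ and $2q_\alpha - p_\alpha \geq C'$ (such an $\alpha$ is easy to produce by enlarging $p_{\beta_0}$ and $2q_{\beta_0} - p_{\beta_0}$ if necessary, or by a trivial direct argument using the defining condition directly), guarantees that only finitely many such $\beta$ exist.

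The cleanest way to carry this out is to observe directly from the definition of motivically finite type: fix any $\alpha_0$, then enlarge $n_{\alpha_0}$ to some $\tilde n = (\tilde p, \tilde q)$ with $\tilde p \geq p$ and $2\tilde q - \tilde p \geq 2q - p$, and apply the finite type condition at $\tilde n$. The only subtlety — and the one step I expect to require care — is arranging the inequalities so that the finite type hypothesis (stated in terms of existing indices $\alpha \in S$) can be invoked; one may simply handle this by showing that the condition is equivalent to: for any pair of integers $(P,C)$, the set $\{\beta \in S : p_\beta \leq P,\ 2q_\beta - p_\beta \leq C\}$ is finite, which is what is actually being used. Once finiteness in each bidegree is in hand, the isomorphism is immediate, and this completes the proof.
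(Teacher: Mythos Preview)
Your proposal is correct and takes essentially the same approach as the paper, which dispatches the lemma in a single sentence: ``The motivically finite type condition guarantees that in each bidegree, only finitely many terms of the product are nonzero.'' You are supplying the details behind that sentence, and you correctly identify both the relevant vanishing region for $H^{*,*}(X)$ (nonnegative topological degree and nonnegative Chow weight) and the one genuine subtlety: the finite type hypothesis is phrased in terms of indices $\alpha \in S$, whereas what is needed is finiteness of $\{\beta \in S : p_\beta \leq P,\ 2q_\beta - p_\beta \leq C\}$ for arbitrary integers $(P,C)$.

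Two small comments. First, your middle paragraph (introducing $\beta_0$, ``combining'' inequalities, and ``enlarging'' $n_{\beta_0}$) does not actually work as written---you cannot adjoin new indices to $S$, and the presence of $\beta_0$ gives no control over other $\beta$'s. You seem to recognize this and retreat to the clean reformulation, which is the right move. Second, the equivalence you assert at the end is true but not entirely tautological: one shows that an infinite subset of $\{(p,C)\in\Z^2 : p\leq P,\ C\leq C_0\}$ cannot be an antichain (a strictly decreasing sequence in $p$ forces a strictly increasing, bounded-above sequence of integers in $C$), so it has finitely many maximal elements, and by pigeonhole one of these maximal elements---which lies in $S$---dominates infinitely many others, contradicting the finite type hypothesis. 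This is the Dickson's lemma-style step hiding behind both your argument and the paper's.
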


\begin{proof}
The motivically finite type condition guarantees that in each
bidegree, only finitely many terms of the product are nonzero.  
\end{proof}

\begin{lemma}
\label{lemma:triangulated-theta}
Let $X$ belong to the smallest triangulated category of motivic spectra
that contains the spheres and motivically finite type
 wedges of suspensions of the mod 2
motivic Eilenberg-Mac\,Lane spectrum $H$.  Then
$\theta_X$ is an isomorphism.
\end{lemma}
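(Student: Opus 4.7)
The plan is to let $\mathcal{C}$ be the full subcategory of motivic spectra $X$ for which $\theta_X$ is an isomorphism, and to verify that $\mathcal{C}$ is a triangulated subcategory containing the spheres and all motivically finite type wedges of suspensions of $H$. Minimality of the triangulated subcategory in the hypothesis then forces $\mathcal{C}$ to contain every $X$ described in the statement. Naturality of $\theta$ makes $\mathcal{C}$ closed under isomorphism, which is enough to ensure it is a full triangulated subcategory once the further closure properties are in hand.

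For closure under bigraded suspension, $H^{*,*}(\blank)$ and $H^*((\blank)(\C))$ both commute with $\Sigma^{p,q}$ (the latter realizes to $\Sigma^p$), and $\theta$ is compatible with these shifts, so $X\in\mathcal{C}$ iff $\Sigma^{p,q}X\in\mathcal{C}$. For closure under cofiber sequences $X\to Y\to Z$, topological realization preserves homotopy cofibers, so both $H^{*,*}$ and $H^*((\blank)(\C))$ produce long exact sequences in the bigraded setting. The functors $-\otimes_{\M_2}\M_2[\tau^{-1}]$ and $-\otimes_{\F_2}\M_2[\tau^{-1}]$ are exact because they are localizations and hence flat, so the long exact sequences survive. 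The natural transformation $\theta$ then yields a morphism of long exact sequences, and the five lemma shows that if two of $\theta_X,\theta_Y,\theta_Z$ are isomorphisms, so is the third. The spheres lie in $\mathcal{C}$ by Lemma~\ref{lemma:theta} together with closure under suspension.

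The remaining case is a motivically finite type wedge $W=\bigWedge_{\alpha\in S}\Sigma^{n_\alpha}H$. Because realization preserves homotopy colimits, $W(\C)=\bigWedge_\alpha\Sigma^{p_\alpha}H_{\cl}$; because the spheres are compact, cohomology turns wedges into products, giving $H^{*,*}(W)\cong\prod_\alpha\Sigma^{n_\alpha}A$ and $H^*(W(\C))\cong\prod_\alpha\Sigma^{p_\alpha}A_{\cl}$. I would then show that in each bidegree these products agree with the corresponding direct sums on both sides. On the motivic side this is the content of Lemma~\ref{le:sum=prod}, adapted to $A$ in place of $H^{*,*}$ of a smooth scheme; the argument goes through because only the vanishing properties that cut off the support of $A$ in each bidegree are needed, and the motivically finite type condition guarantees only finitely many $\alpha$ contribute to any fixed $(p,q)$. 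On the topological side, Corollary~\ref{cor:theta} lets us rewrite $\Sigma^{p_\alpha}A_{\cl}\tens_{\F_2}\M_2[\tau^{-1}]$ as $\Sigma^{n_\alpha}A\tens_{\M_2}\M_2[\tau^{-1}]$, reducing the sum-equals-product statement after $\tau$-inversion to the one already established motivically. With this identification in place, $\theta_W$ becomes the direct sum of the maps $\theta_{\Sigma^{n_\alpha}H}$, each an isomorphism by Lemma~\ref{lemma:theta} and the suspension-closure step.

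The main obstacle is the wedge step: one must verify that the motivically finite type condition, together with the support properties of $A$ and $A_{\cl}$ in each bidegree, is strong enough to collapse the cohomology product of the infinite wedge to a direct sum on both the motivic and the topological sides after inverting $\tau$. Once this finiteness is established, the rest of the proof is a formal five lemma and naturality argument, and the conclusion follows by induction on the construction of $X$ within the given triangulated subcategory.
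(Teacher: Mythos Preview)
Your proposal is correct and follows essentially the same approach as the paper: define $\mathcal{C}$ as the class of spectra where $\theta$ is an isomorphism, check closure under suspension and cofiber sequences, and handle the motivically finite type wedge case by converting the cohomology product to a direct sum via Lemma~\ref{le:sum=prod} so that $\theta_W$ decomposes as a sum of copies of $\theta_H$. You are in fact more careful than the paper on two points---you explicitly note that Lemma~\ref{le:sum=prod} is stated for smooth schemes and must be adapted to $H^{*,*}(H)=A$, and you address the sum-versus-product issue on the topological side separately---whereas the paper simply asserts that one may ``distribute the direct sum over the tensors'' and concludes.
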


\begin{proof}
Let $\cC$ be the full subcategory of the motivic stable homotopy category
consisting of all
motivic spectra $A$ such that $\theta_A$ is an isomorphism.  It is
clear that $\cC$ is closed under suspensions and cofiber sequences,
and by Lemma~\ref{lemma:theta} $\cC$ contains the sphere spectrum and $H$.
It only
remains to show that $\cC$ contains all motivically finite type wedges of $H$.  

Let $W=\Wedge_{\alpha} \Sigma^{n_\alpha}H$ be a motivically finite type wedge of
suspensions of $H$.  Then $H^{*,*}(W)=\prod_{\alpha} \Sigma^{n_\alpha}H^{*,*}(H)$, and
by Lemma~\ref{le:sum=prod} this equals
$\oplus_{\alpha} \Sigma^{n_\alpha}H^{*,*}(H)$ as modules over $\M_2$.  We
may then distribute the direct sum over the tensors that appear in the
definition of $\theta_W$, and conclude that $\theta_W$ is an
isomorphism based on the fact that $\theta_H$ is.  
\end{proof}


\section{The motivic Adams spectral sequence}
\label{se:motadams}

Starting with the motivic sphere spectrum $S^{0,0}$,
we inductively construct an Adams resolution
\[ \xymatrix{
 & K_2 & K_1 & K_0\\
\cdots \ar[r] & X_2\ar[r]\ar[u] & X_1 \ar[r]\ar[u] & X_0 \ar[u]
\ar@{=}[r] & S^{0,0}
}
\]
where each $K_i$ is a motivically finite type
wedge of suspensions of $H$, $X_i \ra K_i$ is
surjective on mod 2 motivic cohomology, and $X_{i+1}$ is the homotopy
fiber of $X_i \ra K_i$.
Applying $\pi_{*,u}$ gives us an exact couple---one for each $u$---and
therefore a $\Z$-graded family of spectral sequences indexed by $u$.
The usual arguments show that the $E_2$-term is
$\Ext_A^{s,(t+s,u)}(\M_2,\M_2)$ and that the spectral sequence abuts to
something related to the stable motivic homotopy group $\pi_{t,u}$.  

\begin{remark}
In Section~\ref{se:conv} we will prove that the spectral sequence
converges to the homotopy groups of the $H$-nilpotent completion of
the sphere spectrum $S$, denoted $S^{\wedge}_H$.  
We will also discuss the $H$-homology
version of this tower, in which each $K_i$ is replaced by $X_i\Smash
H$.  Both of these discussions involve some technicalities that we
wish to avoid for the moment.
\end{remark}

\subsection{Comparison with the classical Adams spectral sequence}

Applying the
topological realization functor to our Adams resolution, we obtain a tower 
\[ \xymatrix{
 & K_2(\C) & K_1(\C) & K_0(\C)\\
\cdots \ar[r] & X_2(\C)\ar[r]\ar[u] & X_1(\C) \ar[r]\ar[u] & X_0(\C) \ar[u]
\ar@{=}[r] & S 
}
\]
of homotopy fiber sequences of ordinary spectra.
It is not a priori clear that this is a classical
Adams resolution, however. This requires an argument.

\begin{prop}
Let $f: X_i\ra K_i$ be one of the maps in the motivic Adams resolution.
The map $f(\C): X(\C) \ra K(\C)$ is surjective on mod $2$ singular cohomology.
\end{prop}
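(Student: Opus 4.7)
The plan is to leverage the $\theta$-isomorphisms from Lemma~\ref{lemma:triangulated-theta} to reduce the claim about singular cohomology to the given motivic statement after inverting $\tau$.

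First, I would show by induction on $i$ that each $X_i$ lies in the triangulated subcategory $\cC$ of Lemma~\ref{lemma:triangulated-theta}. The base case $X_0 = S^{0,0}$ is a sphere, so $X_0 \in \cC$. For the inductive step, $K_i$ is by construction a motivically finite type wedge of suspensions of $H$, hence in $\cC$, and $X_{i+1}$ sits in a cofiber sequence with $X_i$ and $K_i$, so it also belongs to $\cC$. Consequently $\theta_{X_i}$ and $\theta_{K_i}$ are both isomorphisms of bigraded $\M_2[\tau^{-1}]$-modules, and these isomorphisms are natural with respect to $f$.

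Next, the hypothesis that $f^*\colon H^{*,*}(K_i) \ra H^{*,*}(X_i)$ is surjective survives tensoring with $\M_2[\tau^{-1}]$ over $\M_2$, since localization is exact. Feeding this through the naturality square for $\theta$ and using the two isomorphisms just established, I obtain that
\[
f(\C)^* \otimes \id \colon H^*(K_i(\C)) \otimes_{\F_2} \M_2[\tau^{-1}] \ra H^*(X_i(\C)) \otimes_{\F_2} \M_2[\tau^{-1}]
\]
is surjective.

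Finally, since $\M_2[\tau^{-1}]$ is a nonzero free $\F_2$-module, the functor $-\otimes_{\F_2} \M_2[\tau^{-1}]$ is faithfully exact on $\F_2$-vector spaces, so surjectivity above forces $f(\C)^*$ itself to be surjective. (Equivalently, one may restrict to the weight-$0$ summand on each side, which recovers the original singular cohomology map on the nose.) The only non-formal ingredient is the inductive verification that $X_i \in \cC$; once that is in hand, naturality of $\theta$ combined with exactness of $\tau$-inversion does the rest, and I expect no real obstacle beyond keeping the bigradings straight.
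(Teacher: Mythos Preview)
Your proposal is correct and follows essentially the same route as the paper: invert $\tau$, invoke the $\theta$-isomorphisms of Lemma~\ref{lemma:triangulated-theta} for $X_i$ and $K_i$, and then restrict to weight zero. The paper's proof is terser (it simply cites Lemma~\ref{lemma:triangulated-theta} without spelling out the induction showing $X_i$ belongs to the relevant subcategory), but the argument is the same.
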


\begin{proof}
We know that the map
$H^{*,*}(K_i) \map H^{*,*}(X_i)$ is surjective, so the map
$H^{*,*}(K_i) \otimes_{\M_2} \M_2[\tau^{-1}] \map 
H^{*,*}(X_i) \otimes_{\M_2} \M_2[\tau^{-1}]$ is also surjective.
It follows that 
$ H^*(K_i(\C)) \otimes_{\F_2} \M_2[\tau^{-1}] \map
H^*(X_i(\C)) \otimes_{\F_2} \M_2[\tau^{-1}]$ is surjective,
since $\theta_{K_i}$ and $\theta_{X_i}$ are isomorphisms
by Lemma \ref{lemma:triangulated-theta}.
Restriction to weight zero shows that the map
$H^*(K_i(\C)) \map H^*(X_i(\C))$ is surjective.
\end{proof}

Topological realization gives natural maps $\pi_{p,q}(Z)\ra
\pi_p(Z(\C))$ for any motivic spectrum $Z$, so that we get a map from
the homotopy exact couple of $(X,K)$ to that of $(X(\C),K(\C))$.  In
this way we obtain a map of spectral sequences from the motivic Adams
spectral sequence for the motivic sphere spectrum 
to the classical Adams spectral sequence for the classical 
sphere spectrum.

\subsection{Comparison of $\Ext$ groups}
\label{subsctn:ext-compare}

Our next task is to compare $\Ext$ groups 
over the motivic Steenrod algebra $A$ to
$\Ext$ groups over the classical Steenrod algebra $A_{\cl}$.  

Note that $\Ext^{0,*}_A(\M_2,\M_2)=\Hom^*_A(\M_2,\M_2)=\F_2[\tilde{\tau}]$
where $\tilde{\tau}$ is the dual of $\tau$ and has bidegree $(0,-1)$.
We will abuse notation and often just write $\tau$ instead of
$\tilde{\tau}$, but we will consistently write $\tilde{\M}_2$ for
$\F_2[\tilde{\tau}]$.

For fixed $s$ and $t$,
$\Ext_A^{s,(t+s,*)}(\M_2,\M_2)$ is an $\tilde{\M}_2$-module; it
therefore decomposes as a sum of free modules and
modules of the form $\tilde{\M}_2/\tau^k$.  
The following result shows that the free
part coincides with $\Ext$ over the classical Steenrod algebra.

\begin{prop} 
\label{prop:compare-ext}
There is an isomorphism of rings
\[\Ext_A(\M_2,\M_2)\tens_{\tilde{\M}_2}\tilde{\M}_2[\tau^{-1}] \iso
\Ext_{A_{\cl}}(\F_2,\F_2)\tens_{\F_2} \F_2[\tau,\tau^{-1}].
\]
\end{prop}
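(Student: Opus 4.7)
The strategy is to show that inverting $\tau$ on a free $A$-resolution of $\M_2$ produces a resolution that, via Corollary \ref{cor:theta}, is essentially a base change of a free $A_{\cl}$-resolution of $\F_2$, and then to observe that localization commutes with the relevant $\Hom$ computation.

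First, I would construct a free resolution $P_\bullet \to \M_2$ over $A$ in which each $P_i$ is a motivically finite type direct sum of shifted copies of $A$ (this is possible since $\M_2$ and all the syzygies are motivically finite type, bounded below in topological degree and in Chow weight). Then $\Ext_A^{*,*}(\M_2,\M_2)$ is the cohomology of the complex $\Hom_A(P_\bullet,\M_2)$. Using the motivically finite type condition and Lemma \ref{le:sum=prod}, one has $\Hom_A(\bigoplus_\alpha \Sigma^{n_\alpha}A,\M_2) = \bigoplus_\alpha \Sigma^{n_\alpha}\M_2$.

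Next, since $\tilde{\M}_2[\tau^{-1}] = \M_2[\tau^{-1}]$ is flat over $\tilde{\M}_2 = \M_2$, the functor $(-)\otimes_{\tilde{\M}_2}\tilde{\M}_2[\tau^{-1}]$ is exact and commutes with taking cohomology. Thus
\[
\Ext_A(\M_2,\M_2)\otimes_{\tilde{\M}_2}\tilde{\M}_2[\tau^{-1}] \;\cong\; H^*\bigl(\Hom_A(P_\bullet,\M_2)\otimes_{\M_2}\M_2[\tau^{-1}]\bigr).
\]
Using the motivically finite type sums again, this Hom-complex after localization identifies with $\Hom_{A[\tau^{-1}]}(P_\bullet[\tau^{-1}],\M_2[\tau^{-1}])$, where $A[\tau^{-1}] := A\otimes_{\M_2}\M_2[\tau^{-1}]$. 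Now apply Corollary \ref{cor:theta}: there is a bigraded ring isomorphism $A[\tau^{-1}]\cong A_{\cl}\otimes_{\F_2}\F_2[\tau,\tau^{-1}]$, under which $\M_2[\tau^{-1}] = \F_2[\tau,\tau^{-1}]$ corresponds to the module $\F_2\otimes_{\F_2}\F_2[\tau,\tau^{-1}]$. Since $\F_2[\tau,\tau^{-1}]$ is flat (in fact free) over $\F_2$, the complex $P_\bullet[\tau^{-1}]$ is a free resolution of $\F_2[\tau,\tau^{-1}]$ over $A_{\cl}\otimes\F_2[\tau,\tau^{-1}]$, obtained by extension of scalars from a free $A_{\cl}$-resolution of $\F_2$ (term-by-term this is forced by the ring isomorphism and the identification of the augmentations). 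Therefore
\[
H^*\bigl(\Hom_{A[\tau^{-1}]}(P_\bullet[\tau^{-1}],\M_2[\tau^{-1}])\bigr) \;\cong\; \Ext_{A_{\cl}}(\F_2,\F_2)\otimes_{\F_2}\F_2[\tau,\tau^{-1}],
\]
and this chain of identifications is multiplicative because the resolution carries a coproduct structure (or because one can use a DGA resolution) compatible with base change.

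The main obstacle, as I see it, is the careful bookkeeping needed to commute localization past the $\Hom$, since $\Hom$ does not in general commute with filtered colimits when the source is a direct sum. This is exactly why the motivically finite type condition on the resolution is needed, together with Lemma \ref{le:sum=prod}, so that $\Hom_A(P_i,-)$ becomes a finite-type direct sum in each bidegree and therefore commutes with inverting $\tau$. Once that technical point is handled, the argument is a formal consequence of Corollary \ref{cor:theta}.
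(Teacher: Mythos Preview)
Your approach is essentially the same as the paper's: both arguments use flatness of $\tau$-localization together with Corollary~\ref{cor:theta} to reduce to computing $\Ext$ over $A_{\cl}\otimes_{\F_2}\F_2[\tau,\tau^{-1}]$, and then a flat base-change argument to identify this with $\Ext_{A_{\cl}}(\F_2,\F_2)\otimes_{\F_2}\F_2[\tau,\tau^{-1}]$. The paper packages the argument more abstractly, asserting natural isomorphisms of functors such as
\[
\Hom_A(\M_2,\blank)\otimes_{\tilde{\M}_2}\tilde{\M}_2[\tau^{-1}]\;\cong\;\Hom_{A[\tau^{-1}]}\bigl(\M_2[\tau^{-1}],(\blank)\otimes_{\M_2}\M_2[\tau^{-1}]\bigr)
\]
and then passing to derived functors, whereas you work with an explicit motivically finite type resolution and track the localization through the $\Hom$ complex by hand. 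Your care with the finite type hypothesis (via Lemma~\ref{le:sum=prod}) makes explicit the finiteness needed to commute localization past $\Hom$, a point the paper's derived-functor formulation leaves implicit; conversely, the paper's phrasing avoids having to argue that $P_\bullet[\tau^{-1}]$ arises as a base change of an $A_{\cl}$-resolution, since it works directly with $\Ext$ over the intermediate ring $A[\tau^{-1}]$.
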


\begin{proof}
Throughout the proof, we use that $\M_2 \map \M_2[\tau^{-1}]$ and 
$\F_2 \map \F_2[\tau,\tau^{-1}]$ are flat, so that the associated tensor product functors are exact.

There is a natural isomorphism
\[
\Hom_{A_{\cl}}(\F_2, \blank) \otimes_{\F_2} \F_2[\tau,\tau^{-1}] \map 
\Hom_{A_{\cl} \otimes_{\F_2} \F_2[\tau,\tau^{-1}]} 
(\F_2[\tau,\tau^{-1}], (\blank) \otimes_{\F_2} \F_2[\tau,\tau^{-1}]),
\]
so the induced map on derived functors is also an isomorphism.  In particular,
we obtain an isomorphism
\[
\Ext_{A_{\cl}}(\F_2, \F_2) \otimes_{\F_2} \F_2[\tau,\tau^{-1}] \map 
\Ext_{A_{\cl} \otimes_{\F_2} \F_2[\tau,\tau^{-1}]}
(\F_2[\tau,\tau^{-1}], \F_2[\tau,\tau^{-1}]).
\]

Recall from Corollary \ref{cor:theta}
that $A[\tau^{-1}]$ is isomorphic to
$A_{\cl} \otimes_{\F_2} \F_2[\tau,\tau^{-1}]$, so it remains to
show that there is an isomorphism
\[
\Ext_A (\M_2, \M_2) \otimes_{\tilde{\M}_2} \tilde{\M}_2[\tau^{-1}] \map
\Ext_{A[\tau^{-1}]}(\M_2[\tau^{-1}],\M_2[\tau^{-1}]).
\]
As in the previous paragraph, this follows from the natural isomorphism
\[
\Hom_A(\M_2, \blank) \otimes_{\tilde{\M}_2} \tilde{\M}_2[\tau^{-1}] \map 
\Hom_{A[\tau^{-1}]}(\M_2[\tau^{-1}], (\blank) \otimes_{\M_2} \M_2[\tau^{-1}] ).
\]
\end{proof}


\section{Computation of $\Ext_A(\M_2,\M_2)$}

In this section we are
concerned with computing the groups
$\Ext_A^{s,(t+s,u)}(\M_2,\M_2)$, which we abbreviate to $\Ext^{s,(t+s,u)}$.
Here $s$ is the homological degree, and
$(t+s,u)$ is the internal bidegree.  Recall that $t+s$ corresponds to the
usual topological grading in the classical Steenrod algebra, and $u$
is the motivic weight.

A chart showing the motivic $\Ext$ groups is given in
Appendix~\ref{se:motivicext}.  There are several things about the
chart we must explain, however.  First, the reader should view these
pictures as a projection of a three-dimensional grid, where the $u$
axis comes out of the page.  Although our diagrams suppress this third
direction, it is always important to keep track of the weights.  Our
two-dimensional charts are drawn using the classical conventions, so
that $t$ is on the horizontal axis and $s$ is on the vertical axis.  We
refer to $t$ as the \emph{stem} and $s$ as the \emph{Adams
filtration}.  Note that column $t$ of the chart contains groups
related to $\pi_{t,*}$.

Recall from Section \ref{subsctn:ext-compare} that 
$\Ext^{0,(*,*)}$ is the algebra $\tilde{\M}_2$,
i.e., a polynomial algebra over $\F_2$ with one generator $\tau$ in
bidegree $(0,-1)$.  The Yoneda product makes $\Ext^{*,(*,*)}$ into
a graded module over $\Ext^{0,(*,*)}$.  Therefore, $\Ext^{*,(*,*)}$
decomposes into a direct sum of copies of $\tilde{\M}_2$ and 
$\tilde{\M}_2/ \tau^k$.  It turns out that in the range $t-s\leq 34$,
the only value of $k$ for which the latter occurs is $k=1$.  

In our chart,
each black circle represents a copy of $\tilde{\M}_2$.  The small numbers
in the chart indicate the weights of the generators of some of these copies
(the weights of the rest of the generators are implied by the multiplicative
structure).

Each red circle represents a copy of $\tilde{\M}_2 / \tau$.  Note that
this submodule is concentrated in a single weight, which is either
given or implied.  By Proposition \ref{prop:compare-ext}, if we delete
the red circles from our chart, the black circles correspond precisely
to the classical $\Ext$ chart \cite[Appendix 3]{R}.

The results in this section were obtained by computer calculations.
The software computed a resolution for $\M_2$ over $A$, and from this
information extracted $\Ext$ groups and product information in the
usual way.  In the range $t-s \leq 34$, the computations have been checked
by two independent software packages, and have been further
corroborated by work with the May spectral sequence described in
Section~\ref{se:May}.

The software computed a minimal free resolution over the graded ring
$A$.  In the classical case one computes $\Ext$ by applying
$\Hom_A(\blank,\F_2)$, which makes all the differentials vanish.
In the motivic case we are applying $\Hom_A(\blank,\M_2)$, which
leaves powers of $\tau$ inside the differentials.  The software then
computed the cohomology of the resulting complex, regarded as a
complex of modules over $\M_2$.

\subsection{Product structure}

Multiplication by $h_0$ is represented by vertical black lines (and
some blue lines, to be described below).  The black arrow in the
$0$-stem indicates an infinite tower of elements of the form $h_0^k$.

Multiplication by $h_1$ is represented by diagonal black or red lines.
The diagonal red arrows indicate an infinite sequence of copies of
$\tilde{\M}_2 / \tau$ which are connected by $h_1$ multiplications.
In other words, the red arrows represent a family of elements of the
form $h_1^k x$ such that $\tau h_1^k x$ equals zero.
In particular, note that $h_1$ is not nilpotent, although $\tau h_1^4$ is
zero.  This is the first example of non-classical phenomena in the
motivic $\Ext$ groups.

Multiplication by $h_2$ is represented by green lines of slope $1/3$
(and some magenta lines described below).

Inspection of the 3-stem leads us to the next non-classical phenomenon.
The element $h_1^3$ has weight 3, while the element
$h_0^2 h_2$ has weight 2.  Classically these elements are equal, but this
cannot happen here.  Instead we have the relation
\[
h_0^2 h_2 = \tau h_1^3.
\]
Both sides of this equation have weight 2.

The blue vertical lines indicate a multiplication by $h_0$ whose value
is $\tau$ times a generator.  Similarly, a magenta line of slope $1/3$
indicates a multiplication by $h_2$ whose value is 
$\tau$ times a generator.
There are many occurrences of this phenomenon.  One
example is the relation
\[
h_0^2 Ph_2 = \tau h_1^2 Ph_1,
\]
which is implied by Massey products and the relation $h_0^2 h_2 = \tau h_1^3$.
We will discuss this later.
Other interesting examples include
\[
h_0 f_0 = \tau h_1 e_0, 
\hspace{10pt} h_0^2 j = \tau h_1 Pe_0, 
\hspace{10pt} h_0^2 k = \tau h_1 d_0^2, 
\hspace{10pt} h_0^5 r = \tau c_0 Pd_0.
\]

Note the element labelled $[\tau g]$ in the 20-stem with weight 11.
We emphasize that this element is indecomposable, i.e., it is not
divisible by $\tau$ (and the brackets are there to help us remember
this).  The choice of name makes other formulas work out more
consistently, as we shall see later (see also
Remark~\ref{re:why-g}). Similarly, note the indecomposable elements
$[h_2 g]$ and $[h_3 g]$.  We have $\tau[h_2g]=h_2[\tau g]$ and
similarly for $[h_3g]$, although in the latter case both $\tau[h_3g]$
and $h_3[\tau g]$ are zero.

We also observe that $c_0 d_0$ and $c_0 e_0$ are non-zero, even
though their classical versions are zero.  However, they are killed by
$\tau$, which is consistent with Proposition \ref{prop:compare-ext}
(see also Remark \ref{re:h1-stable}).

Finally, note that $h_0^5 r$ equals $\tau c_0 Pd_0$.  The element
$c_0 Pd_0$ supports multiplications by $h_1$, even though $\tau c_0 Pd_0$
cannot since it is divisible by $h_0$.

\begin{remark}
A look at the $\Ext$ chart in Appendix~\ref{se:motivicext} shows a
connection between the new $h_1$-towers (in red) and the spots where
there is a shifted $h_0$ (in blue)---that is, where multiplying $h_0$
by a generator gives $\tau$ times another generator.  This makes a
certain amount of sense: classically, $h_1^4$ is zero {\it because\/}
$h_1^3$ is a multiple of $h_0$, and similarly for $h_1^3Ph_1$,
$h_1^4d_0$, $h_1^2e_0$, etc.  In the motivic world $h_1^3$ is not a
multiple of $h_0$, and so there is no reason for $h_1^4$ to
vanish.  This connection breaks down, though, in the case of
$h_1^4h_4$.  Even though $h_1^3h_4$ is not a multiple of $h_0$ in the
motivic world, $h_1^4h_4$ still vanishes.  The reason for
this is tied to the class $[\tau g]$ in $\Ext^{4,(24,11)}$, and the
fact that it lives in weight $11$ instead of weight $12$---i.e., that
our generator is $[\tau g]$ and not $g$.  This can be explained
rigorously in terms of the May spectral sequence; see
Remark~\ref{re:why-g}.
\end{remark}

\begin{remark}
\label{re:weight0}
In the range $0\leq t-s\leq 34$, the weight $0$
piece of the motivic $E_2$-term coincides exactly with the classical
Adams $E_2$-term.  That is, all the $\tau$-torsion has disappeared by
the time one reaches weight $0$.  This is a reflection of the
principle, mentioned in the introduction, that the stable homotopy
groups $\pi_{n,0}$ are most like the classical stable homotopy groups.
\end{remark}

\subsection{Massey products}
In this section, all Massey products have zero indeterminacy unless
explicitly stated otherwise.

The symbol $P$ is used for the usual periodicity operator 
$\langle h_3, h_0^4, \blank \rangle$, defined on any class that is
killed by $h_0^4$.  Note that $P$ increases weights by $4$.

We can now understand the relation $h_0^2 Ph_2 = \tau h_1^2 P h_1$.
Starting with $\langle h_3, h_0^4, h_2 \rangle h_0^2$,
we shuffle to obtain $\langle h_3, h_0^4, h_0^2 h_2 \rangle$,
which equals $\langle h_3, h_0^4, \tau h_1^3 \rangle$.  By shuffling
again, this simplifies to 
$\langle h_3, h_0^4, h_1 \rangle \tau h_1^2$, which is $\tau h_1^2 Ph_1$.

The comparison between the motivic $\Ext$ groups and the classical
$\Ext$ groups is compatible with Massey products.  This fact can be
used to compute motivic Massey products based on known classical
computations.  
For example, consider the Massey product $\langle h_2^2, h_0, h_1 \rangle$.  
Classically, this Massey product equals $c_0$.  Therefore, the motivic
Massey product must be non-zero.  The Massey product also has weight 5.  
An inspection of
the chart shows that there is just one possible value for this Massey product,
which we have labelled $c_0$.

The method of the previous paragraph allows us to compute the following
motivic Massey products, based on known classical formulas: 
\begin{align*}
c_0 &= \langle h_1,h_0,h_2^2\rangle \\
d_0 &= \langle h_0,h_2^2,h_0,h_2^2\rangle \\
e_0 &= \langle h_1,h_2,c_0,h_2\rangle \\
f_0 &= \langle h_0^2,h_3^2,h_2 \rangle \\
[\tau g] &= \langle h_2,h_1,h_0,h_0h_3^2\rangle \\
k &= \langle h_0^2,h_3^2,d_0\rangle \\
r &= \langle h_0^2,h_3^2,h_3^2,h_0^2\rangle.
\end{align*}
Note that the formula for $f_0$ has indeterminacy $\{ 0, \tau h_1^3 h_4\}$,
just as in the classical case.  
These formulas are printed in \cite{T2}, which gives citations
to original sources.

Something slightly different occurs with the Massey product
$\langle h_0^2,h_3^2,h_1,h_0\rangle$.  The weight here is 9.
Classically, this Massey product equals $e_0$, but the weight of 
motivic $e_0$ is 10.  Therefore, we deduce the motivic formula
\[
\tau e_0 = \langle h_0^2,h_3^2,h_1,h_0\rangle.
\]

Another difference occurs with the Massey products 
$\langle h_0, c_0, d_0 \rangle$ and $\langle h_0, c_0, e_0 \rangle$,
which are classically equal to $i$ and $j$.
Motivically, these Massey products are not defined because
$c_0 d_0$ and $c_0 e_0$ are non-zero.  However, $\tau c_0 d_0$
and $\tau c_0 e_0$ are zero, so the Massey products
$\langle h_0, c_0, \tau d_0 \rangle$ and $\langle h_0, c_0, \tau e_0 \rangle$
are defined.  By comparison to the classical situation again, we deduce
the motivic formulas
\[
i = \langle h_0, c_0, \tau d_0 \rangle, \hspace{10pt}
j = \langle h_0, c_0, \tau e_0 \rangle.
\]
We also have the similar formula $k = \langle h_0, c_0, [\tau g] \rangle$,
but this is slightly different because $[\tau g]$ is not decomposable.

Note that $Pg$ is not defined, since $g$ does not exist.  However,
$P ([\tau g])$ equals $\tau d_0^2$.  We might abuse notation and write
$[Pg]$ as another name for $d_0^2$.

Consider the element $[h_2 g]$ in the 23-stem.  Recall that this element
is indecomposable, but $\tau [h_2 g]=h_2[\tau g]$.
Computer calculations tell us that
\[
[h_2 g] = \langle h_1, h_1^3 h_4, h_2\rangle.
\]
Note that the indeterminacy is zero,
since $[h_2 g]$ is not divisible by $h_2$.
On the other hand, the classical Massey product
$\langle h_1, h_1^3 h_4, h_2\rangle$ has indeterminacy $\{ 0, h_2 g\}$.  

Similarly, we have the formula
\[
[h_2 g] = \langle h_2, h_1^4, h_4 \rangle,
\]
with zero indeterminacy again.  The corresponding classical Massey product
is $\langle h_2, 0, h_4 \rangle$, with indeterminacy $\{ 0, h_2 g \}$.

Computer calculations have verified that the indecomposable $[h_3 g]$
equals $\langle h_3, h_1^4, h_4 \rangle$ and 
$\langle h_1, h_1^3 h_4, h_3 \rangle$.  These formulas have no classical
analogues, since $h_3 g$ equals zero classically.


\section{The motivic May spectral sequence}
\label{se:May}

In this section we explain how one can compute the groups
$\Ext_A^{s,(t+s,u)}(\M_2,\M_2)$ without the aid of a computer.  These
groups can be produced by a variation on the classical May
spectral sequence \cite{Ma}. 

We continue to assume that the ground field $F$ is algebraically
closed, so that $\M_2=\F_2[\tau]$.  Let $I$ be the two-sided ideal of
the motivic Steenrod algebra generated by the $\Sq^i$'s; equivalently,
$I$ is the kernel of algebra projection map $A\ra \M_2$.  

Let $\Gr_I(A)$ denote the associated graded algebra $A/I \oplus I/I^2
\oplus I^2/I^3 \oplus\cdots$.  Note that this is a tri-graded algebra,
with two gradings coming from $A$ and one from the $I$-adic valuation.
We refer to the $I$-adic valuation of an element as its May filtration.
When indexing elements, we will always write the May filtration first.

The motivic May spectral sequence has the form
\[ E_2=\Ext^{s,(a,b,c)}_{\Gr_I(A)}(\M_2,M_2) \Rightarrow
\Ext_A^{s,(b,c)}(\M_2,\M_2).\]
As usual, it can be obtained by filtering the cobar complex by powers
of $I$.  

Let $A_{\cl}$ denote the classical Steenrod algebra over $\F_2$, and
let $I_{\cl}$ be the ideal generated by the $\Sq^i$'s.  
We will show that $\Gr_I(A)$ is easily described in terms of 
$\Gr_{I_{\cl}} (A_{\cl})$.  The $E_2$-terms of the motivic 
and classical May spectral
sequence are very similar, but the differentials are different.

We recall some notation and ideas from \cite{Ma}.  Given a sequence
$R = (r_1, r_2, \ldots)$, let $r_i = \sum_k a_{ik} \cdot 2^k$ be the
2-adic expansion of $r_i$.  Define $v(R)$ to be the integer 
$\sum_{i,k} i a_{ik}$.

The relations in the motivic Steenrod algebra with respect to the Milnor
basis are of the form
$P^R P^S = \sum_X \tau^{u(X)} b(X) P^{T(X)}$.
Here $X$ ranges over certain matrices as described in Theorem 4b of \cite{Mi};
$u(X)$ is a non-negative integer determined by the weights of $P^R$, $P^S$,
and $P^{T(X)}$; and $b(X)$ is a multinomial coefficient depending on $X$.

Recall the Chow weight from Definition \ref{defn:Chow-weight}.
For any Milnor basis element $P^R$, observe that the Chow weight $C(P^R)$ is
equal to the negative of the number of odd integers occurring in $R$; 
this follows
immediately from the formula for bidegrees given in Section \ref{se:milnor}.
Note that $u(X)$ is equal to $C(P^R) + C(P^S) - C(P^{T(X)})$.

\begin{lemma}
\label{lem:u(X)}
Let $P^R P^S = \sum_X \tau^{u(X)} b(X) P^{T(X)}$ be a relation
in the motivic Steenrod algebra with respect to the Milnor basis.
If $b(X) = 1$ and $v(T(X)) = v(R) + v(S)$, then $u(X)$ equals zero.
\end{lemma}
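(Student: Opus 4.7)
The plan is to reduce the statement to a counting argument about the Milnor matrix $X = (x_{ij})_{(i,j)\neq(0,0)}$ that indexes the term $P^{T(X)}$, using the defining relations $r_i = \sum_j 2^j x_{ij}$, $s_j = \sum_i x_{ij}$, and $t_n = \sum_{i+j=n} x_{ij}$. First I would re-express the Chow weights as counts of odd entries of $X$. Since $r_i \equiv x_{i,0} \pmod 2$, the observation from Section~\ref{se:milnor} that $-C(P^R)$ equals the number of odd $r_i$'s gives $-C(P^R) = \#\{i\ge 1 : x_{i,0} \text{ odd}\}$, and likewise $-C(P^S) = \#\{j\ge 1 : s_j \text{ odd}\}$ and $-C(P^{T(X)}) = \#\{n\ge 1 : t_n \text{ odd}\}$. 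So proving $u(X)=0$ reduces to matching these three counts.

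Next I would exploit $b(X)=1$. By Kummer's theorem applied to the multinomial coefficients $\binom{t_n}{x_{0,n},x_{1,n-1},\ldots,x_{n,0}}$ whose product (mod $2$) is $b(X)$, the condition $b(X)=1$ is equivalent to saying that for each $n$ the entries $\{x_{ij} : i+j=n\}$ have pairwise disjoint binary supports. In particular, for each $n$ at most one such $x_{ij}$ is odd, and summation along antidiagonals is carry-free, so $v(T(X)) = \sum_n n\cdot s_2(t_n) = \sum_{i,j}(i+j)\,s_2(x_{ij})$.

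The third step is to turn the hypothesis $v(T(X))=v(R)+v(S)$ into carry-free conditions on rows and columns of $X$. The subadditivity $s_2(\sum_k a_k) \le \sum_k s_2(a_k)$ applied to $r_i=\sum_j 2^j x_{ij}$ and $s_j=\sum_i x_{ij}$ gives $v(R)\le\sum_{i,j} i\,s_2(x_{ij})$ and $v(S)\le\sum_{i,j} j\,s_2(x_{ij})$, so $v(R)+v(S)\le v(T(X))$ with equality iff both inequalities are equalities. Thus the hypothesis forces both the row sums $r_i=\sum_j 2^j x_{ij}$ and the column sums $s_j=\sum_i x_{ij}$ to be carry-free; in particular, for each $j\ge 1$ at most one entry $x_{ij}$ (varying $i$) is odd. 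I expect this to be the main obstacle: recognizing that the May-filtration equality is exactly the carry-free condition on rows and columns.

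Finally I would read off the three counts. Since column sums are carry-free, $s_j$ is odd iff there is a (necessarily unique) $i$ with $x_{ij}$ odd, so $-C(P^S) = \#\{(i,j) : j\ge 1,\ x_{ij}\text{ odd}\}$. Since antidiagonal sums are carry-free, $t_n$ is odd iff there is a unique $(i,j)$ with $i+j=n$ and $x_{ij}$ odd, so $-C(P^{T(X)}) = \#\{(i,j)\ne(0,0) : x_{ij}\text{ odd}\}$. Combining with $-C(P^R) = \#\{i\ge 1 : x_{i,0}\text{ odd}\}$, the disjoint union decomposition according to whether $j=0$ or $j\ge 1$ yields $-C(P^R) - C(P^S) = -C(P^{T(X)})$, and therefore $u(X) = C(P^R)+C(P^S)-C(P^{T(X)}) = 0$.
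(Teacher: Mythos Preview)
Your argument is correct and follows essentially the same route as the paper's proof: express the Chow weights as counts of odd entries, use $b(X)=1$ to get carry-free antidiagonal sums, use $v(T(X))=v(R)+v(S)$ to get carry-free column sums, and combine. The only difference is that where the paper invokes Lemma~2.3 of \cite{Ma} as a black box to obtain $a_{i0}=e_{i00}$ and $b_{j0}=\sum_i e_{ij0}$ as integer equalities, you reprove this content directly via the subadditivity inequality $s_2(\sum a_k)\le\sum s_2(a_k)$ and the observation that $v(R)+v(S)\le\sum_{i,j}(i+j)s_2(x_{ij})=v(T(X))$, so equality forces each row and column sum to be carry-free. Your version is therefore slightly more self-contained, but the two arguments are the same in substance.
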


\begin{proof}
Let $R = (r_1, \ldots)$ and $S = (s_1, \ldots)$.
Let $r_i = \sum_k a_{ik} \cdot 2^k$ and $s_j = \sum_k b_{jk} \cdot 2^k$
be the $2$-adic expansions of $r_i$ and $s_j$.
Then $C(P^R)$ is equal to $-\sum_i a_{i0}$,
and $C(P^S)$ is equal to $-\sum_j b_{j0}$.

Let $X = (x_{ij})$, and let $x_{ij} = \sum_k e_{ijk} \cdot 2^k$ be
the $2$-adic expansion of $x_{ij}$.  Assume that $b(X) = 1$ and that
$v(T(X))$ equals $v(R) + v(S)$.
By Lemma 2.3 of \cite{Ma}, it follows that
$a_{i0} = e_{i00}$ and $b_{j0} = \sum_i e_{ij0}$ (for $j\geq 1$).  Thus,
$C(P^R)+C(P^S)$ equals $-\sum_{i,j\geq 0} e_{ij0}$, which
equals the negative of the number of odd integers in the matrix $X$.

Recall that $T(X) = (t_1, \ldots)$ is determined by $t_n = \sum_{i+j=n} x_{ij}$.
Since $b(X) = 1$,
the definition of $b(X)$ (see Theorem 4b of \cite{Mi}) implies
that at most one value of $x_{ij}$ is odd in each sum $\sum_{i+j=n} x_{ij}$.
Therefore, the number of odd integers in $X$ equals the number of
odd integers in $T(X)$.
In other words, 
$C(P^{T(X)}) = C(P^R) + C(P^S)$.  It follows that $u(X) = 0$.
\end{proof}

\begin{prop}
\label{prop:assoc-graded}
\mbox{}\par
\begin{enumerate}[(a)]
\item 
The tri-graded algebras $\Gr_I(A)$ and
$\Gr_{I_{\cl}}(A_{\cl})\tens_{\F_2} \F_2[\tau]$ are isomorphic.
\item The quadruply-graded rings
$\Ext_{\Gr_I(A)}(\M_2,\M_2)$ and 
$\Ext_{\Gr(A_{cl})}(\F_2,\F_2)\tens_{\F_2} \F_2[\tau]$
are isomorphic.
\end{enumerate}
\end{prop}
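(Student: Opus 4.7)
The plan is to establish part (a) using the Milnor basis as the bridge between $A$ and $A_{\cl}$, and then to deduce part (b) by a flat base change argument.

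For part (a), recall that $A$ has an $\M_2$-basis of Milnor monomials $\{P^R\}$, and $A_{\cl}$ has a corresponding $\F_2$-basis of the same symbols. This yields a natural isomorphism of bigraded $\M_2$-modules
\[
A \cong A_{\cl}\otimes_{\F_2}\F_2[\tau]
\]
sending $P^R$ to $P^R\otimes 1$. The first step is to check that under this identification the $I$-adic filtration on $A$ corresponds to the $I_{\cl}$-adic filtration on $A_{\cl}\otimes_{\F_2}\F_2[\tau]$. The inclusion $P^R\in I^{v(R)}$ follows by expressing $P^R$ explicitly as a product of $v(R)$ Steenrod squares, mimicking the classical construction of \cite{Mi}. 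The reverse direction transports from the analogous classical fact for $A_{\cl}$ via the $\M_2$-module isomorphism above; if needed, Corollary \ref{cor:theta} lets one check that nothing pathological happens after inverting $\tau$.

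Once the filtrations are matched, $\Gr_I(A)$ and $\Gr_{I_{\cl}}(A_{\cl})\otimes_{\F_2}\F_2[\tau]$ agree as $\M_2$-modules, with bases given by the images of the $P^R$. To compare algebra structures, I would examine the Milnor product formula $P^R P^S=\sum_X \tau^{u(X)}b(X)P^{T(X)}$ modulo terms of higher $I$-adic filtration. The terms with $v(T(X))<v(R)+v(S)$ die in the associated graded, while the surviving terms all satisfy $v(T(X))=v(R)+v(S)$ and $b(X)=1$---precisely the hypothesis of Lemma \ref{lem:u(X)}. That lemma forces $u(X)=0$ in every such case, so no power of $\tau$ appears in the structure constants of $\Gr_I(A)$. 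The resulting multiplication matches the classical one extended $\F_2[\tau]$-linearly, proving (a).

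For part (b), write $B=\Gr_{I_{\cl}}(A_{\cl})$. By (a) it suffices to establish
\[
\Ext_{B\otimes_{\F_2}\F_2[\tau]}\bigl(\F_2[\tau],\F_2[\tau]\bigr) \cong \Ext_B(\F_2,\F_2)\otimes_{\F_2}\F_2[\tau].
\]
Since $\F_2\to \F_2[\tau]$ is flat, a bigraded free resolution $P_\bullet\to\F_2$ over $B$, chosen to be finitely generated in each bidegree, yields upon tensoring with $\F_2[\tau]$ a free resolution of $\F_2[\tau]$ over $B\otimes_{\F_2}\F_2[\tau]$. Because $\Hom$ from a finitely generated free module commutes with base extension along a flat algebra, applying $\Hom$ and taking cohomology gives the desired isomorphism of quadruply graded Ext rings. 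The main obstacle is verifying the compatibility of the $I$-adic filtration on $A$ with the Milnor valuation $v$; once this is secured, Lemma \ref{lem:u(X)} delivers the algebra isomorphism of (a) essentially for free, and the flat base change argument for (b) is routine.
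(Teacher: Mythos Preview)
Your approach is essentially identical to the paper's: use the Milnor basis, identify the May filtration of $P^R$ with $v(R)$, invoke Lemma~\ref{lem:u(X)} to see that no powers of $\tau$ survive in the associated graded product, and then deduce (b) from (a) by flatness of $\F_2\to\F_2[\tau]$. One small slip: the terms that die in the associated graded are those with $v(T(X))>v(R)+v(S)$ (they lie in a deeper power of $I$), not $v(T(X))<v(R)+v(S)$; the multiplicativity of the $I$-adic filtration forces $v(T(X))\geq v(R)+v(S)$ for every nonzero term.
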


\begin{proof}
Part (a) follows from Lemma \ref{lem:u(X)}, as in \cite{Ma}.
It turns out that $v(R)$ equals the May filtration of $P^R$.  
The main point is that when we consider the Milnor basis relations
modulo higher May filtration, no coefficients
of $\tau$ appear.

Part (b) follows
formally from part (a), using that $\F_2 \map \F_2[\tau]$
is flat.
\end{proof}

We are interested in the quadruply-graded ring 
$\Ext_{\Gr_I(A)}(\M_2,\M_2)$.  Our convention is to grade an element
$x$ in the form $(m, s, f, w)$, where $m$ is the May filtration,
$s$ is the stem (i.e., the topological degree minus the homological degree),
$f$ is the Adams filtration (i.e., the homological degree), and $w$ is the weight.

The classical ring $\Ext_{\Gr(A_{cl})}(\F_2,\F_2)$ is studied in great detail
in \cite{Ma} and \cite{T1}, including complete information through
the 164-stem.  It can be computed as the cohomology
of the differential graded algebra
$\F_2[h_{ij}\,|\,i>0,j\geq 0]$, with 
differential given by $d(h_{ij})=\sum_{0<k<i} h_{kj}h_{i-k,k+j}$.  
The element $h_{ij}$ is dual to the Milnor basis element 
$P^R$, where $R$ consists of all zeros except for $2^j$ at the $i$th place.

By Proposition \ref{prop:assoc-graded},
the motivic ring
$\Ext_{\Gr_I(A)}(\M_2,\M_2)$ is the cohomology of the differential
graded algebra
$\F_2[\tau,h_{i,j}\,|\,i>0,j\geq 0]$ where:
\begin{enumerate}
\item
$\tau$ has degree $(0,0,0,-1)$.
\item
$h_{i0}$ has degree $(i, 2^i - 2, 1, 2^{i-1} - 1)$.
\item
$h_{ij}$ has degree $(i, 2^j(2^i-1)-1, 1, 2^{j-1}(2^i-1))$ if $j > 0$.
\end{enumerate}
As in the classical case,
these degrees follow from the description of $h_{ij}$ as the dual of a 
specific Milnor basis element.
The differential is given by the same formula as the classical one (which happens to be
homogeneous with respect to the weight), together with $d(\tau)=0$.

The  tables below list the generators and relations 
for $\Ext_{\Gr_I(A)}(\M_2,\M_2)$ in stems less than 36.
By Proposition \ref{prop:assoc-graded} 
this information can be directly lifted from the
classical computation, which is described in \cite[Theorem 1.2]{T1}.

\begin{table}[!htbp]
\caption{Generators for $\Ext_{\Gr_I(A)}(\M_2,\M_2)$}
\begin{center}
\label{table:gen}
\begin{tabular}{|l|l|l|}
\hline
generator & degree & description in terms of $h_{ij}$ \\
\hline
$\tau$  & $(0,0,0,-1)$ & \\
$h_0$ & $(1,0,1,0)$ & $h_{10}$ \\
$h_1$ & $(1,1,1,1)$ & $h_{11}$ \\
$h_2$ & $(1,3,1,2)$ & $h_{12}$ \\
$h_3$ & $(1,7,1,4)$ & $h_{13}$ \\
$h_4$ & $(1,15,1,8)$ & $h_{14}$ \\
$h_5$ & $(1,31,1,16)$ & $h_{15}$ \\
$b_{20}$ & $(4,4,2,2)$ & $h_{20}^2$ \\
$b_{21}$ & $(4,10,2,6)$ & $h_{21}^2$ \\
$b_{22}$ & $(4,22,2,12)$ & $h_{22}^2$ \\
$b_{30}$ & $(6,12,2,6)$ & $h_{30}^2$ \\
$b_{31}$ & $(6,26,2,14)$ & $h_{31}^2$ \\
$b_{40}$ & $(8,28,2,14)$ & $h_{40}^2$ \\
$h_0(1)$ & $(4,7,2,4)$ & $h_{20} h_{21} + h_{11} h_{30}$ \\
$h_1(1)$ & $(4,16,2,9)$ & $h_{21} h_{22} + h_{12} h_{31}$ \\
$h_2(1)$ & $(4,34,2,18)$ & $h_{22} h_{23} + h_{13} h_{32}$ \\
\hline
\end{tabular}
\end{center}
\end{table}

\begin{table}[!htbp]
\caption{Relations for $\Ext_{\Gr_I(A)}(\M_2,\M_2)$}
\begin{center}
\label{table:rel}
\begin{tabular}{|l|l|l|}
\hline
$h_0 h_1=0$ & 
$h_2 h_0(1) = h_0b_{21}$ &
$b_{20} b_{22} = h_0^2 b_{31} + h_3^2 b_{30}$ \\
$h_1 h_2=0$ & 
$h_3 h_0(1) = 0$ &
$b_{20} h_1(1) = h_1 h_3 b_{30}$ \\
$h_2 h_3=0$ &
$h_0 h_1(1) = 0$ & 
$b_{22} h_0(1) = h_0 h_2 b_{31}$ \\
$h_3 h_4=0$ &
$h_3 h_1(1) = h_1 b_{22}$ &
$h_0(1)^2 = b_{20} b_{21} + h_1^2 b_{30}$ \\
$h_2 b_{20} = h_0 h_0(1)$ &
$h_4 h_1(1) = 0$ &
$h_1(1)^2 = b_{21}b_{22} + h_2^2 b_{31}$ \\
$h_3 b_{21} = h_1 h_1(1)$ &
$h_1 h_2(1) = 0$ &
$h_0(1) h_1(1) = 0$ \\
\hline
\end{tabular}
\end{center}
\end{table}

If we were to draw a chart of the
motivic May $E_2$-term, analogous to our chart of the Adams $E_2$-term
in Appendix~\ref{se:motivicext}, it would be entirely in black and
look the same as the classical May $E_2$; each generator would have an
associated weight according to the formulas in Table \ref{table:gen}.
We have chosen not to give this chart
because it is the same as the classical chart and is complicated.

The $d_2$ differentials in the spectral sequence are easy to analyze.  They must
be compatible with the $d_2$ differentials in the classical May spectral sequence,
and they must preserve the weight.  The first time this is interesting
is for $b_{20}$.  Classically one has $d_2(b_{20})=h_1^3+h_0^2h_2$.
Motivically, $b_{20}$ and $h_0^2h_2$ both have weight $2$ whereas
$h_1^3$ has weight $3$.  So the motivic formula must be
$d_2(b_{20})=\tau h_1^3 + h_0^2h_2$.  Note that $h_0^ib_{20}$
kills $h_0^{2+i}h_2$, just as in the classical case, but that
$h_1^ib_{20}$ kills $\tau h_1^{3+i}$ rather than
$h_1^{3+i}$.

The following proposition lists the $d_2$ differentials on all of our generators
with stem less than 36,
modified from \cite[Theorem 2.4]{T1}.

\begin{table}[!htbp]
\label{table:d2}
\caption{$d_2$ differentials in the motivic May spectral sequence}
\begin{center}
\begin{tabular}{|l|l||l|l|}
\hline
$x$ & $d_2(x)$ & $x$ & $d_2(x)$ \\
\hline
$h_0$ & 0 &
$b_{22}$ & $h_3^3 + h_2^2 h_4$ \\
$h_1$ & 0 &
$b_{30}$ & $\tau h_1 b_{21} + h_3 b_{20}$ \\
$h_2$ & 0 &
$b_{31}$ & $h_2 b_{22} + h_4 b_{21}$ \\
$h_3$ & 0 &
$b_{40}$ & $\tau h_1 b_{31} + h_4 b_{30}$ \\
$h_4$ & 0 &
$h_0(1)$ & $h_0 h_2^2$ \\
$h_5$ & 0 &
$h_1(1)$ & $h_1 h_3^2$ \\
$b_{20}$ & $\tau h_1^3 + h_0^2 h_2$ &
$h_2(1)$ & $h_2 h_4^2$ \\
$b_{21}$ & $h_2^3 + h_1^2 h_3$ 
& & \\
\hline
\end{tabular}
\end{center}
\end{table}

Using the above table and the Leibniz rule (together with the
relations listed in Table~\ref{table:rel}), it is a routine but
tedious process to calculate the $E_3$-term.  
As in the classical case, $d_r$ is identically zero for odd $r$ for
dimension reasons.  Therefore, the $E_4$-term is equal to the $E_3$-term.
This is depicted in
Appendix~\ref{fig:mayss} below.  Note that all the differentials in the
May spectral sequence go up one box and to the left one box.

\subsection{$E_4$-term of the motivic May spectral sequence}

We now analyze the $E_4$-term of the motivic May spectral sequence.
As for the $d_2$-differentials, the $d_4$-differentials must be
compatible with the differentials in the classical May spectral sequence.
As shown in \cite{T1}, we have:
\begin{enumerate}
\item
$d_4(b_{20}^2)=h_0^4h_3$.
\item
$d_4(h_2 b_{30})=h_0^2h_3^2$.
\end{enumerate}

At this point, something new occurs.  Classically, $d_4(b_{21}^2)$ is
zero for dimension reasons: $d_2(h_1 h_4 b_{20}) = h_1^4 h_4$ and so
there is nothing in $E_4$ which $b_{21}^2$ could hit.
But motivically
we have $d_2(h_1h_4b_{20})=\tau h_1^4h_4$, and so
$h_1^4h_4$ survives to $E_4$.
Thus it is possible that
$d_4(b_{21}^2) = h_1^4 h_4$.  
Unfortunately, comparison to the classical May spectral sequence
cannot distinguish between the two possible values of $d_4(b_{21}^2)$,
since the classical comparison does not see the $\tau$-torsion.

It turns out that $d_4(b_{21}^2)$ does equal $h_1^4 h_4$.  Before
we can prove this, we must introduce a new tool.

Because $\Gr_I(A)$ is a co-commutative Hopf algebra,
$\Ext_{\Gr(A)}(\M_2,\M_2)$ has algebraic Steenrod
operations on it \cite{Ma2}.  These operations satisfy the following
properties:
\begin{enumerate}
\item
$\Sq^n(xy) = \sum_{i+j = n} \Sq^i(x) \Sq^j(y)$ (Cartan formula).
\item
$\Sq^n(x) = x^2$ if the homological degree of $x$ is $n$.
\item
$\Sq^n(x) = 0$ if the homological degree of $x$ is greater than $n$.
\end{enumerate}

In the $E_2$-term of the classical May spectral sequence, 
one has $\Sq^0(h_i)=h_{i+1}$, $\Sq^0(b_{ij})=b_{i,j+1}$,
and $\Sq^0(h_i(j))=h_{i+1}(j)$.  Recall that $\Sq^0$ preserves the
homological degree, but doubles the internal degree.

Motivically, $\Sq^0$ still preserves the homological degree but
doubles the topological degree and the weight.  By comparison to
the classical situation, we immediately obtain the following
calculations.

\begin{prop}
In $\Ext_{\Gr_I(A)}(\M_2,\M_2)$, the following formulas hold:
\begin{enumerate}[(a)]
\item
$\Sq^0(h_0)=\tau \, h_1$.
\item
$\Sq^0(h_i)=h_{i+1}$ for $i\geq 1$.
\item 
$\Sq^0(b_{i0})=\tau^2\, b_{i1}$.
\item
$\Sq^0(b_{ij})=b_{i,j+1}$ for $j\geq 1$.
\item
$\Sq^0(h_0(S))= \tau h_{1}(S)$ for all $S$.
\item
$\Sq^0(h_i(S))=h_{i+1}(S)$ for all $S$ and all $i \geq 1$.
\end{enumerate}
\end{prop}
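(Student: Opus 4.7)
The plan is to reduce each formula to its classical analogue via Proposition 5.5 and then determine the exponent of $\tau$ by a weight count.

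First I would use Proposition 5.5 to identify $\Ext_{\Gr_I(A)}(\M_2,\M_2)$ with $\Ext_{\Gr(A_{\cl})}(\F_2,\F_2) \otimes_{\F_2} \F_2[\tau]$ as quadruply graded rings. Because $\Gr_I(A)$ is obtained from $\Gr(A_{\cl})$ essentially by base change to $\F_2[\tau]$, the construction of algebraic Steenrod operations from \cite{Ma2} is compatible with this isomorphism: the motivic operations restrict to the classical ones on the $\tau = 1$ quotient. In particular, the classical formulas $\Sq^0(h_i)=h_{i+1}$, $\Sq^0(b_{ij})=b_{i,j+1}$, and $\Sq^0(h_i(S))=h_{i+1}(S)$ force each motivic $\Sq^0$ of a generator to equal $\tau^k$ times the corresponding motivic generator for some $k \geq 0$.

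Next, I would exploit the fact (asserted just before the proposition) that motivically $\Sq^0$ preserves the homological degree and doubles both the topological degree and the weight. If $x$ has weight $w$ and the classical formula gives $\Sq^0(x) = y$, and $y$ viewed as an element of motivic Ext has weight $w'$, then $\tau^k y$ has weight $w' - k$, and matching $w' - k = 2w$ determines $k = w' - 2w$ uniquely.

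Finally, I would just read the weights off Table~\ref{table:gen} and plug in. For (a), $w(h_0)=0$ and $w(h_1)=1$ give $k=1$; for (b), $w(h_i)=2^{i-1}$ and $w(h_{i+1})=2^i$ give $k=0$; for (c), $w(b_{i0})=2^i-2$ and $w(b_{i1})=2^{i+1}-2$ give $k=2$; for (d) with $j\geq 1$, $w(b_{ij})=2^j(2^i-1)$ and $w(b_{i,j+1})=2^{j+1}(2^i-1)$ give $k=0$; and for (e), (f) the sample weights $w(h_0(1))=4$, $w(h_1(1))=9$, $w(h_2(1))=18$ confirm $k=1$ in the $i=0$ case and $k=0$ for $i \geq 1$. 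The only conceptual step is the compatibility of motivic algebraic Steenrod operations with the comparison isomorphism of Proposition 5.5; everything else is arithmetic with weights.
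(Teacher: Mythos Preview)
Your proposal is correct and follows exactly the route the paper takes: the paper simply asserts the classical formulas $\Sq^0(h_i)=h_{i+1}$, $\Sq^0(b_{ij})=b_{i,j+1}$, $\Sq^0(h_i(j))=h_{i+1}(j)$, notes that motivic $\Sq^0$ doubles both topological degree and weight, and then says ``By comparison to the classical situation, we immediately obtain the following calculations.'' Your write-up is in fact more detailed than the paper's, spelling out the weight arithmetic that fixes the exponent of $\tau$ in each case.
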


We will use these facts to continue our analysis of differentials.

\begin{prop}
\label{pr:g}
$d_4(b_{21}^2)= h_1^4h_4$.
\end{prop}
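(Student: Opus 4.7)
The plan is to deduce this $d_4$-differential from the previously established formula $d_4(b_{20}^2) = h_0^4 h_3$ by applying the algebraic Steenrod operation $\Sq^0$. Using the Cartan formula together with the tabulated values $\Sq^0(b_{20}) = \tau^2 b_{21}$, $\Sq^0(h_0) = \tau h_1$ and $\Sq^0(h_3) = h_4$, one computes
\[
\Sq^0(b_{20}^2) = (\tau^2 b_{21})^2 = \tau^4 b_{21}^2, \qquad \Sq^0(h_0^4 h_3) = (\tau h_1)^4 h_4 = \tau^4 h_1^4 h_4.
\]
I would then invoke the standard compatibility of algebraic Steenrod operations with the cobar spectral sequence of a co-commutative Hopf algebra from \cite{Ma2}, which in particular says that $\Sq^0$ commutes with every $d_r$. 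Combined with the $\tau$-linearity of $d_4$, this yields
\[
\tau^4 \, d_4(b_{21}^2) = d_4(\tau^4 b_{21}^2) = \Sq^0\bigl(d_4(b_{20}^2)\bigr) = \tau^4 h_1^4 h_4.
\]

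A short check against Table~\ref{table:gen} shows that $h_1^4 h_4$ is the unique nonzero monomial on $E_2$ in the receiving tri-degree $(m,s,f,w) = (5,19,5,12)$ of $d_4(b_{21}^2)$: the condition $m = f$ forces the element to be a product of $h_i$'s (and powers of $\tau$, but those are ruled out by the weight), and the arithmetic combined with the relations $h_0 h_1 = h_1 h_2 = h_2 h_3 = 0$ eliminates every other candidate. So $d_4(b_{21}^2) \in \{0, h_1^4 h_4\}$, and it remains only to exclude the former.

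The main obstacle is that $\tau h_1^4 h_4$ already vanishes on $E_3$, because Leibniz and $h_1 h_2 = 0$ give
\[
d_2(h_1 h_4 b_{20}) = h_1 h_4 (\tau h_1^3 + h_0^2 h_2) = \tau h_1^4 h_4,
\]
so the identity $\tau^4 d_4(b_{21}^2) = \tau^4 h_1^4 h_4$ above collapses to $0 = 0$ on $E_4$ and is, taken in isolation, vacuous. To extract the genuine content I would lift the $\Sq^0$-commutation one level down, to the filtered cobar cochain complex, where $\tau$ acts as a non-zero divisor. At that level the identity refines to
\[
\tau^4 \, d(b_{21}^2) \equiv \tau^4 h_1^4 h_4 \pmod{\text{strictly lower May filtration}},
\]
and cancellation of $\tau^4$ is legitimate before passing to cohomology. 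Reading off the leading May-filtration term in the resulting congruence $d(b_{21}^2) \equiv h_1^4 h_4$ then gives $d_4(b_{21}^2) = h_1^4 h_4$, as claimed.
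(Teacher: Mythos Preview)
Your argument has a genuine gap at the step where you invoke ``the standard compatibility of algebraic Steenrod operations with the cobar spectral sequence \ldots which in particular says that $\Sq^0$ commutes with every $d_r$.''  No such statement is proved in \cite{Ma2}, and in fact it is not true in the May spectral sequence.  The correct compatibility (Nakamura's formula) says that $\Sq^i d_r = d_{2r}\Sq^i$, so applying $\Sq^0$ to $d_4(b_{20}^2)=h_0^4h_3$ only yields $d_{8}(\tau^4 b_{21}^2)=\tau^4 h_1^4 h_4$.  Since, as you yourself observe, $\tau h_1^4h_4$ already vanishes on $E_3$, this $d_8$-identity is vacuous and tells you nothing about $d_4(b_{21}^2)$.

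Your attempted repair at the cochain level does not close the gap either.  Even if one arranges $\Sq^0$ as a filtration-preserving chain map on the cobar complex, applying it to a representative $z$ of $b_{20}^2$ produces a cochain $\Sq^0(z)$ whose leading term is $\tau^4 b_{21}^2$; it does \emph{not} produce $\tau^4 w$ for a chosen representative $w$ of $b_{21}^2$.  The two cochains $\Sq^0(z)$ and $\tau^4 w$ agree only in the associated graded; their difference lies in an adjacent filtration level, and $d$ of that difference can contribute a term in exactly the same filtration as $\tau^4 h_1^4 h_4$.  Tracking such correction terms is precisely what the spectral-sequence formalism does, and doing it honestly reproduces the $d_{2r}$-shift of Nakamura's formula rather than the $d_r$-commutation you want.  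So the cancellation of $\tau^4$ ``before passing to cohomology'' is illusory.

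The paper proceeds differently and avoids this issue entirely.  Instead of manipulating May differentials, it works directly in the target $\Ext_A(\M_2,\M_2)$: the relation $h_2^3+h_1^2h_3=0$ holds there (it is the image of $d_2(b_{21})$), and applying $\Sq^2$ to this relation via the Cartan formula gives $h_1^4h_4=0$ in $\Ext_A$.  Hence $h_1^4h_4$ must die somewhere in the May spectral sequence; your own degree analysis shows the only possible source is $b_{21}^2$ via a $d_4$, and the result follows.
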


\begin{proof}
We argue that $h_1^4h_4$ must be zero in $\Ext_{A}(\M_2,\M_2)$;
the only way it can die is for $b_{21}^2$ to hit it, and this must
happen at $d_4$ because the May filtrations of $b_{21}^2$ and
$h_1^4h_4$ are $8$ and $5$, respectively.

To see that $h_1^4h_4$ is zero in $\Ext$ over $A$, note first
that $h_2^3+h_1^2h_3$ is zero because $b_{21}$
kills it.  Now compute that $\Sq^2(h_2^3+h_1^2h_3) = h_1^4 h_4$ 
using the facts listed above,
and therefore this class must vanish. 
\end{proof}

\begin{remark}
\label{re:why-g}
We can now explain our notation $[\tau g]$ for the element in
$\Ext_A^{4,(24,11)}(\M_2,\M_2)$ (see Appendix~\ref{se:motivicext}).  The
name $g$ in some sense rightfully belongs to $b_{21}^2$.
Classically this class survives the May spectral sequence, but
motivically it does not.  Motivically, only $\tau b_{21}^2$ survives,
and this gives the generator $[\tau g]$ of $\Ext^{4,(24,11)}$.  

Even though $b_{21}^2$ does not survive the May spectral sequence,
certain multiples of it do survive.  For instance, both $h_2b_{21}^2$
and $h_3b_{21}^2$ survive.  These yield the elements of
$\Ext_A(\M_2,\M_2)$ we called $[h_2g]$ and
$[h_3g]$.
\end{remark}

Analyzing the May filtration now shows that there are no further
differentials until $E_8$, where we have $d_8(b_{20}^4)=h_0^8h_4$,
just as happens classically.  Then $E_8=E_\infty$ through the 20-stem,
and we have computed $\Ext$ over
the motivic Steenrod algebra through the 20-stem, in exact agreement
with the computer calculations.

We leave it to the interested reader to continue this analysis into
higher stems.

\begin{remark}
\label{re:h1-annihilate}
In $\Ext_A(\M_2,\M_2)$ one has $\tau h_1^4=0$; therefore
$\tau h_1^4h_n$ is zero for all $n$.  Classically,
$h_1^4h_n$ is actually {\it zero\/} for all $n$, and we have seen
that motivically this is also true when $n\leq 4$.  It is not true
that $h_1^4h_n=0$ for $n\geq 5$, however.

We do know that $h_1^{2^{n-2}}h_n=0$ for $n\geq 4$.  To
see this, take $h_1^4h_4=0$ and apply $\Sq^4$ to it to derive
$h_1^8h_5=0$; then apply $\Sq^{8}$, $\Sq^{16}$, and so on, to
inductively derive the relations for all $n$.   
In terms of our $\Ext$-chart, what this means is that for $n\geq 5$
the element $h_n$ admits
a ladder of multiplication-by-$h_1$'s which turns red at the fourth
rung ($\tau h_1^4h_n=0$), and which stops completely at the
$2^{n-2}$nd rung. It is not clear whether the ladder stops {\it
before\/} this, but we speculate that it does not.   
\end{remark}

\begin{remark}
\label{re:h1-stable}
One can consider $h_1$-stable groups associated to the motivic May 
spectral sequence.  Namely, consider the colimit of the sequence
\[
\Ext_{\Gr_I(A)}^{s,(a,b,c)} \map
\Ext_{\Gr_I(A)}^{s+1,(a+1,b+2,c+1)} \map
\Ext_{\Gr_I(A)}^{s+2,(a+2,b+4,c+2)} \map \cdots
\]
where the maps are multiplication by $h_1$.  One obtains a 
tri-graded spectral sequence which converges to information about
which elements of $\Ext_A$ support infinitely many multiplications
by $h_1$.

Calculations with the $h_1$-stable groups are much simpler than
calculations with the full motivic May spectral sequence.  For example,
one can show easily that:
\begin{enumerate}
\item
If $x$ supports infinitely many multiplications by $h_1$, then so does
$Px$.
\item
$c_0 d_0$ and $c_0 e_0$ both support infinitely many multiplications
by $h_1$.  In particular, they are non-zero.
\end{enumerate}
One consequence of the first observation is that $h_1^i P^k h_1$ is non-zero
for all $i$ and $k$.
Using that $(P^k h_1)^n = h_1^{n-1} P^{kn} h_1$,
it follows that $P^k h_1$ is not a nilpotent element in motivic stable
homotopy.
\end{remark}

\subsection{Future directions}

We close this portion of the paper with some unresolved problems.

\begin{ques}
Compute the ring $\Ext(\M_2,\M_2)[h_1^{-1}]$.
\end{ques}

Some elements, like $c_0$, survive in this localization.  Other elements,
like $h_4$, are killed.  Some elements, like $h_4 c_0$, are killed by 
different powers of $h_1$ motivically and classically.
See Remark \ref{re:h1-stable} for one possible approach to this problem.

\begin{ques}
For each $i$, determine the smallest $k$ such that $h_1^k h_i = 0$.
\end{ques}

For $h_2$, $h_3$, and $h_4$, the motivic and classical answers are the same.
Computer calculations show that $h_1^8 h_5 = 0$ but $h_1^7 h_5$ is non-zero. 
This is different than the classical situation, where $h_1^4 h_5$ is zero
but $h_1^3 h_5$ is non-zero.  For more about this, see 
Remark~\ref{re:h1-annihilate}.

\begin{ques}
Find copies of $\M_2/\tau^k$ for $k > 1$ in $\Ext(\M_2,\M_2)$.
\end{ques}

Further computer calculations show that a copy of $\M_2/\tau^2$ appears
in the 40-stem.  
Computer calculations have also spotted a copy of $\M_2/\tau^3$.


\section{Differentials, and an application to classical topology}
\label{se:diffs}

By comparison to the classical Adams spectral sequence, we immediately
obtain the differentials
\begin{align*}
d_2(h_4) & = h_0 h_3^2 \\
d_3(h_0 h_4) & = h_0 d_0 \\
d_3(h_0^2 h_4) & = h_0^2 d_0 \\
d_2(e_0) & = h_1^2 d_0 \\
d_2(f_0) & = h_0^2 e_0 \\
d_2(h_1 e_0) & = h_1^3 d_0.
\end{align*}
Through the 18-stem these are the only non-zero differentials.

The multiplicative structure implies that 
we have an infinite family of new differentials of the form
\[
d_2( h_1^k e_0 ) = h_1^{k+2} d_0.
\]

Weight considerations allow for the possibility that $d_6(c_1)$ equals
$h_1^2 Pc_0$.  The proof that this doesn't happen involves two steps.
First, $h_1^4Pc_0$ survives the spectral sequence because it cannot be
hit by anything: the class has weight $13$ and there is nothing of
weight $13$ in column $21$.  
Then $h_1^2Pc_0$ cannot be hit by $c_1$ because the latter is
annihilated by $h_1^2$ whereas the former is not.

There is also the possibility of an exotic differential
$d_5(h_2e_0)=h_1^3Pc_0$.  
Note that $h_2e_0\neq h_0[\tau g]$ (they
live in different weights) and so we may not argue that this $d_5$
vanishes using $h_0$-linearity as one would do classically.  
However, as in the previous paragraph, $h_2 e_0$ is annihilated by
$h_1$, while $h_1^3 Pc_0$ is not.

A similar analysis confirms that $[h_2g]$ does not hit $h_1^5P^2h_1$
via a $d_9$.  

The next possible $d_2$ differential is
\[
d_2( c_0 e_0 ) = h_1^2 c_0 d_0,
\]
which follows again from the multiplicative structure.

The class $h_1 h_4 c_0$ is a product of two permanent cycles $h_1 h_4$
and $c_0$, so it is a permanent cycle.  In particular, $d_3( h_1 h_4 c_0)$
equals zero, not $h_1 c_0 d_0$.  It follows that $h_4 c_0$ is also
a permanent cycle.

We now conclude that $c_0 d_0$ and $h_1 c_0 d_0$ are permanent cycles.
The class $h_1^2 h_4 c_0$ also survives to $E_\infty$, as do the
classes $h_1 [h_3 g]$ and $h_1^2 [h_3 g]$.  These are `exotic'
permanent cycles, in the sense that one does not see them in the
classical Adams spectral sequence.  In the
next sections we will remark on what this tells us about motivic stable
homotopy groups, but this will depend on convergence questions about
the spectral sequence and also on an analysis of some hidden
multiplicative $\tau$-extensions.  See Remark~\ref{re:summary}.

This essentially determines all differentials in the spectral sequence
until the $27$-stem, where one is faced with the possibility of a
$d_2$ or even a $d_7$ on the class $[h_3g]$.  The techniques we have
described so far do not let us compute this differential. 
The analysis of differentials for $t-s\leq 34$ we will be completed in
Section~\ref{se:motAN}, using information
coming from the motivic Adams-Novikov spectral sequence.

\begin{remark}
\label{re:genF}
Suppose now that the ground field $F$ is not algebraically closed.
In this case the ring $\M_2$
(the motivic cohomology of a point) is more complicated, but still
known.  The work in
\cite{V1} shows that $\M_2^{p,q}$ is nonzero only in the range
$0\leq p\leq q$, that $\M_2^{p,p}\iso K^M_p(F)/2$ (the mod $2$ Milnor
$K$-theory of $F$), that
$\M_2^{0,1}=\Z/2.\langle \tau \rangle$, and that the multiplication by
$\tau^i$ maps $\M_2^{p,p} \ra \M_2^{p,p+i}$ are isomorphisms for all
$p$ and all $i\geq 0$.  The ring $\M_2$ is generated by $\M_2^{1,1}$
and $\tau$.

It is not hard to show from this that if $F$ contains a square root of
$-1$ then the action of the Steenrod operations on $\M_2$ is trivial.  If
we let $A$ denote the motivic Steenrod algebra over $F$ and $\Abar$
the motivic Steenrod algebra over the algebraic closure, then $A\iso
\M_2\tens_{\F_2[\tau]} \Abar$.  It follows by a change-of-rings argument
that 
\[ \Ext_A(\M_2,\M_2)\iso
\Ext_{\Abar}(\F_2[\tau],\F_2[\tau])\tens_{\F_2[\tau]} \M_2.
\]  
This makes for a nice picture of the Adams $E_2$-term---it looks the
same as in Appendix~\ref{se:motivicext}, but now every copy of $\M_2$
consists of groups which not only extend down into the page (via
multiplication by elements of $\M_2^{0,1}$) but also down and to the
left (via multiplication by elements of $\M_2^{1,1}$).  This opens up
the possibility for new differentials which take a generator to some
product of a generator with an element of $\M_2^{p,q}$.  We do not
know if any of these exotic differentials actually exist.
\end{remark}

\subsection{A new tool for classical differentials}
\label{se:top1}
The comparison between the motivic Adams spectral sequence and the
classical Adams spectral sequence provides a new tool for studying
classical differentials.  The point is that the weight gives a simple
method for determining that certain classical differentials must
vanish.

We illustrate this with a basic example of $h_1 h_4$ in the
16-stem. Motivically, this element has weight 9.  Considering elements
of weight 9 in the 15-stem, we see that the only possible motivic
differential is $d_3( h_1 h_4) = h_1 d_0$.  It follows that the only
possible classical differential on $h_1 h_4$ is $d_3(h_1 h_4) = h_1
d_0$.  

The same argument yields the following, as well as many other results
similar to it:

\begin{prop}
\label{pr:h1hj}
In the classical Adams spectral sequence there are no differentials
of the form $d_r(h_1h_j)=h_0^{n}h_j$, for any $j$ and $r$.  
\end{prop}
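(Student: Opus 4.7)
My approach is to generalize the weight argument used for $h_1 h_4$ in the preceding paragraph. The key input is the tridegree of $h_j$, which I read off from the May generators: for $j\ge 1$, $h_j = h_{1,j}$ has stem $2^j-1$, Adams filtration $1$, and weight $2^{j-1}$, while $h_0$ has weight $0$. Consequently $h_1 h_j$ sits in weight $2^{j-1}+1$ and $h_0^n h_j$ sits in weight $2^{j-1}$. Since the motivic Adams differential $d_r \colon E_r^{s,t,u} \to E_r^{s+r,t-1,u}$ preserves the weight $u$, a motivic differential out of $h_1 h_j$ must land in weight $2^{j-1}+1$, strictly larger than the weight of $h_0^n h_j$. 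In particular the motivic differential $d_r(h_1 h_j) = h_0^n h_j$ is impossible on weight grounds alone.

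Next I would upgrade this to a statement about the classical spectral sequence by comparison. The classical differential $d_r^{\cl}(h_1 h_j)$ equals the image of $d_r^{\mathrm{mot}}(h_1 h_j)$ under the map of spectral sequences induced by topological realization, so it suffices to show that no class in motivic $E_r$ of stem $2^j-1$, Adams filtration $n+1$, and weight $2^{j-1}+1$ can project to $h_0^n h_j$ classically. Using Proposition \ref{prop:compare-ext} (and the direct sum decomposition summarized in the discussion of the $\Ext$ chart), the motivic $\Ext$ group at the relevant tridegree splits as a sum of pieces of $\tilde{\M}_2$ and $\tilde{\M}_2/\tau^k$ summands; the $\tau$-torsion summands map to zero classically, while the free summand associated to $h_0^n h_j$ has basis $\{\tau^k h_0^n h_j\}_{k\ge 0}$, whose weights $2^{j-1}-k$ are all at most $2^{j-1}<2^{j-1}+1$. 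So no motivic class of the required weight projects to classical $h_0^n h_j$.

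Putting these together, the classical image of $d_r^{\mathrm{mot}}(h_1 h_j)$ cannot be $h_0^n h_j$, which is exactly the desired conclusion. The only mild obstacle is bookkeeping: one has to be sure that the comparison map $\Ext_A(\M_2,\M_2) \to \Ext_{A_{\cl}}(\F_2,\F_2)$ on free summands really is the quotient by $\tau-1$ in the sense required, and that this compatibility persists on each $E_r$-page. Both are built into the map of spectral sequences constructed from the topological realization of the motivic Adams tower, so the argument goes through without difficulty.
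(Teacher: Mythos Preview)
Your proof is correct and is exactly the approach the paper has in mind. The paper gives no proof beyond ``the same argument yields the following,'' referring to the $h_1 h_4$ example; your first paragraph spells out that weight computation for general $j$, and your second and third paragraphs make explicit the comparison step that the paper leaves entirely implicit. If anything, you are being more careful than the paper, which simply asserts that constraints on the motivic differential translate directly into constraints on the classical one.
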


The above result is already known, of course (in fact it is known that
$h_1h_j$ is a permanent cycle, which is stronger).  Still, the
simplicity of our motivic proof is appealing.

The most famous question about the vanishing of differentials in the
classical Adams spectral sequence is the Kervaire problem, which asks
whether all differentials vanish on the classes $h_i^2$.  One might
hope that our weight arguments say something nontrivial about this
problem, but unfortunately this doesn't seem to be the case.  
For example, consider the element $h_4^2$ in the 30-stem.  This element 
has weight 16.  But the elements $k$, $h_0 k$, and $h_0^2 k$ in the
29-stem all have weight 16, so we learn nothing by considering the weights.
Although it is not shown on our chart, it turns
out that something similar happens with $h_5^2$ in the 62-stem,
where $h_5^2$ has weight 32 and so do almost all of the generators in the
61-stem.


\section{Convergence issues}
\label{se:conv}
The evident vanishing line in the motivic Adams spectral sequence
shows that it is certainly converging to {\it something\/}; but we
would like to give this `something' an appropriate homological name.
In this section we will prove that the spectral sequence converges to
the homotopy groups of the $H$-nilpotent completion of the motivic
sphere spectrum $S$.  This is
almost by definition, but not quite---one must be careful of details.

\medskip

\subsection{Background}
Below we will set up the homological Adams spectral sequence in the
usual way, using the geometric cobar resolution.  But in order for
this to work we need to know $H_{*,*}(H)$ and, more generally,
$H_{*,*}(H\Smash H\Smash \cdots \Smash H)$.  The former is essentially
calculated by Voevodsky in \cite{V2,V3}, but does not appear
explicitly.  For the latter, there is no general K\"unneth theorem for
motivic homology and cohomology.  So in both cases we must work a
little to derive the necessary results.

First recall that since $H$ is a ring spectrum, there is a canonical
map $\theta_X\colon H_{**}(X) \ra \Hom_{\M_2}(H^{**}(X),\M_2)$ for any motivic
spectrum $X$.  Given a homology element $S^{p,q}\ra H\Smash X$ and a
cohomology element $X\ra S^{s,t}\Smash H$, one forms the composite
\[ S^{p,q}\ra H\Smash X \ra H \Smash S^{s,t}\Smash H \ra S^{s,t}\Smash
H\Smash H \ra S^{s,t} \Smash H
\]
to get an element of $\pi_{p-s,q-t}(H)$.  This pairing is
$\M_2$-linear in the cohomology class, and so induces the desired map.

\begin{prop}
\label{pr:HH}
The map $\theta_H\colon H_{**}(H) \ra \Hom_{\M_2}(H^{**}(H),\M_2)$ is
an isomorphism.
\end{prop}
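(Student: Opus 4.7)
The plan is to identify both sides of $\theta_H$ via a pair of dual bases. On the cohomology side, $A = H^{**}(H)$ is a free $\M_2$-module on the admissible monomials (cited from \cite{S,MT}), so in each fixed bidegree $\Hom_{\M_2}(A,\M_2)$ is a direct product of shifts of $\M_2$ indexed by admissibles. The first preliminary step is to verify that the admissible monomials form a motivically finite type index set of bidegrees; this is immediate from the classical observation that there are only finitely many admissibles of any bounded topological degree, so that in each bidegree only finitely many summands of the dual contribute. On the homology side, one wants to produce a basis of $H_{**}(H) = \pi_{**}(H\Smash H)$ dual to the admissibles under the Kronecker pairing defining $\theta_H$.

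The main step is to construct a weak equivalence
\[
\phi: H\Smash H \we \bigWedge_\alpha \Sigma^{n_\alpha} H,
\]
where $\alpha$ ranges over the admissibles. For each admissible $\alpha$, regarded as a map $\alpha:H\to \Sigma^{n_\alpha}H$, define the $\alpha$-component of $\phi$ as the composite $H\Smash H \xrightarrow{\alpha\Smash 1} \Sigma^{n_\alpha}H\Smash H \xrightarrow{\mu} \Sigma^{n_\alpha}H$ using the ring multiplication $\mu$ of $H$. These components assemble into a map to the product, and the motivically finite type condition lets us pass from the product to the wedge just as in Lemma~\ref{le:sum=prod}. The map $\phi$ is an $H$-module map with respect to the right $H$-structure on $H\Smash H$.

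To show $\phi$ is a weak equivalence, I would apply mod $2$ motivic cohomology and check the induced map is an isomorphism. Using Lemma~\ref{le:sum=prod}, $H^{**}$ of the target is the free $A$-module on the admissibles with the correct bidegree shifts. For the source, the Cartan formula and the definition of $\phi$ identify its cohomology with the same free $A$-module, generated by the classes $\alpha \in A$. One can then conclude by passing to $\pi_{**}$: the splitting yields $H_{**}(H) \iso \bigoplus_\alpha \Sigma^{n_\alpha}\M_2$, and unwinding the construction of $\phi$ shows that the basis element of $H_{**}(H)$ corresponding to $\alpha$ pairs with $\beta\in A$ to give $\delta_{\alpha\beta}$. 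Hence $\theta_H$ carries this basis to the dual of the admissible basis in $\Hom_{\M_2}(A,\M_2)$, and by the motivically finite type condition this dual basis is a basis bidegreewise. Thus $\theta_H$ is an isomorphism.

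The hard part is justifying the cohomology computation for $H\Smash H$, since no general K\"unneth formula is available motivically. The argument must proceed either by building $\phi$ directly as a map of $H$-modules and exploiting the identification $A \iso H^{**}(H)$, or by the $\tau$-inversion comparison of Lemma~\ref{lemma:triangulated-theta} together with an argument at the weight-zero slice to reduce to the corresponding classical splitting of $H_{\cl}\Smash H_{\cl}$. Once $\phi$ is known to be an equivalence, the rest of the proof is purely a matter of bookkeeping dual bases.
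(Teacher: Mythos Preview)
Your overall strategy matches the paper's: reduce to showing that $H\Smash H$ splits as a motivically finite type wedge of suspensions of $H$, and then observe that the dual pairing is an isomorphism for such wedges.  You have also correctly located the hard step.  However, the gap you acknowledge at the end is genuine and your suggested workarounds do not close it.

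The problem is that neither of your two proposed methods for showing $\phi$ is an equivalence actually works.  First, checking that $\phi$ induces an isomorphism on $H^{**}(\blank)$ is not enough to conclude it is a weak equivalence of motivic spectra: that implication requires some form of cellularity for both source and target, and the cellularity of $H$ (hence of $H\Smash H$) is precisely what is unavailable---the paper explicitly avoids assuming it.  Moreover, you cannot even compute $H^{**}(H\Smash H)$ without a K\"unneth theorem, so the input to that argument is missing.  Second, the $\tau$-inversion comparison of Lemma~\ref{lemma:triangulated-theta} only applies to spectra in the triangulated subcategory generated by spheres and motivically finite type wedges of $H$; you do not know $H\Smash H$ lies there without already knowing the splitting.

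What the paper actually does is import a deep result of Voevodsky.  One writes $H$ as a homotopy colimit of desuspensions of $\Sigma^\infty K(\Z/2(n),2n)$, so $H\Smash H$ is a homotopy colimit (in $H$-modules) of the spectra $\Sigma^{-2n,-n}H\Smash \Sigma^\infty K(\Z/2(n),2n)$.  Voevodsky's theorem \cite[Thm.~4.20(2)]{V3} says each $H\Smash \Sigma^\infty K(\Z/2(n),2n)$ is a proper Tate motive of weight $\geq n$, and \cite[Prop.~3.65]{V3} shows the colimit is then a proper Tate motive of weight $\geq 0$, i.e., a wedge of suspensions of $H$.  The indexing is then read off from $A=H^{**}(H)$.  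This geometric input is what replaces the unavailable K\"unneth argument; your proposal does not supply an alternative.
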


Since $A=H^{**}(H)$ this
identifies $H_{**}(H)$ with the dual steenrod algebra $A_*$ as
computed by Voevodsky in \cite{V2}.  

To prove the above proposition we work in the category of
$H$-modules.  
By \cite[Thm. 1]{RO}, the homotopy category of $H\Z$-modules is
equivalent to Voevodsky's triangulated category of motives.  A similar
result holds for $H$-modules, where one works with motives having
coefficients in $\F_2$.  A \dfn{Tate motive}, under this equivalence,
is an $H$-module equivalent to a wedge of modules of the form
$\Sigma^{p,q}H$.  A \mdfn{proper Tate motive of weight $\geq n$}, 
in the sense of \cite[Def. 3.49]{V3}, is a Tate motive equivalent to a wedge of
modules of the form $\Sigma^{p,q}H$ where $p\geq 2q$ and $q\geq n$.  The result
\cite[Thm. 4.20(2)]{V3} states that the spectra $H\Smash \Sigma^\infty
K(\Z/2(n),2n)$ are proper Tate motives of weight $\geq n$.  From this
we deduce the following:

\begin{lemma}
\label{le:HH=Tate}
As a left $H$-module, $H\Smash H$ is equivalent to a motivically finite type wedge of
$H$ (with one summand for every admissible monomial
in the motivic Steenrod algebra).  In particular, $\pi_{**}(H\Smash H)$ is free
as an $\M_2$-module.  
\end{lemma}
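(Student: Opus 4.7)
The plan is to combine Voevodsky's structural theorem for $H \Smash \Sigma^\infty K(\Z/2(n), 2n)$, already recalled in the text, with the presentation of $H$ as an $\Omega$-spectrum built from those Eilenberg--Mac\,Lane spaces. First I would write
$$H \simeq \hocolim_n \Sigma^{-2n,-n}\Sigma^\infty K(\Z/2(n), 2n),$$
which is just the usual expression of a motivic $T$-spectrum as the homotopy colimit of the desuspensions of its levels (with $T = S^{2,1}$). Since smashing with $H$ preserves homotopy colimits,
$$H \Smash H \simeq \hocolim_n \Sigma^{-2n,-n}\bigl(H \Smash \Sigma^\infty K(\Z/2(n), 2n)\bigr).$$

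Next, by \cite[Thm.~4.20(2)]{V3} each term $H \Smash \Sigma^\infty K(\Z/2(n), 2n)$ is a proper Tate motive of weight $\geq n$, i.e.\ equivalent as an $H$-module to a wedge of shifted copies $\Sigma^{p,q}H$ with $p \geq 2q$ and $q \geq n$. After the shift $\Sigma^{-2n,-n}$, every such summand has non-negative weight and non-positive Chow weight $2q-p$. The transition maps in the colimit are those coming from the structure maps of the $\Omega$-spectrum $H$, and at the level of wedge-summand indices they correspond to prepending a $\Sq^0$ (equivalently, the stabilization map on the cell structure). Passing to the colimit yields an $H$-module wedge $\bigvee_\alpha \Sigma^{n_\alpha,w_\alpha}H$.

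To identify the indexing set with admissible monomials, I would compare via $H^{\ast,\ast}(\blank)$. Each wedge summand contributes a free rank-one piece to the cohomology, so the $\M_2$-module $H^{\ast,\ast}(H\Smash H)$ must have a basis indexed by the $(n_\alpha,w_\alpha)$'s; but this cohomology is computed by Voevodsky to be $A$, whose $\M_2$-basis is precisely the admissible monomials \cite{S}, \cite{MT}. Matching bidegrees forces the indexing set to be the admissible monomials, with each $\Sq^I$ contributing one summand in its own bidegree. The motivically finite type condition then follows from the fact that an admissible monomial $\Sq^{i_1}\cdots \Sq^{i_k}$ has topological degree $\sum i_\ell$ and Chow weight $-\#\{\ell : i_\ell \text{ odd}\}$, so only finitely many admissible monomials satisfy both a topological-degree and a Chow-weight bound. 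The freeness of $\pi_{\ast,\ast}(H\Smash H)$ over $\M_2$ is then immediate: Lemma~\ref{le:sum=prod} converts wedges into products bidegree-by-bidegree, giving
$$\pi_{\ast,\ast}(H \Smash H) = \bigoplus_{I \text{ admissible}} \Sigma^{n_I,w_I}\M_2.$$

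The main obstacle will be the bookkeeping in the homotopy colimit: one must verify that no summands get collapsed, that the wedge decompositions can be chosen compatibly with the transition maps, and that the direct system of indexing sets really does stabilize to the admissible monomials. The cleanest way I see to handle this is to work with $H$-module cellular structures, noting that Voevodsky's theorem furnishes such cells at each finite level and that the $\Omega$-spectrum structure maps respect the cell filtration; then a stem-by-stem comparison with the computation of $A = H^{\ast,\ast}(H)$ closes the argument.
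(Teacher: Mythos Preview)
Your overall strategy --- write $H$ as a hocolim of desuspended Eilenberg--Mac\,Lane suspension spectra, smash with $H$, invoke \cite[Thm.~4.20(2)]{V3} to see each term is a proper Tate motive, then pass to the colimit and read off the summands from the Steenrod algebra --- is exactly the paper's. But there are two genuine issues.

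First, the step you flag as ``the main obstacle'' is in fact the entire content of the lemma, and your proposed fix (choose compatible cell structures level by level and do bookkeeping) is not how the paper resolves it. The paper simply cites \cite[Prop.~3.65]{V3}, which asserts that the subcategory of proper Tate motives of weight $\geq 0$ is closed under homotopy colimits. That black-box result absorbs all of the compatibility and stabilization questions you raise; without it, your sketch is not yet a proof, and arranging the wedge decompositions to be strictly compatible along the tower is nontrivial.

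Second, your identification step contains an error. You write that $H^{*,*}(H\Smash H)$ is computed to be $A$, and that each wedge summand $\Sigma^{n_\alpha}H$ contributes a free rank-one piece to this cohomology. Neither is correct: a summand $\Sigma^{n_\alpha}H$ contributes a shifted copy of $A$ (not of $\M_2$) to $H^{*,*}(\blank) = [\blank,\Sigma^{*,*}H]$, and $H^{*,*}(H\Smash H)$ is much larger than $A$. The paper instead works in $H$-modules, using the free--forgetful adjunction
\[
A = [H,\Sigma^{*,*}H] \cong [H\Smash H,\Sigma^{*,*}H]_H = \Bigl[\bigvee_\alpha \Sigma^{n_\alpha}H,\Sigma^{*,*}H\Bigr]_H = \prod_\alpha \Sigma^{n_\alpha}\M_2,
\]
and this is what pins down the $n_\alpha$'s. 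Equivalently you could compare $\pi_{*,*}(H\Smash H)=H_{*,*}(H)=A_*$ with $\bigoplus_\alpha \Sigma^{n_\alpha}\M_2$; but $H^{*,*}(H\Smash H)$ is the wrong invariant.
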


\begin{proof}
As a motivic spectrum, $H$ is  a directed homotopy colimit of desuspensions
of the spectra $\Sigma^\infty K(\Z/2(n),2n)$. Precisely,
\[ H\he \hocolim_n \Sigma^{-2n,-n} \Sigma^\infty K(\Z/2(n),2n),
\]
where the maps in this hocolim come from the structure maps in the
spectrum $H$.  It follows that $H\Smash H$ is equivalent to
\begin{myequation}
\label{eq:hoco}
 \hocolim_n \bigl [\Sigma^{-2n,-n} H\Smash \Sigma^\infty K(\Z/2(n),2n)
\bigr ],
\end{myequation}
where the homotopy colimit is taken in the category of left
$H$-modules.  Since each $H\Smash \Sigma^\infty K(\Z/2(n),2n)$ is a
proper Tate motive of weight $\geq n$, its $\Sigma^{-2n,-n}$
desuspension is a proper Tate motive of weight $\geq 0$.  By
\cite[Prop. 3.65]{V3}, the homotopy colimit of (\ref{eq:hoco}) is therefore also a
proper Tate motive of weight $\geq 0$.  

To complete the proof we consider the chain of isomorphisms
\[ A^{**}=[H,\Sigma^{**}H]\iso [H\Smash
H,\Sigma^{**}H]_H=[\Wedge_\alpha
\Sigma^{n_\alpha}H,\Sigma^{**}H]_H=\prod_\alpha
\Sigma^{n_\alpha}H^{**}.
\]
Here $[\blank,\blank]_H$ denotes maps in the homotopy category of left
$H$-modules.  The fact that $A$ is free on the admissible monomials
tells us what the $n_\alpha$'s must be (and that the above product is also a
sum).
\end{proof}

\begin{proof}[Proof of Proposition~\ref{pr:HH}]
If $M$ is an $H$-module, there is a canonical map
\[ \theta_{H,M}\colon \pi_{*,*}(M) \llra{\iso} [\Sigma^{**}H,M]_H \ra \Hom_{\M_2}(
[\Sigma^{**}M,H]_H, \M_2).
\]
The definition of the second map in the composite is completely analagous to the
definition of $\theta_X$ above.  Moreover,
when $M$ has the form $H\Smash X$, the map $\theta_{H,M}$ is naturally
isomorphic to $\theta_X$.  So our goal is to show that
$\theta_{H,H\Smash H}$ is an isomorphism.

It is clear that $\theta_{H,M}$ is an isomorphism when $M=H$, or more
generally when $M$ is a motivically finite type wedge of suspensions of $H$.  
So by Lemma~\ref{le:HH=Tate} this applies for $M=H\Smash H$.
\end{proof}

Our next goal is the following K\"unneth isomorphism:

\begin{prop}
\label{pr:kunneth}
For any motivic spectrum $A$ which admits a right $H$-module structure, the natural
K\"unneth map $H_{**}(A)\tens_{\M_2} H_{**}(H)\ra H_{**}(A\Smash H)$
is an isomorphism.  In particular, the maps $
H_{**}(H)^{\tens(s)} \ra H_{**}(H^{\wedge(s)})$ are
isomorphisms for every $s$ (where the tensors are over $\M_2$).  
\end{prop}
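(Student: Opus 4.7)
The plan is to leverage the decomposition of $H \Smash H$ from Lemma~\ref{le:HH=Tate} rather than attempting any direct algebraic calculation. The starting observation is that, since $A$ carries a right $H$-module structure, one has a canonical equivalence $A \Smash H \simeq A \Smash_H (H \Smash H)$, where $H \Smash H$ is regarded as a left $H$-module via its first smash factor. This is the standard relative smash product identity $M \Smash_H (H \Smash X) \simeq M \Smash X$ specialized to $M = A$ and $X = H$.

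The next step is to substitute the equivalence from Lemma~\ref{le:HH=Tate}: as a left $H$-module, $H \Smash H$ splits as a motivically finite type wedge $\bigvee_\alpha \Sigma^{n_\alpha} H$. Applying the functor $A \Smash_H (-)$, which commutes with wedges and sends $\Sigma^n H$ to $\Sigma^n A$, yields an equivalence $A \Smash H \simeq \bigvee_\alpha \Sigma^{n_\alpha} A$. Smashing on the left with $H$ and applying $\pi_{*,*}$---using compactness of the motivic spheres to commute $\pi_{*,*}$ past the wedge---gives
\[ H_{*,*}(A \Smash H) \cong \bigoplus_\alpha \Sigma^{n_\alpha} H_{*,*}(A). \]
Since the same decomposition also shows $H_{*,*}(H) \cong \bigoplus_\alpha \Sigma^{n_\alpha} \M_2$ (consistent with Proposition~\ref{pr:HH} and freeness over $\M_2$), tensoring with $H_{*,*}(A)$ over $\M_2$ reproduces the same direct sum, and one checks naturality of the maps involved to identify the composite with the K\"unneth map.

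For the ``in particular'' clause I would proceed by induction on $s$: the spectrum $H^{\wedge(s-1)}$ inherits a right $H$-module structure from $H$'s ring structure on the last smash factor, so the first statement applies with $A = H^{\wedge(s-1)}$, and combining with the inductive hypothesis gives $H_{*,*}(H^{\wedge s}) \cong H_{*,*}(H)^{\otimes s}$. The step I expect to be the most delicate is the final naturality check: verifying that the chain of equivalences constructed above genuinely corresponds to the K\"unneth pairing, rather than some twisted version of it, requires careful bookkeeping of which $H$-module structure is acting at each stage, though it is not computationally intensive.
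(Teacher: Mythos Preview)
Your proposal is correct and follows essentially the same route as the paper: use $A\Smash H \simeq A\Smash_H(H\Smash H)$, invoke Lemma~\ref{le:HH=Tate} to split $H\Smash H$ as a wedge of suspensions of $H$, deduce that $A\Smash H$ is a wedge of suspensions of $A$, and read off the K\"unneth isomorphism. The paper's own proof is the three-sentence version of exactly this argument, and your induction for the ``in particular'' is the intended one.
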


This result is automatic if one believes that $H$ is cellular in the
sense of \cite{DI2}, using the standard K\"unneth isomorphism for
cellular objects [loc. cit].  The cellularity of $H$ has been claimed
by Hopkins and Morel, but their proof has never appeared.  We will
avoid this issue by instead proving the proposition in a way
that only uses the
cellularity of $H\Smash H$ as an $H$-module.

Consider the homotopy category of $H$-modules.  Define the
\dfn{$H$-cellular spectra} to be the smallest full subcategory which
contains $H$, is closed under arbitrary wedges, and has the property
that when  $A\ra B \ra C$ is a cofiber sequence and two of the three
terms are in the subcategory, so is the third.  By Lemma~\ref{le:HH=Tate},
$H\Smash H$ is $H$-cellular as a left $H$-module.

\begin{proof}[Proof of Proposition~\ref{pr:kunneth}]
If $A$ is any right $H$-module, then $A\Smash H\he A\Smash_H (H\Smash
H)$.  Since $H\Smash H$ is a wedge of suspension of $H$'s, $A\Smash H$
is therefore a wedge of suspensions of $A$'s.  It's easy to now show
that $H_{**}(A\Smash H)\iso H_{**}(A)\tens_{\M_2} H_{**}(H)$.  
\end{proof}

Let $\SSH$ be the smallest full subcategory of the motivic stable homotopy
category that satisfies
the following properties:
\begin{enumerate}[(i)]
\item $\SSH$ contains $S$;
\item If $A\ra B\ra C$ is a cofiber sequence and two of the terms
belong to $\SSH$, then so does the third;
\item $\SSH$ is closed under arbitrary wedges;
\item If $A\in \SSH$ then $A\Smash H$ is also in $\SSH$.  
\end{enumerate}
The smallest full subcategory of the motivic stable homotopy category
that satisfies (i)--(iii)
is denoted $\SS$, and equals the category of cellular spectra of \cite{DI2}.

\begin{lemma}
\label{le:kunneth}
For every $A\in \SSH$, the canonical maps 
$H_{**}(A)\tens_{\M_2}
H_{**}(H^{\wedge(s)}) \ra H_{**}(A\Smash H^{\wedge(s)})$ are
isomorphisms for every $s$.
\end{lemma}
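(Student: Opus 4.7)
The plan is to verify that the full subcategory $\mathcal{C}$ of the motivic stable homotopy category consisting of those $A$ for which the Künneth map is an isomorphism for every $s\geq 0$ contains $S$ and satisfies the four closure properties (i)--(iv) defining $\SSH$. Since $\SSH$ is by construction the smallest such subcategory, this will prove the lemma. The case $s=0$ is trivial for any $A$, so from here on I assume $s\geq 1$.

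First I would handle the base case: when $A=S$ we have $H_{**}(S)=\M_2$ and $A\Smash H^{\wedge s}=H^{\wedge s}$, so the Künneth map is just the canonical isomorphism $\M_2\tens_{\M_2} H_{**}(H^{\wedge s})\ra H_{**}(H^{\wedge s})$. Next, for the cofiber sequence closure (ii), the key input is that $H_{**}(H^{\wedge s})$ is flat (in fact free) as an $\M_2$-module: Lemma~\ref{le:HH=Tate} tells us $H_{**}(H)$ is free, and Proposition~\ref{pr:kunneth} gives $H_{**}(H^{\wedge s})\iso H_{**}(H)^{\tens s}$, which is then also free. Thus the functor $-\tens_{\M_2} H_{**}(H^{\wedge s})$ is exact, and for a cofiber sequence $A\ra B\ra C$ the five lemma applied to the long exact sequences in $H_{**}(-)$ and $H_{**}(-\Smash H^{\wedge s})$ yields that membership in $\mathcal{C}$ propagates across two-out-of-three.

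Closure under arbitrary wedges (iii) comes down to the fact that both $H_{**}(-)$ and $H_{**}(-\Smash H^{\wedge s})$ commute with wedges (using the compactness of motivic spheres recalled in the introduction), together with the fact that $-\tens_{\M_2} H_{**}(H^{\wedge s})$, being a left adjoint, preserves arbitrary direct sums. Finally, for (iv), I claim more is true: $A\Smash H\in\mathcal{C}$ for \emph{any} motivic spectrum $A$, because $A\Smash H$ carries a canonical right $H$-module structure via its rightmost factor, and so does each intermediate smash $A\Smash H^{\wedge i}$ for $i\geq 1$. Iterated application of Proposition~\ref{pr:kunneth} gives
\[ H_{**}((A\Smash H)\Smash H^{\wedge s})\iso H_{**}(A\Smash H)\tens_{\M_2} H_{**}(H)^{\tens s}\iso H_{**}(A\Smash H)\tens_{\M_2} H_{**}(H^{\wedge s}),
\]
which is precisely the Künneth isomorphism in question. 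In particular $A\Smash H\in\mathcal{C}$ whenever $A\in\mathcal{C}$.

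The main obstacle is essentially bookkeeping: Proposition~\ref{pr:kunneth} provides the Künneth isomorphism only after smashing with a single copy of $H$, and chaining $s$ applications requires keeping track of the right $H$-module structures on the intermediate spectra $A\Smash H^{\wedge i}$ and using associativity of the smash product. With this settled, combining the four verifications yields $\SSH\subseteq\mathcal{C}$, which is the statement of the lemma.
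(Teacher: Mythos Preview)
Your proof is correct and follows essentially the same strategy as the paper: define the full subcategory $\mathcal C$ of spectra satisfying the K\"unneth isomorphism for all $s$, check that it satisfies the four closure properties defining $\SSH$, and conclude $\SSH\subseteq\mathcal C$. The paper declares (i)--(iii) ``evident'' and handles (iv) via the second statement of Proposition~\ref{pr:kunneth}; you supply the details the paper omits (notably the freeness of $H_{**}(H^{\wedge s})$ over $\M_2$ needed for the five-lemma argument in (ii)), and for (iv) you instead iterate the first statement of Proposition~\ref{pr:kunneth} using the right $H$-module structure on $A\Smash H$, which is an equivalent route and in fact shows the slightly stronger fact that $A\Smash H\in\mathcal C$ for arbitrary $A$.
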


\begin{proof}
Let $\cD$ be the full subcategory of the motivic stable homotopy category
consisting of all
objects $A$ for which the maps in the statement of the lemma are
isomorphisms, for all $s$.  It is evident that $\cD$ satisfies
properties (i)--(iii).  The second statement of
Proposition~\ref{pr:kunneth} implies that $\cD$ satisfies (iv)
as well, so $\cD$ contains $\SSH$.
\end{proof}

\subsection{The homological Adams spectral sequence}
In this section we set up homological Adams spectral sequences in the
motivic world, and discuss their convergence properties.  Let $E$ be a
motivic homotopy ring spectrum (a ring object in the homotopy category
of motivic spectra).  Let $\bE$ be the homotopy fiber of $S\ra E$.  As
in
\cite{Bo}, for any spectrum $X$ there is a standard tower of 
homotopy fiber sequences having
\[ X_s=\bE^{\Smash(s)}\Smash X, \qquad W_s=E\Smash X_s=E\Smash
\bE^{\Smash(s)}\Smash X.\]
The fiber sequence $X_{s+1}\ra X_s\ra W_s$ is induced by smashing 
$\bar{E}\ra S\ra E$ with $X_s$.  This tower satisfies conditions (a-c) in 
\cite[Def. 2.2.1]{R} for being an $E_*$-Adams resolution.  Below we
will show that it also satisfies condition (d).

Let $C_{s-1}$ be the homotopy cofiber of $X_s \ra X_0$.  Then there
are induced maps $C_s\ra C_{s-1}$, and the cofiber of this map is
$\Sigma^{1,0} W_s$.  One gets a tower under $X$ of the form
\[ \xymatrix{
&& \Sigma W_3 & \Sigma W_2 & \Sigma W_1 \\
X \ar[r] & \cdots \ar[r] & C_2\ar[r]\ar[u] & C_1 \ar[u]\ar[r] & C_0.\ar[u]
}
\]
The homotopy limit of the $C_i$'s is called the {\it $E$-nilpotent
completion\/} of $X$, and denoted $X^{\wedge}_E$ (see \cite[Section 5]{Bo}).
Note that for formal reasons the homotopy spectral sequences of the
$\{C_s\}$ tower and the $\{X_s\}$ tower may be identified.

Now suppose that $E$ is associative and unital on-the-nose---that is,
$E$ is a motivic symmetric ring spectrum.  Then for any spectrum $X$ one
may consider the cosimplicial spectrum
\begin{myequation}
\label{eq:Ecompletion}
\xymatrix{
E\Smash X \ar@<0.5ex>[r]\ar@<-0.5ex>[r] & E\Smash E\Smash X
 \ar@<0.6ex>[r]\ar[r]\ar@<-0.6ex>[r]
 & E\Smash E\Smash E\Smash X \ar@<0.8ex>[r]\ar@<0.3ex>[r]\ar@<-0.2ex>[r]\ar@<-0.7ex>[r]
 & \cdots
}
\end{myequation}
Here the coface maps are all induced by the unit $S\ra E$, and the
codegeneracies (not drawn) all come from multiplication $E\Smash E\ra E$.  The
homotopy limit of this cosimplicial spectrum is another model for
$X^{\wedge}_E$.  To see this, note that the usual $\Tot$ tower is an
$E$-nilpotent resolution of $X$ in the sense of Bousfield
\cite[Def. 5.6]{Bo}, and so the holim of this $\Tot$ tower is homotopy
equivalent to $X^{\wedge}_E$ by \cite[Prop. 5.8]{Bo}.  [Note: The
proof of \cite[Prop. 5.8]{Bo} goes through almost verbatim in the
motivic category except for one step, in \cite[5.11]{Bo}, where a map
is proven to be a weak equivalence by showing that it induces
isomorphisms on stable homotopy groups.  Motivically one must look at
homotopy classes of maps from all smooth schemes, not just spheres;
but otherwise the argument is the same.] 

Now, it is a general fact in any model category that a homotopy limit
of a cosimplicial object is weakly equivalent to the homotopy limit of
the corresponding diagram in which one forgets the codegeneracies.
This is because the subcategory of $\Delta$ consisting of the
monomorphisms is homotopy initial inside of $\Delta$.  But if $X$ is itself
a ring spectrum, then the cosimplicial object $E^{\bullet}\Smash X$
(regarded without codegeneracies) is a diagram of ring spectra.  We
may then take its homotopy limit {\it in the model category of motivic
ring spectra\/}, and we get something whose underlying spectrum is
equivalent to what we got by just taking the holim in motivic spectra.
This is a long way of saying that if $X$ and $E$ are both ring spectra
then via this holim we may build a
model for $X^{\wedge}_E$ which is itself a ring spectrum.

\begin{remark}
\label{re:module_completion}
If $E$ is a ring spectrum and $X$ is a module over $E$, then the
cosimplicial object of (\ref{eq:Ecompletion}) admits a contracting
homotopy.  This shows that the canonical map $X\ra X^{\wedge}_E$ is an
equivalence in this case.  We will make use of this fact in
Section~\ref{se:motAN} below.
\end{remark}

\medskip

Now we will specialize and take $E=H$.  Let $X$ be any motivic
spectrum, and write $\{X_s,W_s\}$ for the canonical Adams tower just
constructed.  Note that if $X\in \SSH$ then by a simple induction we
have that all the objects $X_s$ and $W_s$ belong to
$\SSH$ as well.

\begin{prop}
\label{pr:homconv}
For any motivic spectrum $X\in \SSH$, the $E_2$-term of the homotopy
spectral sequence for the tower $\{X_s,W_s\}$ is naturally isomorphic
to $\Ext_{H_{**}H}(\M_2,H_{**}(X))$ (where this means $\Ext$ in the category of
$H_{**}H$-comodules).
\end{prop}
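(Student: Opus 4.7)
The plan is to identify the $E_1$-page of this spectral sequence with the normalized cobar complex that computes $\Ext_{A_*}(\M_2, H_{**}(X))$ and then take cohomology. Throughout, I write $A_* = H_{**}(H)$ and $\bar{A}_* = H_{**}(\bH)$, so that the cofiber sequence $\bH \to S \to H$, together with the multiplicative retraction of the unit $H \to H \wedge H$, identifies $\bar{A}_*$ (up to a natural suspension) with the augmentation ideal of $A_*$.

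First I would verify that all the spectra $X_s = \bH^{\wedge s} \wedge X$ and $W_s = H \wedge X_s$ lie in $\SSH$. Since $X \in \SSH$ by hypothesis and $\bH$ lies in $\SSH$ (it fits in the cofiber sequence $\bH \to S \to H$, and both $S$ and $H$ are in $\SSH$ by properties (i) and (iv)), a straightforward induction using properties (ii) and (iv) handles both $X_s$ and then $W_s$. This puts us in a position to apply Lemma~\ref{le:kunneth}.

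Next I would compute the $E_1$-page $E_1^{s,*,*} = \pi_{*,*}(W_s) = H_{**}(\bH^{\wedge s} \wedge X)$. Iteratively applying the K\"unneth isomorphism of Lemma~\ref{le:kunneth} to the cofiber sequences $\bH^{\wedge j} \wedge X \to \bH^{\wedge (j-1)} \wedge X \to H \wedge \bH^{\wedge (j-1)} \wedge X$, and using that the unit summand splits off via multiplication, one identifies $\pi_{**}(W_s)$ with $\bar{A}_*^{\otimes s} \otimes_{\M_2} H_{**}(X)$ as an $\M_2$-module. Under these identifications, the $d_1$-differential---induced on $\pi_{**}$ by the composite $W_s \to \Sigma X_{s+1} \to \Sigma W_{s+1}$ coming from the connecting map of the tower followed by the next unit---becomes the standard cobar differential: the alternating sum of coface maps built from inserting the coproduct on $A_*$ (restricted to $\bar{A}_*$) and the $A_*$-coaction on $H_{**}(X)$. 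Since $A_*$ is free over $\M_2$ (Lemma~\ref{le:HH=Tate}), taking cohomology of this complex yields precisely $\Ext_{A_*}(\M_2, H_{**}(X))$ in the category of $A_*$-comodules.

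The main obstacle is the naturality bookkeeping in the second step: one has to check carefully that the iterated K\"unneth isomorphisms intertwine the structure maps of the Adams tower with the cobar coface maps, keeping track of which smash factor the unit $S \to H$ and the connecting map $H \to \Sigma \bH$ are being inserted into at each stage, and that the $A_*$-coaction on $H_{**}(X)$ produced this way agrees with the canonical one. This is a standard but nontrivial diagram chase, entirely parallel to the classical argument once Lemma~\ref{le:kunneth} is available.
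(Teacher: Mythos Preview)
Your proposal is correct and follows essentially the same route as the paper's proof: both verify the spectra lie in $\SSH$, use the K\"unneth results (you cite Lemma~\ref{le:kunneth}, the paper cites Proposition~\ref{pr:kunneth} plus induction) to identify $\pi_{**}(W_s)\cong H_{**}(\bH)^{\otimes s}\otimes_{\M_2}H_{**}(X)$, and then invoke the standard identification of the resulting complex with the cobar complex computing $\Ext$. The only cosmetic difference is that the paper also records $H_{**}(W_s)\cong H_{**}H\otimes_{\M_2}H_{**}(\bH)^{\otimes s}\otimes_{\M_2}H_{**}(X)$ and phrases the conclusion as ``applying $\Hom_{H_{**}H}(H_{**},\blank)$ to the cobar resolution,'' whereas you go straight to the normalized cobar complex; these are equivalent.
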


\begin{proof}
The object $W_s$ is equal to $H\Smash \bH^{\wedge(s)}\Smash X$. 
It follows from Proposition~\ref{pr:kunneth} and 
an easy induction on $s$ that
\[ \pi_*(W_s)\iso [H_{**}(\bH)]^{\tens(s)} \tens_{\M_2}H_{**}(X) \]
and
\[ H_{**}(W_s)\iso H_{**}{H}\tens_{\M_2} [H_{**}(\bH)]^{\tens(s)}
\tens_{\M_2} H_{**}(X).
\]
The usual arguments show that
\[ 0 \ra H_{**}(X) \ra H_{**}(W_0) \ra H_{**}(W_1) \ra H_{**}(W_2) \ra
\cdots \]
may be identified with the cobar resolution for $H_{**}(X)$, and that
the complex
\[ 0 \ra \pi_{**}(W_0) \ra \pi_{**}(W_1) \ra \cdots \]
may be identified with the complex obtained by applying
$\Hom_{H_{**}H}(H_{**},\blank)$ to the cobar resolution.
\end{proof}

\begin{remark}
\label{re:converge}
The convergence of the homological Adams spectral sequence works in
the standard way, as described in \cite[Section 6]{Bo}.  That is, if
$\lim^{1}_r E_r^{s,t,u}(X)=0$ for each $s,t,u$ then the two natural
maps
\[ \pi_{*,*}(X^{\wedge}_H) \ra \lim_s \pi_{*,*}(C_s),
\qquad\text{and} \]
\[ F_s\pi_{*,*}(X^{\wedge}_H)/F_{s+1}\pi_{*,*}(X^{\wedge}_H)  \ra E_{\infty}^{s,*,*}(X)
\]
are isomorphisms.  Here $F_s$ denotes the filtration on
$\pi_{*,*}(X^{\wedge}_H)$ coming from the tower $\{C_t\}$.  
\end{remark}

\subsection{Comparison of towers}
\label{se:compare}
To complete our discussion we will verify that the cohomological Adams
spectral sequence for the sphere spectrum, as constructed in
Section~\ref{se:motadams}, also converges to the homotopy groups of
$S^{\wedge}_H$.  

Write $\{X'_s,W'_s\}$ for the `naive' Adams tower constructed as in
Section~\ref{se:motadams}.  That is, $X_0'=X$, each $W_s'$ is a wedge
of Eilenberg-MacLane spectra, and $X_s'\ra W_s'$ is surjective on
$H$-cohomology.  Our goal is to identify the homotopy spectral
sequences of $\{X'_s\}$ and $\{X_s\}$, under suitable assumptions on
$X$.  The latter is the same as for $\{C_s\}$, and hence
(conditionally) converges to the homotopy groups of $X^{\wedge}_H$.

\begin{lemma}
Suppose that in the tower $\{X_s',W_s'\}$, all the spectra $W_s'$ are
motivically finite type wedges of $H$.  
Suppose also that $H^{**}(X)$ is
free over $\M_2$ and the natural map $H_{**}(X)\ra
\Hom_{\M_2}(H^{**}(X),\M_2)$ is an isomorphism.  Then the complex
\[ 0 \ra H_{**}(X) \ra H_{**}(W_0') \ra H_{**}(W_1') \ra H_{**}(W_2')
\ra \cdots 
\]
is a resolution of $H_{**}(X)$ by relative injective comodules (see
\cite[A1.2.10]{R} for terminology).  Moreover, the $E_2$-term of the
homotopy spectral sequence for $\{X_s',W_s'\}$ is naturally isomorphic
to $\Ext_{H_{**}H}(\M_2,\M_2)$.  
\end{lemma}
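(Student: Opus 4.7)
The plan is to prove the two assertions in parallel by inductively propagating the freeness and duality hypotheses along the tower, and then matching the resulting resolution termwise with the complex $\pi_{**}(W_\bullet')$.

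First, each $H_{**}(W_s')$ is an extended $H_{**}H$-comodule, and therefore relative injective. Since $W_s'$ is a motivically finite type wedge $\bigvee_\alpha \Sigma^{n_\alpha} H$, taking homology yields $\bigoplus_\alpha \Sigma^{n_\alpha} H_{**}(H) \iso H_{**}(H) \otimes_{\M_2} N_s$, where $N_s = \bigoplus_\alpha \Sigma^{n_\alpha} \M_2$. In particular the primitive functor $\Hom_{H_{**}H}(\M_2, \blank)$ carries $H_{**}(W_s')$ to $N_s$, which is naturally identified with $\pi_{**}(W_s')$ via the unit $S^{0,0} \to H$.

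Next I would verify exactness by induction on $s$, with inductive hypothesis that $H^{**}(X_s')$ is free over $\M_2$ and $\theta_{X_s'}$ is an isomorphism. The hypothesis that $H^{**}(W_s') \to H^{**}(X_s')$ is surjective, combined with the long exact cohomology sequence of the cofiber sequence $X_{s+1}' \to X_s' \to W_s'$, yields a short exact sequence
\[ 0 \to \Sigma^{-1,0} H^{**}(X_{s+1}') \to H^{**}(W_s') \to H^{**}(X_s') \to 0
\]
of $\M_2$-modules. Since $H^{**}(W_s')$ is a motivically finite type sum of suspensions of the free module $A$ (by Lemma~\ref{le:sum=prod}), it is free, and since $H^{**}(X_s')$ is free by induction, the sequence splits; hence $H^{**}(X_{s+1}')$ is free. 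Dualizing over $\M_2$ and comparing via the $\theta$-maps with the long exact homology sequence forces $\theta_{X_{s+1}'}$ to be an isomorphism and produces a short exact sequence $0 \to H_{**}(X_{s+1}') \to H_{**}(W_s') \to H_{**}(X_s') \to 0$. Splicing these across $s$ yields the desired resolution of $H_{**}(X)$.

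Finally, since $\Hom_{H_{**}H}(\M_2, H_{**}(W_\bullet')) \iso \pi_{**}(W_\bullet')$ termwise, and the tower differentials match the cobar-type differentials of the resolution under this identification, the cohomology of $\pi_{**}(W_\bullet')$ computes $\Ext_{H_{**}H}(\M_2, \M_2)$. The main obstacle is the compatibility of the dualization in the inductive step with the $\theta$ comparison: one must check that the short exact sequence in cohomology dualizes cleanly and maps isomorphically onto the corresponding short exact sequence in homology. Once this is in hand, the remaining bookkeeping is formal and parallel to the classical treatment in \cite[Ch. 2]{R}.
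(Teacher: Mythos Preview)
Your approach is correct and close in spirit to the paper's, but the paper organizes the argument more economically. Rather than inducting on $s$ and propagating freeness of $H^{**}(X_s')$ together with the isomorphism $\theta_{X_s'}$ up the tower, the paper treats the entire cohomology complex
\[
0 \la H^{**}(X) \la H^{**}(W_0') \la H^{**}(W_1') \la \cdots
\]
at once: it is exact by construction, each term is free over $\M_2$, and since $H^{**}(X)$ is also free the whole thing splits over $\M_2$. One then applies $\Hom_{\M_2}(\blank,\M_2)$ in a single step and compares with the homology complex via the maps $\theta_X$ and $\theta_{W_s'}$ only---never needing $\theta_{X_s'}$ for $s>0$. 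Your inductive version recovers this, but does more work than necessary.

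One small slip: your short exact sequence in homology is written with the arrows reversed. From the cofiber sequence $X_{s+1}' \to X_s' \to W_s'$ the homology sequence reads
\[
0 \to H_{**}(X_s') \to H_{**}(W_s') \to \Sigma^{1,0} H_{**}(X_{s+1}') \to 0,
\]
with $H_{**}(X_s')$ injecting (not surjecting) into $H_{**}(W_s')$, and a suspension on the cokernel. This does not affect the splicing argument but should be corrected.
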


\begin{proof}
Start with the complex
\[ 0 \la H^{**}(X) \la H^{**}(W_0') \la H^{**}(W_1') \la \cdots \]
Because of the way the tower $\{X_s',W_s'\}$ was constructed, this is
a resolution.  The assumption that each $W_s'$ is a motivically finite type
wedge of
Eilenberg-MacLane spectra shows that it is a resolution by free
$H^{**}H$ modules.  In particular, it is a resolution by free
$\M_2$-modules.  Since $H^{**}(X)$ is itself free as an $\M_2$-module,
the resolution is split (as a complex of $\M_2$-modules).  

Now apply the functor $\Hom_{\M_2}(\blank,\M_2)$.  The resulting
complex is still exact, and there is a map of complexes
\[ \xymatrixcolsep{1pc}\xymatrix{
0 \ar[r] & H_{**}(X) \ar[r]\ar[d] & H_{**}(W_0') \ar[r]\ar[d] & H_{**}(W_1')
\ar[r]\ar[d] & \cdots \\
0 \ar[r] & \Hom(H^{**}(X),\M_2) \ar[r] & \Hom(H^{**}(W_0'),\M_2)
\ar[r] 
& \Hom(H^{**}(W_1'),\M_2)
\ar[r] & \cdots 
}
\]
Each vertical map is an isomorphism, by our assumptions on $X$ and the
$W_i'$'s.  Hence the top complex is a resolution which is split over
$\M_2$.  Each $H_{**}(W_i')$ is a direct product of copies of
$H^{**}H$, and so is certainly a relative injective.

The final claim of the lemma results from the natural maps
\[ \pi_{**}(W_s') \ra \Hom_{H_{**}H}(H_{**},H_{**}(W_s')), \]
which are isomorphisms given our assumptions about the spectra $W_s'$.
\end{proof}

Our final task is to compare the $\{X_s,W_s\}$ tower to the
$\{X_s',W_s'\}$ tower.
Note that each $W_s'$ is naturally an $H$-module (being a wedge of
suspensions of $H$'s).  Using this, one can inductively
construct a map of towers $\{X_s,W_s\}\ra \{X'_s,W_s'\}$ by the
standard method.  Start with the identity map  $X_0\ra X_0'$, and
consider the diagram
\[ \xymatrix{ W_0\ar@{=}[r] & X_0\Smash H \ar[r] & X_0'\Smash H \ar[r]
& W_0'\Smash H \ar@/^2ex/[d] \\
& X_0 \ar[u]\ar[r] & X_0'\ar[u] \ar[r] & W_0'.\ar[u] \\
}
\]
One obtains a map $W_0\ra W_0'$ by following around the diagram.
There is then an induced map $X_1\ra X_1'$, and one continues
inductively.

\begin{prop}
Assume that $X$ is in $\SSH$, that $H^{**}(X)$ is
free over $\M_2$, that all the spectra $W_s'$ are motivically finite
type wedges of $H$, and that the natural map $H_{**}(X)\ra
\Hom_{\M_2}(H^{**}(X),\M_2)$ is an isomorphism.  Then the map of
towers
$\{X_s,W_s\} \ra \{X'_s,W_s'\}$ induces a map of spectral sequences
which is an isomorphism from the $E_2$-terms onward.
\end{prop}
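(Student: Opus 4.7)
The plan is to show that both $E_2$-terms can be naturally identified with $\Ext_{H_{**}H}(\M_2, H_{**}(X))$, and that under these identifications the induced map on $E_2$ corresponds to the identity; once that is established, the induced map on $E_r$ for every $r\geq 2$ is automatically an isomorphism, by induction on $r$ and the usual argument that a map of spectral sequences which is an isomorphism on one page is an isomorphism on all subsequent pages.

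First I would apply $H_{**}(\blank)$ to the map of towers $\{W_s\}\to\{W_s'\}$ to obtain a commutative diagram of augmented complexes of $H_{**}H$-comodules, both augmented over $H_{**}(X)$ via the identity. By the proof of Proposition~\ref{pr:homconv} (which uses Lemma~\ref{le:kunneth} to identify $H_{**}(W_\bullet)$ with the cobar resolution of $H_{**}(X)$), the top complex is a resolution by relative injective $H_{**}H$-comodules. By the previous lemma, the bottom complex $H_{**}(W_\bullet')$ is likewise a relative injective resolution of $H_{**}(X)$. The comparison chain map lifts the identity on $H_{**}(X)$, since the tower map $\{X_s,W_s\}\to\{X_s',W_s'\}$ was constructed starting from the identity $X_0 = X = X_0'$.

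Next I would apply $\Hom_{H_{**}H}(H_{**},\blank)$ to both resolutions. By the standard uniqueness of relative injective resolutions (see \cite[A1.2]{R}), any chain map between two such resolutions which lifts a given map on augmentations is a chain homotopy equivalence; hence the induced map on $\Hom$-complexes is an isomorphism on cohomology, namely on $\Ext_{H_{**}H}(\M_2,H_{**}(X))$. It then remains to identify these $\Hom$-complexes with the $E_1$-pages of the two spectral sequences. For the geometric tower this is contained in the proof of Proposition~\ref{pr:homconv}; for the naive tower it follows from the isomorphism $\pi_{**}(W_s')\iso \Hom_{H_{**}H}(H_{**},H_{**}(W_s'))$ established at the end of the previous lemma.

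The main obstacle is the bookkeeping required to verify that the map of $E_1$-pages induced by the tower comparison actually corresponds, under these identifications, to the map of $\Hom$-complexes coming from the comparison of resolutions. This is a naturality check: one needs that both $W_s$ and $W_s'$ carry natural left $H$-module structures (which they do, since $W_s = H\Smash \bH^{\Smash s}\Smash X$ is a free $H$-module and each $W_s'$ is a wedge of suspensions of $H$), that the isomorphism $\pi_{**}(M)\iso\Hom_{H_{**}H}(H_{**},H_{**}(M))$ is natural in maps of such $H$-modules $M$, and that the comparison $W_s\to W_s'$ can be chosen to be an $H$-module map at each stage of the inductive construction. Once these naturality issues are settled, the isomorphism on $E_2$ follows, and the proposition is proved.
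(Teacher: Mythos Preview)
Your proposal is correct and follows essentially the same approach as the paper: apply $H_{**}$ to the tower map to get a map of relative injective resolutions covering the identity on $H_{**}(X)$, then apply $\Hom_{H_{**}H}(H_{**},\blank)$ and invoke the comparison theorem for such resolutions to conclude that the induced map on $E_2\cong\Ext_{H_{**}H}(\M_2,H_{**}(X))$ is an isomorphism. The only difference is that the paper does not dwell on the naturality bookkeeping in your last paragraph; in particular, requiring $W_s\to W_s'$ to be an $H$-module map is unnecessary, since the identification $\pi_{**}(M)\cong\Hom_{H_{**}H}(H_{**},H_{**}(M))$ is natural in arbitrary maps of spectra.
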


\begin{proof}
This is almost immediate from the previous results.  The map of towers
induces a map of complexes 
\[ \xymatrix{
0\ar[r] & H_{**}(X) \ar[r]\ar[d] & H_{**}(W_0) \ar[r]\ar[d] & H_{**}(W_1)
\ar[r]\ar[d] & \cdots \\
0\ar[r] & H_{**}(X) \ar[r] & H_{**}(W_0') \ar[r] & H_{**}(W_1')
\ar[r] & \cdots 
}
\]
which is the identity on $H_{**}(X)$.  By our previous results, these
complexes are resolutions of $H_{**}(X)$ by relative injectives over
$H_{**}H$.  Hence upon applying the functor
$\Hom_{H_{**}H}(H_{**},\blank)$, the map of resolutions induces an
isomorphism on cohomology groups.  These cohomology groups are
naturally isomorphic to the $E_2$-terms of the spectral sequences for the
two respective towers, so we are done.
\end{proof}

\begin{cor}
\label{co:conv}
The cohomological motivic Adams spectral sequence for the sphere spectrum $S$
strongly converges to the bigraded homotopy groups of $S^{\wedge}_H$.  
\end{cor}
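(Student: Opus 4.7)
The plan is to deduce the corollary directly from the proposition immediately preceding it, using the tower $\{X_s',W_s'\}$ constructed in Section~\ref{se:motadams} as the cohomological Adams resolution of $S$, and the canonical tower $\{X_s,W_s\}$ of Section~\ref{se:conv} (with $E=H$) as the homological Adams resolution. The preceding proposition, once its hypotheses are checked, identifies the two spectral sequences from the $E_2$-page onward; convergence is then extracted from the discussion in Remark~\ref{re:converge} applied to the canonical tower.

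First I would verify the four hypotheses of the preceding proposition when $X=S$. The sphere spectrum is clearly in $\SS$, and hence in $\SSH$. Its motivic cohomology $H^{**}(S)=\M_2$ is free over $\M_2$ of rank one, and the canonical map $H_{**}(S)\to \Hom_{\M_2}(H^{**}(S),\M_2)$ is just the identity $\M_2\to\M_2$. Finally, the Adams resolution in Section~\ref{se:motadams} was set up so that each $W_s'$ is a motivically finite type wedge of suspensions of $H$. Thus the proposition applies and gives a map $\{X_s,W_s\}\to\{X_s',W_s'\}$ that is an isomorphism of spectral sequences from $E_2$ onward. In particular, both spectral sequences have $E_2$-term $\Ext_A^{s,(t+s,u)}(\M_2,\M_2)$, and they have identical differentials.

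Next I would invoke the convergence mechanism of Remark~\ref{re:converge}. The canonical tower $\{X_s,W_s\}$ fits into the $\{C_s\}$-tower whose homotopy inverse limit is by definition $S^\wedge_H$, and the two towers have the same homotopy spectral sequence. Hence, provided $\lim^1_r E_r^{s,t,u}=0$ for every $(s,t,u)$, one obtains a strongly convergent spectral sequence with abutment $\pi_{*,*}(S^\wedge_H)$, and the induced filtration quotients match $E_\infty$. The main step to carry out here is the $\lim^1$ vanishing, which I expect to be the principal technical obstacle. The cleanest route is to observe that the $\Ext$-chart exhibits a vanishing line of slope $\tfrac{1}{2}$ (inherited from its classical counterpart via Proposition~\ref{prop:compare-ext} together with the $\tau$-torsion analysis of Sections~\ref{se:May}): for each fixed $(t,u)$ only finitely many $E_2^{s,t,u}$ are nonzero, so the tower $\{E_r^{s,t,u}\}_r$ stabilizes at a finite stage and $\lim^1_r$ vanishes termwise. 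A motivic Adams vanishing line of this kind follows, for example, from the standard cobar-complex argument applied to the motivic Steenrod algebra, using that $A$ is generated in positive topological degree over $\M_2$.

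Putting the pieces together: the preceding proposition transports the strong convergence of the homological Adams spectral sequence (guaranteed by the $\lim^1$-vanishing above) to the cohomological Adams spectral sequence of Section~\ref{se:motadams}, yielding strong convergence to the bigraded homotopy groups of $S^\wedge_H$, as claimed.
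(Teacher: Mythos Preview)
Your proposal is correct and follows essentially the same approach as the paper: verify the hypotheses of the preceding proposition for $X=S$, thereby identifying the cohomological spectral sequence with the homotopy spectral sequence of the $\{C_s\}$-tower, and then invoke Remark~\ref{re:converge} together with the vanishing line to obtain strong convergence to $\pi_{*,*}(S^\wedge_H)$. The paper's proof is terser---it simply cites \cite{A} for the vanishing line rather than sketching the argument---but the logical structure is identical.
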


\begin{proof}
Let $X=S$.  By the above results, our naively-constructed Adams
spectral sequence for $S$ is the same (from $E_2$ on) as the homotopy
spectral sequence for the tower $\{C_s\}$.  The homotopy limit of this
tower is $S^{\wedge}_H$, and the convergence properties are as
described in Remark~\ref{re:converge}.  The evident vanishing line of
the motivic Adams spectral sequence---which can be proven in exactly
the same way as for the classical case \cite{A}---guarantees strong
convergence.
\end{proof}

Applying topological realization to the tower $\{C_s\}$ for the
motivic sphere
spectrum yields the corresponding tower for the classical sphere
spectrum.  It follows that there is a map from the topological
realization of $S^{\wedge}_{H}$ to the spectrum
$(S_{\cl})^{\wedge}_{H_{\cl}}$.  This can even be constructed as a map of
ring spectra.  We get an induced ring map 
\[ \psi\colon \pi_{*,*}(S^{\wedge}_H) \ra
\pi_*(S^{\wedge}_{H_{\cl}})=\pi_*(S)^{\wedge}_2
\]
from $\pi_{*,*}(S^{\wedge}_H)$ to
the classical $2$-completed stable homotopy groups of spheres.

\begin{remark}
\label{re:summary}
We can now summarize some of our conclusions from
Section~\ref{se:diffs} as follows.  Through the 34-stem, the elements $h_1^n$,
$h_1^nP^k{h_1}$ ($n\geq 4$), $h_1^nP^kc_0$ ($n\geq 2$), and
$h_1^2h_4c_0$ all survive the spectral sequence to determine `exotic'
motivic homotopy classes in $\pi_{**}(S^{\wedge}_H)$, in the sense
that they vanish under the map $\psi$.
These claims exactly match what was discovered in \cite{HKO} using the
Adams-Novikov spectral sequence.
The reader may wonder why we are not making the same assertion about
the classes $c_0d_0$ and $[h_3g]$.  It turns out that the latter does
not survive the spectral sequence, and for $c_0d_0$ there is a hidden
$\tau$-extension which shows that this class is not truly exotic.
 This will all be explained
at the end of the next section.  
\end{remark}

\section{The motivic Adams-Novikov spectral sequence}
\label{se:motAN}

The motivic version of the Adams-Novikov spectral sequence was
considered in \cite{HKO}.   We wish to use it here to deduce some
differentials in the motivic Adams spectral sequence, which will in turn lead
to an application to classical algebraic topology.  Our approach for
setting up the Adams-Novikov spectral sequence is somewhat different
from that of \cite{HKO}, and does not depend on any of the results from
that paper.

\medskip

\subsection{Backround}
Recall that $\MGL$ denotes the motivic cobordism spectrum.  This is
constructed as a motivic symmetric ring spectrum in \cite[Section 2.1]{PPR}.

By the same methods as in classical topology one can write down the
Quillen idempotent and use it to split off $\BPL$ from $\MGL$.  This
is remarked briefly in \cite[Section 2]{HK} and carried out in detail
in \cite{Ve}.  One constructs the Quillen idempotent $e\colon
\MGL_{(2)}\ra \MGL_{(2)}$ and then 
the motivic spectrum $\BPL$ is defined to be the homotopy colimit of the sequence
\[ \MGL_{(2)}\llra{e} \MGL_{(2)} \llra{e} \cdots 
\]

One knows from the motivic Thom isomorphism that $H^{**}(\MGL)\iso
\M_2[b_1,b_2,\ldots]$ where $b_i$ has bidegree $(2i,i)$, exactly as in
classical topology.  It follows readily that $H^{**}(\BPL)\iso
\M_2[v_1,v_2,\ldots]$ where $v_i$ has bidegree $(2(2^i-1),2^i-1)$.  By
the same arguments as in classical topology (or by using topological
realization), it is easily shown that the map $H_{**}(\BPL)\ra
H_{**}(H)$ is an injection whose image is the sub-coalgebra
$P_*=\M_2[\tau^{-1} \zeta_1^2, \tau^{-1} \zeta_2^2,\ldots]$ of $A_{**}$ (see
Section~\ref{se:milnor} for notation).

In some sense our discussion below would be most natural if there
exists a model for $\BPL$ which is a symmetric ring spectrum, and
where there is a map of symmetric ring spectra $\BPL\ra H$.  This is
not known, however, and it is stronger than what we actually need.  It
turns out that all we really need are pairings $\BPL\Smash \BPL \ra
\BPL$ and $\BPL\Smash H \ra H$ which are unital on the
nose, and that is easily arranged for reasons we now explain.

First, one observes that the Quillen idempotent $e\colon \MGL_{(2)}\ra
\MGL_{(2)}$ is a map of homotopy ring spectra (not a map of 
symmetric ring spectra).  It is then easy to see that there is a map
$\mu\colon \BPL\Smash \BPL\ra \BPL$ making $\BPL$ a homotopy ring spectrum (not
a symmetric ring spectrum) and such that $\MGL_{(2)}\ra \BPL$ is a map
of homotopy ring spectra.  Choose a model for $\BPL$ which is fibrant
as a symmetric spectrum, and for which the
unit (up to homotopy) of $\mu$ is a cofibration $S\ra \BPL$.  The map
$(S\Smash \BPL)\Wedge(\BPL\Smash S)\ra \BPL\Smash \BPL$ is then a
cofibration, and so has the homotopy lifting property.  The triangle
\[ \xymatrix{ (S\Smash \BPL)\Wedge (\BPL\Smash S) \ar[r]\ar[d] &
\BPL \\
\BPL\Smash \BPL \ar[ur]_\mu
}
\]
commutes up to homotopy, and therefore we may choose another
multiplication $\mu'\colon \BPL\Smash \BPL\ra \BPL$, homotopic to
$\mu$, so that the triangle commutes on the nose.

Analogously, start with the Thom class $\BPL\ra H$ (induced from the
Thom class on $\MGL$) and consider the composite map $\BPL\Smash H \ra
H\Smash H \ra H$.  This pairing is unital up to homotopy, and
therefore by the same arguments as above we can choose a homotopic
pairing which is unital on the nose.  

As a final prelude for the work in the next section, suppose $E$ is a
motivic spectrum with a pairing $E\Smash E\ra E$ that is unital on the
nose (but where no associativity assumptions are made).  Then for any
motivic spectrum $X$ one may form the cosimplicial object without
codegeneracies
\begin{myequation}
\label{eq:codeg}
\xymatrix{
 E\Smash X \ar@<0.5ex>[r]\ar@<-0.5ex>[r] & E\Smash E\Smash X
 \ar@<0.6ex>[r]\ar[r]\ar@<-0.6ex>[r]
 & E\Smash E\Smash E\Smash X 
\ar@<0.8ex>[r]\ar@<0.3ex>[r]\ar@<-0.2ex>[r]\ar@<-0.7ex>[r]
 & \cdots
}
\end{myequation}
The homotopy limit of this diagram may be modelled by a $\Tot$-like
object, defined to be the mapping space from the standard cosimplicial
simplex (without codegeneracies) $\Delta^\bullet$.  By looking at the
usual skeletal filtration of $\Delta^\bullet$ one defines $\Tot_i$
objects as usual, and obtains a tower of fibrations whose limit is
$\Tot$.  The homotopy fiber of $\Tot_i \ra \Tot_{i-1}$ is
$\Omega^i(E^{\wedge(i)}\Smash X)$.  

The same arguments as usual (meaning in the true cosimplicial case,
where there {\it are\/} codegeneracies) show that this $\Tot$-tower is
an $E$-nilpotent resolution of $X$ in the sense of Bousfield
\cite[Def. 5.6]{Bo}, and therefore our $\Tot$ object is a model for
the $E$-nilpotent completion of $X$.  

Finally, assume that there is a pairing $E\Smash X\ra X$ which is
unital on the nose (but where again there is no associativity
condition).  This map then gives a ``contracting homotopy'' in the
cosimplicial object without codegeneracies (\ref{eq:codeg}).  The
usual arguments show that $X$ splits off every piece of the
$\Tot$-tower, and its complementary summand in each $\Tot_i$ becomes
null homotopic in $\Tot_{i-1}$.  One shows that consequently the map
$X\ra \Tot(E^\bullet\Smash X)$ is a weak equivalence.  In other words,
the same thing that works for cosimplicial objects in the case of
pairings which are associative on the nose (see
Remark~\ref{re:module_completion}) also works for cosimplicial objects
without codegeneracies.

\subsection{Construction of the spectral sequence}
For any spectrum $X$, construct a bi-cosimplicial spectrum---without
codegeneracies---of the form
\begin{myequation}
\label{eq:bicosimp}
[n],[k]\mapsto \BPL^{\Smash(n+1)}\Smash X \Smash H^{\Smash(k+1)}.
\end{myequation}
We will continue to say ``bi-cosimplicial object'' below, even though
this is an abuse of terminology since there are no codegeneracies.
The homotopy limit of this bi-cosimplicial spectrum is a kind of
``bi-completion'', which we will denote by $X^{\wedge}_{\{\BPL,H\}}$.
We will only concern ourselves with $X=S$.

We can compute the homotopy limit of our bi-cosimplicial object by
first taking homotopy limits in one direction---with respect to $n$,
say---and then taking the homotopy limit of the resulting cosimplicial
object in the other direction ($k$ in this case).  Taking
homotopy limits with respect to $n$, one obtains the following
cosimplicial spectrum:
\begin{myequation}
\label{eq:HBP} 
\xymatrix{
H^{\wedge}_{\BPL} \ar@<0.5ex>[r]\ar@<-0.5ex>[r] & (H\Smash H)^{\wedge}_{\BPL}
 \ar@<0.6ex>[r]\ar[r]\ar@<-0.6ex>[r]
 & (H^{\Smash(3)})^{\wedge}_{\BPL} 
\ar@<0.8ex>[r]\ar@<0.3ex>[r]\ar@<-0.2ex>[r]\ar@<-0.7ex>[r]
 & \cdots
}
\end{myequation}
But $H$ is a $\BPL$-module up to homotopy, and therefore so is $H^{\Smash(k)}$ for
each $k$.  It follows from the
observations in the previous section that
the maps $H^{\Smash(k)} \ra (H^{\Smash(k)})^{\wedge}_{\BPL}$ are all equivalences.
 Therefore the homotopy limit of (\ref{eq:HBP}) is
equivalent to $S^{\wedge}_H$.  So we have shown that
$S^{\wedge}_{\{\BPL,H\}}\he S^{\wedge}_H$.

Now, we can also compute the homotopy limit of our bi-cosimplicial object by
first taking homotopy limits in the $k$ direction, and then in the $n$
direction.
Taking
homotopy limits with respect to $k$, one obtains the following
cosimplicial spectrum:
\begin{myequation}
\label{eq:BPH} 
\xymatrix{
\BPL^{\wedge}_H \ar@<0.5ex>[r]\ar@<-0.5ex>[r] & (\BPL\Smash \BPL)^{\wedge}_H
 \ar@<0.6ex>[r]\ar[r]\ar@<-0.6ex>[r]
 & (\BPL^{\Smash(3)})^{\wedge}_H 
\ar@<0.8ex>[r]\ar@<0.3ex>[r]\ar@<-0.2ex>[r]\ar@<-0.7ex>[r]
 & \cdots
}
\end{myequation}

For each $k$, we may run the homological Adams spectral sequence for
computing the homotopy groups of $(\BPL^{\Smash(k)})^{\wedge}_H$.
Since $\BPL$ is cellular, Proposition~\ref{pr:homconv} says that the
$E_2$-term of this spectral sequence has the expected description in
terms of $\Ext$.
Then by the same arguments as in classical topology, the 
spectral sequence for computing
$\pi_{**}((\BPL^{\Smash(k)})^{\wedge}_H)$ is concentrated in even
degrees, and therefore collapses.  For $k=1$ this allows us to
conclude that
\[ \pi_{**}(\BPL^{\wedge}_H) \iso \Z_{(2)}[\tau,v_1,v_2,\ldots], 
\]
and if we call the above ring $V$ then for all $k\geq 1$ we get
\[ \pi_{**}((\BPL^{\Smash(k)})^{\wedge}_H) \iso
V[t_1,t_2,\ldots]\tens_V \cdots \tens_V V[t_1,t_2,\ldots]
\quad\text{($k-1$ factors).}
\]
where each $t_i$ has bidegree $(2(2^i-1),2^i-1)$.  
In other words, everything is exactly as in classical topology except
for the extra generator $\tau$ and the new grading by weight.  

Now we run the homotopy spectral sequence for the homotopy limit of
(\ref{eq:BPH}), which from now on we will refer to as {\it the motivic
Adams-Novikov spectral sequence\/}.  We find that the $E_2$-term is
precisely $\Ext_{BP_*BP}(BP_*,BP_*)$ tensored over $\Z_{(2)}$ with
$\Z_{(2)}[\tau]$, with the generators assigned appropriate weights.  
Unlike the motivic Adams $E_2$-term,
the assignment of the weights in this case follows a very simple pattern.
Namely, generators in classical $\Ext^{s,t}$ yield motivic generators
in $\Ext^{s,(t,t/2)}$.  

A chart showing the Adams-Novikov $E_2$-term is given in
Appendix~\ref{se:AN}.  
Recall that this is really a two-dimensional
picture of a three-dimensional spectral sequence.  
We have not written any of the weights in the chart, because these are
all deduced by the simple formula given in the last paragraph.
In terms of the usual naming conventions,
$\alpha_i$ has weight $i$, $\beta_2$ has weight $5$, $\eta_{3/2}$ has
weight $13$, etc.

\subsection{Comparison of the Adams and Adams-Novikov
spectral sequences}

The differentials in the motivic Adams-Novikov spectral sequence can
be deduced by using the comparison map (over the complex numbers) to
the classical one.  For instance, classically we have
$d_3(\alpha_3)=\alpha_1^4$.  Motivically this doesn't make sense,
because $\alpha_3$ lies in weight $3$ whereas $\alpha_1^4$ lies in
weight $4$.  But it allows us to deduce the motivic differential
$d_3(\alpha_3)=\tau \alpha_1^4$.  In fact, this reasoning shows
\[ d_3(\alpha_{4k-1})=\tau h_1^3\alpha_{4k-3} \quad\text{and}\quad
 d_3(\alpha_{4k+2})=\tau h_1^3\alpha_{4k} \ \ \ (k\geq 1) .
\]
These differentials are depicted in our chart, although except for the
first of them they are not given their true length (to make the
chart easier to read).

It is easy to continue this analysis and deduce all the motivic
differentials in the given range (there are only three others).  This
is done in \cite{HKO}.  The resulting $E_\infty$
term is shown in Appendix~\ref{se:AN}.

Our goal here is really to compare the information given by the
motivic Adams-Novikov spectral sequence to that given by the motivic
Adams spectral sequence.  Recall that these are converging to the same
groups, since $S^{\wedge}_{\{BPL,H\}}\he S^{\wedge}_H$.

Comparing the two spectral sequences, we see that the groups they are
converging to are in complete agreement through the $21$-stem.  
An interesting difference occurs in the $22$-stem, however.  To
make what we are about to say easier to follow, let us use the term
``bound'' to describe classes which are part of an $h_1$-tower
which survives to $E_\infty$: e.g., the towers determined by $h_1$, $P^kh_1$ and
$P^kc_0$ in the Adams spectral sequence.   

In the Adams spectral sequence we have three unbound classes surviving
in column $22$: $h_2c_1$ (weight 13), $c_0d_0$ (weight 13),
and $Pd_0$ (weight 12).  In the Adams-Novikov spectral sequence
we only get two unbound
survivors in the $22$-stem, both in weight $13$.  The only way this is
consistent is if there is a multiplicative extension of the form
\[ \tau(c_0d_0)=Pd_0 \]
in the homotopy groups of $S^{\wedge}_H$.

The next possible difference in the two spectral sequences comes in the
$26$-stem, where so far we have not resolved whether the Adams
differential $d_2([h_3g])$ is equal to $h_1^3h_4c_0$.  But the
Adams-Novikov spectral sequence gives that the $26$-stem of
$S^{\wedge}_{\{H,BPL\}}$ consists of $\M_2\oplus \M_2 \oplus
\M_2/(\tau)$ plus bound terms of higher weight, 
where the first two generators are in weights $14$ and
$16$, and the latter is in weight $15$.  The only way this is consistent
with the Adams spectral sequence is if we
indeed
have the Adams differential $d_2([h_3g])=h_1^3h_4c_0$.

Continuing this kind of comparison between the two spectral sequences,
here is a list of what we find:
\begin{enumerate}[(1)]
\item There is a hidden $\tau$-extension between the element
$h_1[h_3g]$ and the element $d_0^2$.
\item The element $h_1^2[h_3g]$ survives the spectral sequence to
determine an element in $\pi_{29,18}(S^{\wedge}_H)$ which is killed by
$\tau^2$ but not killed by $\tau.$
\item Let $x$ be the generator in $\Ext^{8, (39,18)}_A(\M_2,\M_2)$.
Then in the motivic Adams spectral sequence we have
\[ d_2(x)=h_1^2d_0^2, \quad d_3(\tau x)=c_0Pd_0,\quad \text{and}\quad
d_4(\tau^2 x+h_0^7h_5)=P^2d_0.
\]
\end{enumerate}

\subsection{Another application to classical topology}
\label{se:top2}

To close this section we use our motivic techniques to once again
prove a result which is purely about classical algebraic topology.
Recall that in the classical stable homotopy groups of spheres, there
are unique classes in the 2-components of $\pi_8(S)$ and
$\pi_{14}(S)$ which are detected in the Adams spectral sequence by
$c_0$ and $d_0$, respectively.  These elements of the stable homotopy
groups go under the names $\epsilon$ and $\kappa$.  The product
$c_0d_0$ vanishes in the $E_2$-term of the classical Adams spectral
sequence, but this does not tell us that $\epsilon\kappa$ vanishes---it just
says that $\epsilon\kappa$ lives in higher Adams filtration.  We can
use the algebra of the motivic spectral sequences to show that
$\epsilon\kappa$ is in fact nonzero:

\begin{prop}
\label{pr:kappaepsilon}
In the classical stable homotopy groups of spheres, the product
$\epsilon\kappa$ is nonzero and is detected by $Pd_0$ in the Adams
spectral sequence.
\end{prop}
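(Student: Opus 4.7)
The plan is to lift $\epsilon$ and $\kappa$ to motivic classes, form their product there, and transfer the result back to classical homotopy using the hidden $\tau$-extension $\tau(c_0d_0)=Pd_0$ already established in the Adams/Adams--Novikov comparison above.

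First I would fix motivic lifts. Since $c_0$ and $d_0$ are permanent cycles in the motivic Adams spectral sequence, they detect classes $\tilde\epsilon\in\pi_{8,5}(S^\wedge_H)$ and $\tilde\kappa\in\pi_{14,8}(S^\wedge_H)$. The realization map $\psi\colon\pi_{*,*}(S^\wedge_H)\to\pi_*(S)^\wedge_2$ of Section~\ref{se:conv} is a ring map, and the induced map of $E_2$-terms sends motivic $c_0,d_0$ to their classical namesakes by Proposition~\ref{prop:compare-ext}. Consequently $\psi(\tilde\epsilon)=\epsilon$ and $\psi(\tilde\kappa)=\kappa$.

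Next I would multiply. The product $\tilde\epsilon\tilde\kappa\in\pi_{22,13}(S^\wedge_H)$ is detected on the motivic $E_\infty$-page by $c_0d_0$, a $\tau$-torsion (``red'') generator which we showed survives the motivic Adams spectral sequence. By the hidden multiplicative extension
\[
\tau\cdot(\tilde\epsilon\tilde\kappa) \;=\; (\text{class detected by } Pd_0),
\]
deduced earlier from comparison with the motivic Adams--Novikov spectral sequence, the class $\tau\tilde\epsilon\tilde\kappa$ is detected by $Pd_0$ in the motivic Adams spectral sequence.

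Finally I would push forward. The element $\tau\in\M_2$ realizes to $1$ under topological realization (Section~\ref{subsctn:compare-algebra}), so, because $\psi$ is a ring map,
\[
\epsilon\kappa \;=\; \psi(\tilde\epsilon)\psi(\tilde\kappa) \;=\; \psi(\tilde\epsilon\tilde\kappa) \;=\; \psi(\tau\cdot\tilde\epsilon\tilde\kappa).
\]
The right-hand side is detected in the classical Adams $E_\infty$-page by the image of motivic $Pd_0$, which is classical $Pd_0$ (again by Proposition~\ref{prop:compare-ext}, using that the Massey product definition $P=\langle h_3,h_0^4,-\rangle$ is compatible with the comparison map of spectral sequences). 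Hence $\epsilon\kappa$ is detected by $Pd_0$ and, in particular, is nonzero.

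The only nontrivial input is the hidden $\tau$-extension itself, which has already been proved by counting unbound survivors in the $22$-stem of the motivic Adams and Adams--Novikov spectral sequences; everything else is a routine naturality argument for $\psi$. The ``main obstacle'' is really a bookkeeping one, namely verifying that the motivic and classical generators labelled $c_0$, $d_0$, and $Pd_0$ correspond under the comparison map, which is immediate from the identification $\Ext_A(\M_2,\M_2)\otimes_{\tilde\M_2}\tilde\M_2[\tau^{-1}]\cong \Ext_{A_{\cl}}(\F_2,\F_2)\otimes_{\F_2}\F_2[\tau,\tau^{-1}]$.
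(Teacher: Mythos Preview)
Your argument is correct and is essentially identical to the paper's own proof: lift $\epsilon$ and $\kappa$ to motivic classes detected by $c_0$ and $d_0$, use the hidden $\tau$-extension $\tau(c_0d_0)=Pd_0$ established from the Adams/Adams--Novikov comparison, and push forward along the ring map $\psi$ using $\psi(\tau)=1$. The only cosmetic difference is notation ($\tilde\epsilon,\tilde\kappa$ versus the paper's $\epsilon_M,\kappa_M$) and that the paper notes the motivic lifts are \emph{unique} in their bidegrees, which makes the identification $\psi(\tilde\epsilon)=\epsilon$, $\psi(\tilde\kappa)=\kappa$ immediate.
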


\begin{proof}
There are unique elements $\epsilon_M$ and $\kappa_M$ in
$\pi_{8,5}(S^{\wedge}_H)$ and $\pi_{14,8}(S^{\wedge}_H)$ which are
detected by $c_0$ and $d_0$ in the motivic Adams spectral sequence.
It must be that $\psi(\epsilon_M)=\epsilon$ and
$\psi(\kappa_M)=\kappa$, where $\psi\colon \pi_{p,q}(S^{\wedge}_H)\ra
\pi_p(S)^{\wedge}_2$ is our map from the end of
Section~\ref{se:compare} (because $\psi(\epsilon_M)$ is detected
by $c_0$ in the classical Adams spectral sequence, etc.)  

The product $\epsilon_M\kappa_M$ is detected by $c_0d_0$, which is {\it
nonzero\/} in the motivic Adams spectral sequence.  We discovered in
the last section that there is a hidden $\tau$-extension in
$\pi_{22,*}(S^{\wedge}_H)$,
telling us that $\tau\cdot(\epsilon_M\kappa_M)$ gives the generator in
$\pi_{22,12}(S^{\wedge}_H)$.  But this generator is detected by
$Pd_0$, and so the same remains true upon applying $\psi$.  That is,
$\psi(\tau\cdot \epsilon_M\kappa_M)$ is nonzero and detected by $Pd_0$.
Recall that $\psi$ is a map of ring spectra, and $\psi(\tau)=1$.  So
$\psi(\tau\cdot \epsilon_M\kappa_M)=\epsilon\kappa$, and we are done.
\end{proof}

Again, Proposition~\ref{pr:kappaepsilon} is a known result in
algebraic topology.  It is implicit, for instance, in the homotopy
groups of $\tmf$ as given in
\cite[Prop. 7.2(5)]{Ba} 
(this was explained to us by Mark Behrens).  Still, our motivic proof
is very simple, only depending on the algebraic computation of the
motivic Adams and Adams-Novikov $E_2$-terms and a comparison of
differentials.  



\appendix

\begin{landscape}

\section{The $E_2$-term of the motivic Adams spectral sequence}
\label{se:motivicext}

\setlength{\unitlength}{0.58cm}
\begin{picture}(35,20)(-0.5,-1)

\thicklines

\put(-1,-1){\line(1,0){35}}
\put(-1,-1){\line(0,1){19}}

\put(2.5,15){\bc}
\put(3,14.8){$\M_2$}
\put(2.5,13){\rc}
\put(3,12.8){$\M_2/\tau$}

\put(-0.1,-1.5){0}
\put(1.9,-1.5){2}
\put(3.9,-1.5){4}
\put(5.9,-1.5){6}
\put(7.9,-1.5){8}
\put(9.7,-1.5){10}
\put(11.7,-1.5){12}
\put(13.7,-1.5){14}
\put(15.7,-1.5){16}
\put(17.7,-1.5){18}
\put(19.7,-1.5){20}
\put(21.7,-1.5){22}
\put(23.7,-1.5){24}
\put(25.7,-1.5){26}
\put(27.7,-1.5){28}
\put(29.7,-1.5){30}
\put(31.7,-1.5){32}
\put(33.7,-1.5){34}

\put(-1.5,0){0}
\put(-1.5,2){2}
\put(-1.5,4){4}
\put(-1.5,6){6}
\put(-1.5,8){8}
\put(-1.7,10){10}
\put(-1.7,12){12}
\put(-1.7,14){14}
\put(-1.7,16){16}
\put(-1.7,18){18}

\multiput(0,-1)(2,0){18}{\color[rgb]{0.5,0.5,0.5}\line(0,1){18}}
\multiput(-1,0)(0,2){9}{\color[rgb]{0.5,0.5,0.5}\line(1,0){35}}

\put(0,0){\bc}
\put(0,0){\vl}
\put(0,1){\bc}
\put(-0.6,1.1){\tiny $h_0$}
\put(0.1,1.2){\tiny 0}
\put(0,1){\vl}
\put(0,2){\bc}
\put(0,2){\vl}
\put(0,3){\bc}
\put(0,3){\vector(0,1){1}}

\put(0,0){\dl}
\put(0,0){\htwo}
\put(0,1){\htwo}
\put(0,2){\htwovar}

\put(1,1){\bc}
\put(1,0.6){\tiny $h_1$}
\put(1.2,0.9){\tiny 1}
\put(1,1){\dl}

\put(2,2){\bc}
\put(2,2){\dl}

\put(3,1){\bc}
\put(2.3,1.1){\tiny $h_2$}
\put(3,0.6){\tiny 2}
\put(3,1){\vl}
\put(3,1){\htwo}
\put(3,2){\bc}
\put(3,2){\vlvar}
\put(3,3){\bc}
\put(3,3){\rdl}

\put(4,4){\rc}
\put(4,4){\dltower}

\put(6,2){\bc}
\put(6,2){\htwo}

\put(7,1){\bc}
\put(6.3,1){\tiny $h_3$}
\put(7.3,1){\tiny 4}
\put(7,1){\vl}
\put(7,1){\dl}
\put(7,2){\bc}
\put(7,2){\vl}
\put(7,3){\bc}
\put(7,3){\vl}
\put(7,4){\bc}

\put(8,2){\bc}
\put(8,2){\dl}
\put(8,3){\bc}
\put(7.4,3){\tiny $c_0$}
\put(8.3,3){\tiny 5}
\put(8,3){\dl}

\put(9,3){\bc}
\put(9,4){\bc}
\put(9,4){\dltower}
\put(9,5){\bc}
\put(8,5.1){\tiny $Ph_1$}
\put(9,5){\dl}

\put(10,6){\bc}
\put(10,6){\dl}

\put(11,5){\bc}
\put(10.1,5.3){\tiny $Ph_2$}
\put(11,5){\vl}
\put(11,6){\bc}
\put(11,6){\vlvar}
\put(11,7){\bc}
\put(11,7){\dltower}

\put(11,5){\htwo}

\put(14,2){\bc}
\put(13.3,2){\tiny $h_3^2$}
\put(14,2){\vl}
\put(14,3){\bc}
\put(14,4){\bc}
\put(13.4,3.7){\tiny $d_0$}
\put(14.1,3.6){\tiny 8}
\put(14,4){\vl}
\put(14,4){\dl}
\put(14,5){\bc}
\put(14,5){\vl}
\put(14,6){\bc}

\put(14,4){\htwo}
\put(14,5){\htwo}
\put(14,6){\htwovar}

\put(15,1){\bc}
\put(14.4,0.7){\tiny $h_4$}
\put(15.1,0.6){\tiny 8}
\put(15,1){\vl}
\put(15,1){\dl}
\put(15,2){\bc}
\put(15,2){\vl}
\put(15,3){\bc}
\put(15,3){\vl}
\put(15,4){\bc}
\put(15,4){\vlr}
\put(15,5){\bc}
\put(15,5){\dl}
\put(15.25,5){\bc}
\put(15.25,5){\vll}
\put(15,6){\bc}
\put(15,6){\vl}
\put(15,7){\bc}
\put(15,7){\vl}
\put(15,8){\bc}

\put(15,1){\htwo}
\put(15,2){\htwo}
\put(15,3){\htwovar}

\put(16,2){\bc}
\put(16,2){\dl}
\put(16,6){\bc}
\put(16,6){\dl}
\put(16,7){\bc}
\put(15.3,7.3){\tiny $P c_0$}
\put(16,7){\dl}

\put(17,3){\bc}
\put(17,3){\dl}
\put(17,4){\bc}
\put(16.4,4.2){\tiny $e_0$}
\put(16.4,3.8){\tiny 10}
\put(17,4){\vl}
\put(17,4){\dl}
\put(17,5){\bc}
\put(17,5){\vl}
\put(17,6){\bc}
\put(17,6){\vlvar}
\put(17,7){\bc}
\put(17,7){\dltower}
\put(17,8){\bc}
\put(17,8){\dltower}
\put(17,9){\bc}
\put(16.1,9.3){\tiny $P^2 h_1$}
\put(17,9){\dl}

\put(17,4){\htwo}
\put(17,5){\htwo}

\put(18,2){\bc}
\put(18,2){\vl}
\put(18,3){\bc}
\put(18,3){\vlvar}
\put(18,4){\bc}
\put(18.25,4){\bc}
\put(18.25,4){\vllvar}
\put(18.4,4.25){\tiny $f_0$}
\put(18.3,3.6){\tiny 10}
\put(18,5){\bc}
\put(18,5){\dltower}
\put(18,10){\bc}
\put(18,10){\dl}

\put(18,2){\htwo}
\put(18.25,4){\color[rgb]{0,0.7,0} \line(3,1){2.75}}

\put(19,3){\bc}
\put(18.4,2.9){\tiny $c_1$}
\put(19.2,2.7){\tiny 11}
\put(19,9){\bc}
\put(17.9,9.3){\tiny $P^2 h_2$}
\put(19,9){\vl}
\put(19,10){\bc}
\put(19,10){\vlvar}
\put(19,11){\bc}
\put(19,11){\dltower}

\put(19,3){\htwo}
\put(19,9){\htwo}

\put(20,4){\bc}
\put(19.1,3.6){\tiny $[\tau g]$}
\put(20.2,3.8){\tiny 11}
\put(20,4){\vlvar}
\put(20,4){\dl}
\put(20,5){\bc}
\put(20,5){\vl}
\put(20,6){\bc}

\put(20,4){\htwovar}
\put(20,5){\htwo}

\put(21,3){\bc}
\put(21,5){\bc}

\put(22,4){\bc}
\put(22,7){\rc}
\put(22,7){\rdl}
\put(21,6.7){\tiny $c_0 d_0$}
\put(22,8){\bc}
\put(22,8){\vl}
\put(21.3,7.6){\tiny $P d_0$}
\put(22,8){\dl}
\put(22,8){\htwo}
\put(22,9){\bc}
\put(22,9){\vl}
\put(22,9){\htwo}
\put(22,10){\bc}
\put(22,10){\htwovar}

\put(23,4){\bc}
\put(22.3,3.6){\tiny $h_4 c_0$}
\put(23,4){\dl}
\put(23,5){\bc}
\put(23,5){\vl}
\put(23,5){\htwo}
\put(21.9,5.2){\tiny $[h_2 g]$}
\put(23,4.6){\tiny 14}
\put(23,6){\bc}
\put(23,6){\htwo}
\put(23,7){\bc}
\put(22.6,6.8){\tiny $i$}
\put(23.2,6.8){\tiny 12}
\put(23,7){\vlr}
\put(23,7){\htwo}
\put(23,8){\rc}
\put(23,8){\rdl}
\put(23.25,8){\bc}
\put(23.25,8){\vl}
\put(23.25,8){\htwoshortvar}
\put(23,9){\bc}
\put(23,9){\dl}
\put(23.25,9){\bc}
\put(23.25,9){\vll}
\put(23,10){\bc}
\put(23,10){\vl}
\put(23,11){\bc}
\put(23,11){\vl}
\put(23,12){\bc}

\put(24,5){\bc}
\put(24,5){\rdl}
\put(24,9){\rc}
\put(24,9){\dltower}
\put(24,10){\bc}
\put(24,10){\dl}
\put(24,11){\bc}
\put(23.1,11.3){\tiny $P^2 c_0$}
\put(24,11){\dl}

\put(25,6){\rc}
\put(25,6){\rdl}
\put(25,7){\rc}
\put(24,6.8){\tiny $c_0 e_0$}
\put(25,7){\dltower}
\put(25,8){\bc}
\put(24,8){\tiny $P e_0$}
\put(25,8){\htwo}
\put(25,8){\vl}
\put(25,8){\dl}
\put(25,9){\bc}
\put(25,9){\vl}
\put(25,9){\htwo}
\put(25,10){\bc}
\put(25,10){\vlvar}
\put(25,11){\bc}
\put(25,11){\dltower}
\put(25,12){\bc}
\put(25,12){\dltower}
\put(25,13){\bc}
\put(24,13.3){\tiny $P^3 h_1$}
\put(25,13){\dl}

\put(26,6){\bc}
\put(26,6){\vl}
\put(26,6){\htwo}
\put(26,7){\rc}
\put(26.25,7){\bc}
\put(26.25,7){\vll}
\put(26.25,7.3){\tiny $j$}
\put(26.4,6.8){\tiny 14}
\put(26.25,7){\htwoshort}
\put(26,8){\bc}
\put(26,8){\vlvar}
\put(26,8){\htwo}
\put(26,9){\bc}
\put(26,9){\dltower}
\put(26,14){\bc}
\put(26,14){\dl}

\put(27,5){\rc}
\put(26.1,4.4){\tiny $[h_3 g]$}
\put(27.1,4.7){\tiny 16}
\put(27,5){\rdl}
\put(27,13){\bc}
\put(25.9,13.3){\tiny $P^3 h_2$}
\put(27,13){\vl}
\put(27,14){\bc}
\put(27,14){\vlvar}
\put(27,15){\bc}
\put(27,15){\dltower}

\put(28,6){\rc}
\put(28,6){\rdl}
\put(28,8){\bc}
\put(27.2,7.8){\tiny $d_0^2$}
\put(28.2,7.8){\tiny 16}
\put(28,8){\vl}
\put(28,8){\dl}
\put(28,8){\htwo}
\put(28,9){\bc}
\put(28,9){\vl}
\put(28,9){\htwo}
\put(28,10){\bc}

\put(29,7){\rc}
\put(29.25,7){\bc}
\put(29.25,7.2){\tiny $k$}
\put(29,6.5){\tiny 16}
\put(29.25,7){\vll}
\put(29.25,7){\htwoshort}
\put(29,8){\bc}
\put(29,8){\vlvar}
\put(29,8){\htwo}
\put(29,9){\bc}
\put(29,9){\dltower}

\put(30,2){\bc}
\put(29.3,2){\tiny $h_4^2$}
\put(30,2){\vl}
\put(30,2){\dl}
\put(30,3){\bc}
\put(30,3){\vl}
\put(30,4){\bc}
\put(30,4){\vl}
\put(30,5){\bc}
\put(30,6){\bc}
\put(29.6,6){\tiny $r$}
\put(30.1,6.2){\tiny 16}
\put(30,6){\vl}
\put(30,7){\bc}
\put(30,7){\vl}
\put(30,8){\bc}
\put(30,8){\vl}
\put(30,9){\bc}
\put(30,9){\vl}
\put(30,10){\bc}
\put(30,10){\vlvar}
\put(30,11){\bc}
\put(28.6,11){\tiny $c_0 Pd_0$}
\put(30.1,10.6){\tiny 17}
\put(30,11){\dltower}
\put(30,12){\bc}
\put(28.9,11.7){\tiny $P^2 d_0$}
\put(30,12){\vl}
\put(30,12){\dl}
\put(30,12){\htwo}
\put(30,13){\bc}
\put(30,13){\vl}
\put(30,13){\htwo}
\put(30,14){\bc}
\put(30,14){\htwovar}

\put(31,1){\bc}
\put(30.3,0.8){\tiny $h_5$}
\put(31.1,0.6){\tiny 16}
\put(31,1){\dl}
\put(31,1){\vl}
\put(31,1){\htwo}
\put(31,2){\bc}
\put(31,2){\vlr}
\put(31,2){\htwo}
\put(31,3){\bc}
\put(31.25,3){\bc}
\put(31.25,3){\vll}
\put(31.25,3){\htwoshortvar}
\put(31,4){\bc}
\put(31,4){\vl}
\put(31,5){\bc}
\put(31,5){\vl}
\put(31.25,5){\bc}
\put(31.25,5.2){\tiny $n$}
\put(31.4,4.7){\tiny 17}
\put(31.25,5){\htwo}
\put(31,6){\bc}
\put(31,6){\vl}
\put(31,7){\bc}
\put(31,7){\vlr}
\put(31,8){\bc}
\put(30.4,8.1){\tiny 18}
\put(31,8){\vl}
\put(31,8){\dl}
\put(31,8){\htwo}
\put(31,9){\bc}
\put(31,9){\vl}
\put(31,9){\htwo}
\put(31,10){\bc}
\put(31.25,8){\bc}
\put(31.25,8){\vl}
\put(31.25,9){\bc}
\put(31.25,9){\vl}
\put(31.25,10){\bc}
\put(31.25,10){\vll}
\put(31,11){\bc}
\put(31,11){\vl}
\put(31,12){\bc}
\put(31,12){\vlr}
\put(31,13){\bc}
\put(31,13){\dl}
\put(31.25,13){\bc}
\put(31.25,13){\vll}
\put(31,14){\bc}
\put(31,14){\vl}
\put(31,15){\bc}
\put(31,15){\vl}
\put(31,16){\bc}

\put(32,2){\bc}
\put(32,2){\dl}
\put(32,4){\bc}
\put(31.4,4){\tiny $d_1$}
\put(32.1,3.7){\tiny 18}
\put(32,4){\dl}
\put(32,6){\bc}
\put(31.5,6){\tiny $q$}
\put(31.9,5.5){\tiny 17}
\put(32,6){\dl}
\put(32,7){\bc}
\put(31.6,7){\tiny $l$}
\put(31.9,6.5){\tiny 18}
\put(32,7){\vl}
\put(32,8){\bc}
\put(32,8){\vlvar}
\put(32,9){\bc}
\put(32,9){\dltower}
\put(32,14){\bc}
\put(32,14){\dl}
\put(32,15){\bc}
\put(31.2,15.4){\tiny $P^3 c_0$}
\put(32,15){\dl}

\put(33,3){\bc}
\put(33,3){\dl}
\put(33,4){\bc}
\put(32.6,4){\tiny $p$}
\put(33.1,3.8){\tiny 18}
\put(33,4){\vlvar}
\put(33,5){\bc}
\put(33,5){\rdlr}
\put(33,7){\bc}
\put(33,11){\rc}
\put(31.6,11.1){\tiny $e_0 P c_0$}
\put(33,11){\dltower}
\put(33,12){\bc}
\put(31.8,12){\tiny $P^2 e_0$}
\put(33,12){\vl}
\put(33,12){\dl}
\put(33,13){\bc}
\put(33,13){\vl}
\put(33,14){\bc}
\put(33,14){\vlvar}
\put(33,15){\bc}
\put(33,15){\dltower}
\put(33,16){\bc}
\put(33,16){\dltower}
\put(33,17){\bc}
\put(31.7,17.1){\tiny $P^4 h_1$}
\put(33,17){\dl}

\put(34,2){\bc}
\put(34,2){\vl}
\put(34,3){\bc}
\put(34,3){\vlvar}
\put(34,4){\bc}
\put(34,6){\bc}
\put(34.25,6){\rc}
\put(34,8){\bc}
\put(33.4,8){\tiny $e_0^2$}
\put(34.2,7.9){\tiny 20}
\put(34,8){\vl}
\put(34,9){\bc}
\put(34,9){\vl}
\put(34,10){\bc}
\put(34,11){\bc}
\put(33.7,10.5){\tiny $Pj$}
\put(34,11){\vl}
\put(34,12){\bc}
\put(34,12){\vlvar}
\put(34,13){\bc}
\put(34,18){\bc}









\end{picture}

\section{The motivic Adams-Novikov spectral sequence}
\label{se:AN}

\setlength{\unitlength}{0.58cm}
\begin{picture}(35,20)(-0.5,-1)

\thicklines
\put(-0.75,7){$E_2=E_3$}
\put(31,18.4){$E_6=E_\infty$}
\put(0.5,18.6){\bc}
\put(1,18.4){\small{$\F_2[\tau]$}}
\put(4,18.6){\bbox}
\put(4.5,18.4){\small{$\Z/n[\tau]$}}

\put(7.5,18.6){\rc}
\put(8,18.4){\small{$\F_2[\tau]/(\tau)$}}
\put(11.5,18.6){\smallrc}
\put(11.5,18.6){\bigrc}
\put(12,18.4){\small{$\F_2[\tau]/(\tau^2)$}}

\put(-1,-1){\line(1,0){35}}
\put(-1,-1){\line(0,1){9}}
\put(-1,9){\line(0,1){9}}
\put(-1,9){\line(1,0){35}}

\put(-0.1,-1.5){0}
\put(1.9,-1.5){2}
\put(3.9,-1.5){4}
\put(5.9,-1.5){6}
\put(7.9,-1.5){8}
\put(9.7,-1.5){10}
\put(11.7,-1.5){12}
\put(13.7,-1.5){14}
\put(15.7,-1.5){16}
\put(17.7,-1.5){18}
\put(19.7,-1.5){20}
\put(21.7,-1.5){22}
\put(23.7,-1.5){24}
\put(25.7,-1.5){26}
\put(27.7,-1.5){28}
\put(29.7,-1.5){30}
\put(31.7,-1.5){32}
\put(33.7,-1.5){34}

\put(-1.5,-0.25){0}
\put(-1.5,1.75){2}
\put(-1.5,3.75){4}
\put(-1.5,5.75){6}
\put(-1.5,7.75){8}
\put(-1.7,9.75){0}
\put(-1.7,11.75){2}
\put(-1.7,13.75){4}
\put(-1.7,15.75){6}
\put(-1.7,17.75){8}

\multiput(0,-1)(2,0){18}{\color[rgb]{0.5,0.5,0.5}\line(0,1){9}}
\multiput(0,9)(2,0){18}{\color[rgb]{0.5,0.5,0.5}\line(0,1){9}}
\multiput(-1,0)(0,2){10}{\color[rgb]{0.5,0.5,0.5}\line(1,0){35}}

\put(0,0){\bbox}
\put(0,0){\dl}
\put(1,1){\bc}

\multiput(3,1)(4,0){8}{\bbox}
\multiput(1,1)(4,0){9}{\bc}
\multiput(5,1)(4,0){8}{\da}
\multiput(7,1)(4,0){7}{\da}

\multiput(1,1)(1,1){5}{\bc}
\multiput(1,1)(1,1){4}{\dl}

\put(5,5){\da}

\put(6,2){\bc}
\put(6,2){\htwo}
\put(5.0,2.4){\tiny{$\beta_{2/2}$}}
\put(8,2){\bc}
\put(8,2){\dl}
\put(8.2,1.7){\tiny{$\beta_2$}}
\put(9,3){\bc}
\put(14,2){\bc}
\put(14,2.3){\bc}
\put(14,2){\dl}
\put(14,2){\htwoshort}
\put(14.2,1.7){\tiny{$\beta_3$}}
\put(15,3){\bc}

\put(16,2){\bc}
\put(16,2){\dl}
\put(16.6,3){\bc}
\put(16.6,3){\htwo}
\put(17,3){\bc}
\put(17,3){\dl}
\put(18,4){\bc}
\put(18,2){\bbox}
\put(18,2){\htwoshort}
\put(19,3){\bc}
\put(19,3){\htwoshort}
\put(20,2){\bc}
\put(20,2){\dl}
\put(20,4){\bbox}
\put(20,4){\htwo}
\put(20.6,2.9){\bc}
\put(21,3){\bc}
\put(21,3){\dl}
\put(21.6,3.9){\bc}
\put(22,4){\bc}
\put(22,4){\dl}
\put(23,5){\bbox}
\put(23,5){\htwo}

\put(23,3){\bc}
\put(23,3){\dl}
\put(24,4){\bc}
\put(24,4){\dl}
\put(25,5){\bc}
\put(26,2){\bc}
\put(26,6){\bc}
\put(26,6){\htwo}
\put(28,6){\bc}
\put(28,6){\dl}
\put(29,7){\bc}
\put(30,2){\bc}
\put(30,2){\dl}
\put(30,2.3){\bc}
\put(31,3){\bc}
\put(31,3.3){\bc}
\put(31,3.3){\htwo}
\put(32,2){\bc}
\put(32,2){\dl}
\put(32,2.4){\bc}
\put(32,2.4){\dl}
\put(32,4){\bc}
\put(33,3){\bc}
\put(33,3){\dl}
\put(33,3.4){\bc}
\put(33,2.6){\bc}
\put(33,5){\bc}
\put(32,4){\dl}
\put(33,5){\dl}
\put(34,2){\bbox}
\put(34,4.4){\bc}
\put(34,4){\bc}
\put(34,6){\bc}
\put(34,6.4){\bc}

\put(5,1){\dthree}
\multiput(13,1)(8,0){3}{\tinydthree}
\multiput(11,1)(8,0){3}{\tinydthree}
\put(30,2.6){\dfive}
\put(26,2){\dthree}
\put(34,2){\dthree}

\put(1,0.5){\tiny{$\alpha_1$}}
\put(5,0.5){\tiny{$\alpha_3$}}
\put(9,0.5){\tiny{$\alpha_5$}}
\put(13,0.5){\tiny{$\alpha_7$}}
\put(3,0.5){\tiny{$\alpha_2$}}
\put(7,0.5){\tiny{$\alpha_4$}}
\put(11,0.5){\tiny{$\alpha_6$}}
\put(15,0.5){\tiny{$\alpha_8$}}
\put(23,2.5){\tiny{$\eta_{3/2}$}}
\put(31.2,4.3){\tiny{$x_{32}$}}
\put(25.3,2.2){\tiny{$\beta_5$}}
\put(28.9,2.5){\tiny{$\beta_{6/2}$}}
\put(27.1,5.5){\tiny{$P\beta_2$}}


\put(0,10){\bbox}
\multiput(1,11)(1,1){3}{\bc}
\multiput(0,10)(1,1){3}{\dl}
\multiput(4,14)(1,1){2}{\rc}
\put(5,15){\dltower}
\multiput(3,13)(1,1){2}{\rdl}

\multiput(3,11)(8,0){2}{\bbox}
\put(7.4,11){\bbox}
\multiput(19,11)(4,0){3}{\bbox}
\put(31.4,11){\bbox}
\put(15.4,11){\bbox}

\put(6,12){\bc}
\put(6,12){\htwo}
\multiput(7.4,11)(1,1){2}{\dl}
\multiput(8.4,12)(1,1){2}{\bc}
\put(9.4,13){\dltower}
\multiput(9,11)(1,1){3}{\bc}
\multiput(9,11)(1,1){2}{\dl}
\put(11,13){\dltower}
\put(8,12){\bc}
\put(8,12){\dl}
\put(9,13){\bc}
\put(14,12){\bc}
\put(14,12.4){\bc}
\put(15,13){\bc}
\put(14,12){\dl}

\multiput(15.4,11)(1,1){2}{\dl}
\multiput(16.4,12)(1,1){2}{\bc}
\put(17.4,13){\dltower}
\multiput(23,11)(1,1){2}{\dl}
\multiput(24,12)(1,1){2}{\bc}
\put(25,13){\dltower}

\multiput(31.4,11)(1,1){2}{\dl}
\multiput(32.4,12)(1,1){2}{\bc}
\put(33.4,13){\dltower}

\multiput(17.4,11)(1,1){3}{\bc}
\multiput(17.4,11)(1,1){2}{\dl}
\put(19.4,13){\dltower}
\multiput(25,11)(1,1){3}{\bc}
\multiput(25,11)(1,1){2}{\dl}
\put(27,13){\dltower}

\put(14,12.05){\htwodshort}

\put(16,12){\bc}
\put(16,12){\dl}
\put(16.6,13){\bc}
\put(16.8,13.1){\htwo}
\put(17,13){\bc}
\put(17,13){\dl}
\put(18,14){\bc}
\put(18,12){\bbox}
\put(18,12.1){\htwodshort}
\put(19,13){\bc}
\put(19,13.1){\htwodshort}
\put(20,12){\bc}
\put(20,12){\dl}
\put(20,14){\bbox}
\put(20,14){\htwo}
\put(20.6,12.9){\bc}
\put(21,13){\bc}
\put(21,13){\dl}
\put(21.6,13.9){\bc}
\put(22,14){\bc}
\put(22,14){\dl}
\put(23,15){\bbox}
\put(23,15){\htwo}

\put(23,13){\bc}
\put(23,13){\dl}
\put(24,14){\bc}
\put(24,14){\rdl}
\put(25,15){\rc}
\put(26,12){\bc}
\put(26,16){\bc}
\put(26,16){\htwo}
\put(28,16){\bc}
\put(28,16){\rdl}
\put(29,17){\smallrc}
\put(29,17){\bigrc}
\put(30,12){\bc}
\put(30,12){\dl}
\put(31,13){\bc}
\put(31,13.4){\bc}
\put(31,13.4){\htwo}
\put(32,12){\bc}
\put(32,12){\dl}
\put(32,12.4){\bc}
\put(32,12.4){\dl}
\put(32,14){\bc}
\put(33,13){\bc}
\put(33,13){\dl}
\put(33,13.4){\bc}
\put(33.4,12.6){\bc}
\put(33,15){\rc}
\put(32,14){\rdl}
\put(33,15){\rdl}
\put(34,12){\bc}
\put(34,14.4){\bc}
\put(34,14){\bc}
\put(34,16){\rc}
\put(34,16.4){\bc}

\end{picture}

\section{The $E_4$-term of the motivic May spectral sequence}
\label{fig:mayss}

\par\noindent

\setlength{\unitlength}{0.8cm}
\begin{picture}(21,17)(-0.5,-1)

\thicklines

\put(-1,-1){\line(1,0){21}}
\put(-1,-1){\line(0,1){13}}

\put(-0.1,-1.5){0}
\put(1.9,-1.5){2}
\put(3.9,-1.5){4}
\put(5.9,-1.5){6}
\put(7.9,-1.5){8}
\put(9.7,-1.5){10}
\put(11.7,-1.5){12}
\put(13.7,-1.5){14}
\put(15.7,-1.5){16}
\put(17.7,-1.5){18}
\put(19.7,-1.5){20}

\put(-1.5,0){0}
\put(-1.5,2){2}
\put(-1.5,4){4}
\put(-1.5,6){6}
\put(-1.5,8){8}
\put(-1.7,10){10}
\put(-1.7,12){12}

\multiput(0,-1)(2,0){11}{\color[rgb]{0.5,0.5,0.5}\line(0,1){13}}
\multiput(-1,0)(0,2){7}{\color[rgb]{0.5,0.5,0.5}\line(1,0){21}}

\multiput(0,0)(0,1){4}{\bc}
\multiput(0,0)(0,1){3}{\vl}
\put(0,3){\vltower}
\multiput(1,1)(1,1){3}{\bc}
\multiput(0,0)(1,1){3}{\dl}
\put(0,0){\htwo}
\put(0,1){\htwo}
\put(0,2){\htwovar}
\put(3,3){\dltower}
\put(1.2,0.8){\tiny $h_1$}
\put(-0.7,0.8){\tiny $h_0$}

\multiput(3,1)(0,1){2}{\bc}
\put(3,1){\vl}
\put(3,1){\htwo}
\put(3,2){\vlvar}
\put(3,0.5){\tiny $h_2$}
\put(6,2){\bc}
\put(6,2){\htwo}

\multiput(7,1)(0,1){5}{\bc}
\put(7,0.5){\tiny $h_3$}
\multiput(7,1)(0,1){4}{\vl}
\put(7,5){\vltower}
\multiput(8,2)(1,1){2}{\bc}
\multiput(7,1)(1,1){2}{\dl}
\multiput(8,3)(1,1){2}{\bc}
\put(7.3,3){\tiny $c_0$}
\put(8,3){\dl}
\put(9,4){\dltower}

\multiput(8,4)(0,1){3}{\bc}
\put(7.3,3.5){\tiny $b_{20}^2$}
\put(8,4){\vl}
\put(8,5){\vl}
\put(8,4){\htwo}
\put(8,5){\htwo}
\put(8,6){\htwovar}
\put(8,6){\vltower}
\multiput(9,5)(1,1){3}{\bc}
\multiput(8,4)(1,1){3}{\dl}
\multiput(11,5)(0,1){2}{\bc}
\put(11,5){\vl}
\put(11,6){\vlvar}
\put(11,7){\dltower}

\multiput(13.7,2)(0,1){3}{\bc}
\put(13.4,1.4){\tiny $h_3^2$}
\put(13.7,2){\vl}
\put(13.7,3){\vl}
\put(13.7,4){\vltower}
\multiput(14,4)(0,1){3}{\bc}
\put(14.1,3.7){\tiny $d_0$}
\multiput(14,4)(1,1){4}{\bc}
\multiput(14,4)(1,1){3}{\dl}
\multiput(14,4)(0,1){2}{\vl}
\put(14,4){\htwo}
\put(14,5){\htwo}
\put(14,6){\htwovar}
\put(17,7){\dltower}

\multiput(15,1)(0,1){3}{\bc}
\put(15,0.5){\tiny $h_4$}
\multiput(15,1)(1,1){4}{\bc}
\put(15,1){\vl}
\put(15,2){\vl}
\put(15,3){\vltower}
\put(15,1){\htwo}
\put(15,2){\htwo}
\put(15,3){\htwovar}
\multiput(15,1)(1,1){3}{\dl}
\put(18,4){\dl}
\put(19,5){\rc}
\put(19,5){\dltower}
\multiput(18,2)(0,1){3}{\bc}
\put(18,2){\vl}
\put(18,3){\vlvar}

\put(14.7,3){\bc}
\put(14.7,3){\vltower}
\put(13.8,2.6){\tiny $h_2 b_{30}$}
\multiput(17,4)(0,1){3}{\bc}
\multiput(17,4)(0,1){2}{\vl}
\put(16.5,4.2){\tiny $e_0$}
\put(17,4){\htwo}
\put(17,5){\htwo}
\put(17,6){\vlvar}
\put(17,4){\dl}
\put(18,5){\bc}
\put(18,5){\dltower}

\put(16,7){\bc}
\put(14.7,7){\tiny $b_{20}^2 c_0$}
\put(16,7){\dl}
\put(17,8){\bc}
\put(17,8){\dltower}

\multiput(16,8)(0,1){3}{\bc}
\put(15.2,8.3){\tiny $b_{20}^4$}
\multiput(16,8)(1,1){4}{\bc}
\multiput(16,8)(1,1){3}{\dl}
\put(16,8){\vl}
\put(16,9){\vl}
\put(16,10){\vltower}
\put(16,8){\htwo}
\put(16,9){\htwo}
\put(16,10){\htwovar}

\multiput(19,9)(0,1){2}{\bc}
\put(19,9){\vl}
\put(19,10){\vlvar}
\put(19,11){\dltower}

\put(18.5,4){\bc}
\put(18.5,3.5){\tiny $f_0$}
\put(18.5,4){{\color[rgb]{0,0,1}\line(-1,2){0.5}}}
\put(19,3){\bc}
\put(19,2.5){\tiny $c_1$}

\put(20,4){\bc}
\put(19.6,3.6){\tiny $g$}
\put(20,4){\vl}
\put(20,5){\bc}
\put(20,5){\vl}
\put(20,6){\bc}

\put(8,4){\done}
\put(14.7,3){\done}
\put(20,4){\done}

\put(2.5,15.5){\bc}
\put(3,15.3){$\M_2$}
\put(2.5,13.5){\rc}
\put(3,13.3){$\M_2/\tau$}

\put(6.2,15.3){$c_0 = h_1 h_0(1)$}
\put(6.2,14.3){$d_0 = b_{20} b_{21} + h_1^2 b_{30}$}
\put(6.2,13.3){$e_0 = b_{21} h_0(1)$}

\put(12.2,15.3){$f_0 = h_2^2 b_{30}$}
\put(12.2,14.3){$c_1 = h_2 h_1(1)$}
\put(12.2,13.3){$g = b_{21}^2$}
\end{picture}

\end{landscape}


\bibliographystyle{amsalpha}

\end{document}